
\documentclass[
,1p,times,authoryear]{elsarticle}
\usepackage{amsmath}
\usepackage{amsthm}
\usepackage{amssymb}
\usepackage{amsfonts}

\usepackage[utf8]{inputenc}
\usepackage[T1]{fontenc}

\usepackage{xspace}
\usepackage[dvipsnames]{xcolor}
\usepackage{tikz}
\usepackage{graphicx}
\usepackage{mathtools}
\usepackage{cleveref}
\usepackage{nameref} 
\usepackage[ruled,linesnumbered]{algorithm2e}
\usepackage{geometry,pdflscape}

\newtheorem{theorem}{Theorem}
\newtheorem{lemma}[theorem]{Lemma}
\newtheorem{corollary}[theorem]{Corollary}

\newtheorem{definition}[theorem]{Definition}
\newtheorem{notation}[theorem]{Notation}
\newtheorem{example}[theorem]{Example}
\newtheorem{remark}[theorem]{Remark}

\newtheorem{intuition}[theorem]{Intuition}

\newcommand{\C}{\mathbb{C}} 
\newcommand{\R}{\mathbb{R}} 
 
\newcommand{\K}{\mathbb{K}} 
\newcommand{\N}{\mathbb{N}} 
 
\newcommand{\RLC}{\mathfrak{R}} 
\newcommand{\CLC}{\mathfrak{C}} 

\newcommand{\eg}{\emph{e.g.}\xspace}
\newcommand{\ie}{\emph{i.e.}\xspace}
 
\newcommand{\RP}{\mathbb{R}\mathbb{P}}
\newcommand{\CP}{\mathbb{C}\mathbb{P}}
\newcommand{\CsP}{\mathbb{C^*}\mathbb{P}}
\newcommand{\Smz}{\setminus \{0\}}

\newcommand{\PC}{\mathcal{P}} 
\newcommand{\LC}{\mathcal{L}} 
\newcommand{\CC}{\mathcal{C}} 
\newcommand{\bbar}{\; | \;}
\newcommand{\Ks}{\mathbb{K^*}}
\newcommand{\Rs}{\mathbb{R^*}}
\newcommand{\Cs}{\mathbb{C^*}}
\newcommand{\As}{{A^*}}

\newcommand{\Lim}{\mathbb{L}} 

\DeclareMathOperator{\hal}{\mathbf{hal}}
\DeclareMathOperator{\gal}{\mathbf{gal}}
\DeclareMathOperator{\inter}{\mathbf{int}}
\DeclareMathOperator{\sh}{{\mathbf{sh}}}
\newcommand*\conj[1]{\overline{#1}}
\newcommand{\norm}[1]{\left\lVert#1\right\rVert}
\newcommand{\I}{\mathbb{I}} 

\newcommand{\ii}{\mathrm{i}}
\newcommand{\II}{\mathrm{I}}
\newcommand{\JJ}{\mathrm{J}}

\newcommand{\KsP}{\mathbb{K^*}\mathbb{P}}

\newcommand{\KP}{\mathbb{K}\mathbb{P}}
\DeclareMathOperator{\psh}{\mathbf{psh}}
\DeclarePairedDelimiter\abs{\lvert}{\rvert}%

\makeatletter
    \def\page{p.\ }
    \def\Dx{{\Delta x}}
    
    \def\Dt{{\Delta t}}
    \def\Dv{{\Delta v}}

\makeatother

\DeclareMathOperator*{\argmax}{arg\,max}
 
\newcommand{\dd}{\, \mathrm d}

\newcommand{\dt}{\, \mathrm dt} 
\newcommand{\A}{\mathbb{A}} 
 
\newcommand{\V}{\mathbb{V}}

\newcommand{\jrguk}{Kortenkamp and Richter-Gebert\xspace}

\newcommand*{\fullref}[1]{\hyperref[{#1}]{\Cref*{#1} (\nameref*{#1})}} 

\newcommand{\join}{\textbf{join}} 
\newcommand{\meet}{\textbf{meet}}

\newcommand{\normsymb}{\norm{\cdot}}
\newcommand{\normalize}[1]{#1_{\normsymb}}

\newcommand\drawMcirc{
    \begin{tikzpicture}
    \draw (0,0) circle (1cm);
    \draw (1,0) circle (1cm);
    \draw[color=blue!80!black, line width = 0.5pt] (0.5,-2) -- (0.5,2);
    
    \fill[color=red] (0.5,0.866) circle (2pt);
    \fill[color=green] (0.5,-0.866) circle (2pt);
    \end{tikzpicture}
    \quad \quad
    \begin{tikzpicture}
    \draw (-1,0) circle (1cm);
    \draw (1,0) circle (1cm);
    \draw[color=blue!80!black,dashed,  line width = 0.5pt] (0,-2) -- (0,2);
    
    \fill[color=green] (0,0) circle (2pt); 
    \fill[color=red] (0,0) -- (180:2pt) arc (180:0:2pt) -- cycle; 
    \end{tikzpicture}
    \quad \quad
        \begin{tikzpicture}
    \draw (0,0) circle (1cm);
    \draw (2.5,0) circle (1cm);
    \draw[color=blue!80!black, line width = 0.5pt] (1.25,-2) -- (1.25,2);
    \end{tikzpicture}
}
\newcommand\stablecircles{
   \begin{tikzpicture}
\draw[color=DarkOrchid] (0,0) circle (1cm);
\draw[color=Emerald]  (1.5,0) circle (1cm);
\draw[color=blue!80!black, line width = 0.5pt] (0.75,-1.5) -- (0.75,1.5);

\fill[] (0,0) circle (1pt); 
\fill[] (1.5,0) circle (1pt); 
\draw[line width = 0.5pt] (-1.5,0) -- (3,0);

\fill[color=red] (0.75,0.66) circle (2pt);
\fill[color=green] (0.75,-0.66) circle (2pt);
\end{tikzpicture} 
\begin{tikzpicture}
\draw[] (0,0) circle (1cm);
\draw[color=blue!80!black, line width = 0.5pt,dashed] (0,-1.5) -- (0,1.5);

\fill[] (0,0) circle (1pt); 
\draw[line width = 0.5pt] (-1.5,0) -- (1.5,0);

\fill[color=red,opacity=0.5] (0,1) circle (2pt);
\fill[color=green,opacity=0.5] (0,-1) circle (2pt);
\end{tikzpicture}
   \begin{tikzpicture}
\draw[color=DarkOrchid] (1.5,0) circle (1cm);
\draw[color=Emerald]  (0,0) circle (1cm);
\draw[color=blue!80!black, line width = 0.5pt] (0.75,-1.5) -- (0.75,1.5);

\fill[] (0,0) circle (1pt); 
\fill[] (1.5,0) circle (1pt); 
\draw[line width = 0.5pt] (-1.5,0) -- (3,0);

\fill[color=red] (0.75,0.66) circle (2pt);
\fill[color=green] (0.75,-0.66) circle (2pt);
\end{tikzpicture} 
}
\newcommand\unstablecircles{
   \begin{tikzpicture}[baseline]
\draw[] (0,0) circle (1cm);
\draw[color=DarkOrchid] (0.1,-0.1) circle (1cm);
\draw[color=Emerald] (0,0) circle (1cm);
\draw[color=blue!80!black, line width = 0.5pt,shorten <=-0.3cm, shorten >=-0.3cm] (0.75,0.65) -- (-0.65,-0.75);


\fill[color=red] (0.75,0.65) circle (2pt);
\fill[color=green] (-0.65,-0.75) circle (2pt);
\end{tikzpicture} 
\quad
\quad
    \begin{tikzpicture}[baseline]
\draw[] (0,0) circle (1cm);
\draw[color=blue!80!black, line width = 0.5pt,dashed] (0,-1.3) -- (0,1.3);


\fill[color=red,opacity=0.5] (0,1) circle (2pt);
\fill[color=green,opacity=0.5] (0,-1) circle (2pt);
\end{tikzpicture}
\quad
\quad
\begin{tikzpicture}[baseline]
\draw[] (0,0) circle (1cm);
\draw[color=DarkOrchid] (0.1,0.1) circle (1cm);
\draw[color=Emerald] (0,0) circle (1cm);
\draw[color=blue!80!black, line width = 0.5pt,shorten <=-0.3cm, shorten >=-0.3cm] (-0.65,0.75) -- (0.75,-0.65);


\fill[color=red] (-0.65,0.75) circle (2pt);
\fill[color=green] (0.75,-0.65) circle (2pt);
\end{tikzpicture}
\quad
    \begin{tikzpicture}[baseline]
\draw[] (0,0) circle (1cm);
\draw[color=blue!80!black, line width = 0.5pt,dashed] (-1.3,0) -- (1.3,0);


\fill[color=green,opacity=0.5] (1,0) circle (2pt);
\fill[color=red,opacity=0.5] (-1,0) circle (2pt);
\end{tikzpicture}
}

\newcommand{\pathsetting}{
    Assume the setting of \Cref{sing_basics:ex_path}: let $\Psi :[0,1]  \rightarrow \C^d$ be a continuous path and fractionally continuable at $t_0 \in (0,1)$. Denote by $\Theta: B_\epsilon(t_0) \rightarrow \C^d \Smz$ the fractional continuation of $\Psi$ at $t_0$, $\lambda: B_\epsilon(t_0) \rightarrow \C$ the removable prefactor, $A \subset \{1, \ldots, d\}$ the set of all indices such that $\Psi$ is not constantly zero on a neighborhood of $t_0$ and $i \in A$ the selected component as defined in \Cref{sing_basics:prefac}.
}
\newcommand{\pathsettingwoprefactor}{
Assume the setting of \Cref{sing_basics:ex_path}: let $\Psi :[0,1]  \rightarrow \C^d$ be a continuous path and fractionally continuable at $t_0 \in (0,1)$. Denote by $\Theta: B_\epsilon(t_0) \rightarrow \C^d \Smz$ the fractional continuation of $\Psi$ at $t_0$, $A \subset \{1, \ldots, d\}$ the set of all indices such that $\Psi$ is not constantly zero on a neighborhood of $t_0$. and $i \in A$ the selected component as defined in \Cref{sing_basics:prefac}.
}
\newcommand{\pathsettingwocomp}{
    Assume the setting of \Cref{sing_basics:ex_path}: let $\Psi :[0,1]  \rightarrow \C^d$ be a continuous path and fractionally continuable at $t_0 \in (0,1)$. Denote by $\Theta: B_\epsilon(t_0) \rightarrow \C^d \Smz$ the fractional continuation of $\Psi$ at $t_0$, $\lambda: B_\epsilon(t_0) \rightarrow \C$ the removable prefactor, $A \subset \{1, \ldots, d\}$ the set of all indices such that $\Psi$ is not constantly zero on a neighborhood of $t_0$.
}

\crefname{algocf}{alg.}{algs.}
\Crefname{algocf}{Algorithm}{Algorithms}

\begin{document}

\begin{frontmatter}

\title{Non-standard Analysis in Dynamic Geometry}

\author{Michael Strobel}
\address{Technical University of Munich, Germany}
\ead{strobel@ma.tum.de}
\ead[url]{https://www-m10.ma.tum.de/bin/view/Lehrstuhl/MichaelStrobel}

\begin{abstract}
    We will present the benefits of using methods of non-standard analysis in dynamic projective geometry. One major application will be the desingulariazation of geometric constructions.
\end{abstract}

\begin{keyword}
dynamic projective geometry, non-standard analysis, removal of singularities
\end{keyword}
\end{frontmatter}

\section{Geometry and Infinity}

In this article we will integrate concepts of non-standard analysis (NSA) into dynamic projective geometry. One major application of the developed theory will be the automatic removal of singularities in geometric constructions.

Non-standard analysis allows the precise handling of infinitely small (infinitesimal) and infinitely large (unlimited) numbers which admits statements like: ``there is an $\epsilon > 0$ such that $\epsilon < r \; \forall r \in \R^+$'' or ``there is an $H$ such that $H > r \; \forall r \in \R^+$''. This obviously violates the Archimedean axiom that for every real number there is always a larger natural number. Therefore such fields that contain infinitesimal and unlimited numbers are also called ``non-Archimedean''. To embed this theory into the ``standard'' theory, one may use field extensions of $\R$ and $\C$. We will call them $\Rs$ (hyperreal numbers) and $\Cs$ (hypercomplex numbers), and one may seamlessly integrate these numbers into well-established (we will refer to it by the adjective ``standard'') analysis in the sense of Weierstrass. Although the construction of these fields is highly non-trivial, its mere application is very intuitive.

One strength of projective geometry is the natural and comprehensive integration of infinity. For example: consider the intersection of two (disjoint) lines. Euclidean geometry has to distinguish two cases: the lines intersect or they do not intersect (if they are parallel). In projective geometry there is \textit{always} a point of intersection, it just may be infinitely far way. 

In practice, projective geometry is usually carried out over $\R$ or $\C$, but its not restricted to these canonical fields. One may consider arbitrary fields for geometric operations and this will enable us to combine the mathematical branches of projective geometry and non-standard analysis.

Using infinitesimal elements in geometry is not a new concept, it is actually one of the oldest concepts. Already Leibniz and Newton used them for geometric reasoning.

Dynamic (projective) geometry allows us to describe the movement of geometric constructions and implement them on a computer. In dynamic constructions, singularities naturally arise since these may describe particularly interesting configurations. We will look into methods for resolving singularities and situations in which standard theory does not offer concise and consistent solutions. 
This will naturally lead to the integration of non-standard analysis into projective geometry. Using this extension of projective geometry, we will be able to develop the theory of proper desingulariazation. After expanding this theory we will also develop algorithms and a real time capable implementation to integrate the theory into a dynamic geometry system.

We will illustrate the theory with several basic examples, like circle-circle intersections or point-line configurations. The scope of the article is not to show advanced constructions, but rather to focus on geometric primitive operations. 
Advanced configurations are usually built up by primitive operations, hence if every construction step can be desingularized then the whole constructions will be non-singular. We hope that the reader already recognizes the capabilities of the presented methods with elementary examples.  

\section{Singularities of Geometric Constructions}
\label{chap:sing-geo}
Building up geometric constructions using primitive operations, for instance the connecting line of two points or the intersection of two circles, relies on certain non-degeneracy assumptions like the two points or circles being distinct. 
Even if primitive operations are implemented in a way that does not 
introduce artificial degenerate situations, combination of those can produce such situations.
Consider a geometric construction
that should create a certain element depending on some free elements. As an example consider the calculation of an orthogonal bisector of two points $A$ and $B$. In a dynamic geometry system a user may implement such a construction 
by applying a sequence of geometric primitive operations. One  possible (perhaps not the most clever) way to do this is by drawing two circles of radius $1$ around the points, intersecting them and then joining the two intersections. How should such a construction behave when the user moves the original points?
See \Cref{sing_basics:fig:ortho_bisec} for a visualization of the situation.

It is clear that the two real points of intersection and their connecting line should be shown when $A$ and $B$ are at a distance less then two. At a distance greater than two one could argue (and this is a modeling step)
what the desired behavior should be. We propose the following behavior: the intersection points have become complex. Joining them creates a line with complex homogeneous coordinates. However a common factor can be 
extracted from this complex coordinate vector and the line can be interpreted as a real line: The orthogonal bisector of $A$ and $B$. But what should happen if $A$ and $B$ are exactly in a situation 
where their distance equals $2$? In this case the two circle intersections coincide and we do not have a well defined connecting line. However, in a sense the situation behaves like a removable singularity. We can consider the situation as a limit process: in an epsilon neighborhood of the singularity the position of the line stably converges to the same situation. The question is how to automatically detect such cases and automatically \textit{desingularize} them. 
\begin{figure}
    \centering
    \resizebox{\textwidth}{!}{%
    \begin{tikzpicture}
    \draw (0,0) circle (1cm);
    \fill[] (0.0,0.0) circle (1pt);
    \fill[] (1.0,0.0) circle (1pt);
    \draw (1,0) circle (1cm);
    \draw (-1.8,0) -- (2.2,0);
    \draw[color=blue!80!black, line width = 0.5pt] (0.5,-2) -- (0.5,2);
    
    \fill[color=red] (0.5,0.866) circle (2pt);
    \fill[color=green] (0.5,-0.866) circle (2pt);
    \end{tikzpicture}
    \begin{tikzpicture}
    \draw (-1,0) circle (1cm);
    \fill[] (-1.0,0.0) circle (1pt);
    \draw (1,0) circle (1cm);
    \fill[] (1.0,0.0) circle (1pt);
    \draw (-2.2,0) -- (2.2,0);
    \draw[color=blue!80!black,dashed,  line width = 0.5pt] (0,-2) -- (0,2);
    
    \fill[color=green] (0,0) circle (2pt); 
    \fill[color=red] (0,0) -- (180:2pt) arc (180:0:2pt) -- cycle; 
    \end{tikzpicture}
    \begin{tikzpicture}
    \draw (0,0) circle (1cm);
    \fill[] (0.0,0.0) circle (1pt);
    \draw (2.5,0) circle (1cm);
    \fill[] (2.5,0.0) circle (1pt);
    \draw (-1.2,0) -- (3.8,0);
    \draw[color=blue!80!black, line width = 0.5pt] (1.25,-2) -- (1.25,2);
    \end{tikzpicture}
    }%
    \caption{Construction of an orthogonal bisector with removable singularity (middle).}
    \label{sing_basics:fig:ortho_bisec}
\end{figure}
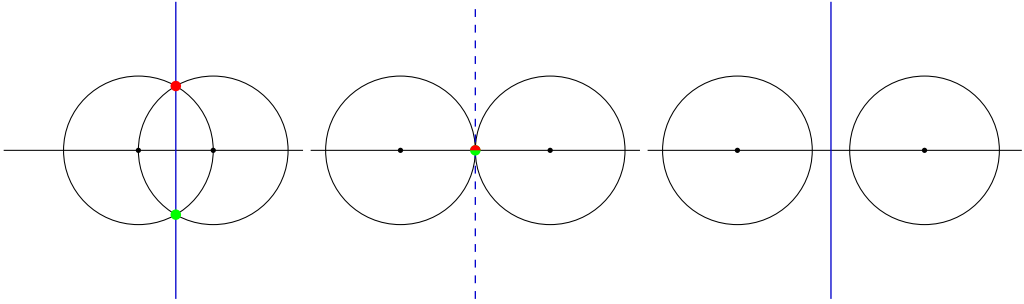
\subsection{Modelling Dynamic Geometry}
First, we have to define our model for the removal of singularities. \jrguk have already laid the foundations in their article ``\textsc{Grundlagen Dynamischer Geometrie}'' \cite{kortenkamp2001grundlagen}. 
In their article, Euclidean representations are used for geometric objects to obtain a classical notion of continuity. Here, we will define continuity in a topological sense and preserve the benefits of a projective space at the expense of the introduction of topological machinery.

A well known topological characteristic of topological continuity is the following:
\begin{lemma}[Topological Continuity]
    \label{sing_basics:conti_inout}
    Let $X,Y$ be two topological spaces and $X_{/ \sim}$ a quotient space of $X$ by an equivalence relation $\sim$ on $X$ with canonical quotient map $\pi: X \rightarrow X_{/ \sim}, x \mapsto [x]$. Then it holds true:
    \begin{enumerate}
        \item A function $f: X_{/ \sim} \rightarrow Y$ is continuous, if and only if $f \circ \pi: X \rightarrow Y$ is continuous.
        \item Let $\phi: Y \rightarrow X_{/ \sim}$ be a function. If there is a continuous function $\Phi: Y \rightarrow X$ with $\phi = \pi \circ \Phi$, then $\phi$ is continuous. 
    \end{enumerate} 
\end{lemma}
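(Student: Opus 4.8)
The plan is to appeal directly to the universal property of the quotient topology, which is exactly the content being restated here.

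For part (1), recall that the quotient topology on $X_{/\sim}$ is defined so that a set $U \subseteq X_{/\sim}$ is open precisely when $\pi^{-1}(U)$ is open in $X$. First I would prove the ``only if'' direction, which is immediate: if $f$ is continuous, then $f \circ \pi$ is a composition of continuous maps ($\pi$ being continuous by definition of the quotient topology), hence continuous. For the ``if'' direction, assume $f \circ \pi$ is continuous and let $V \subseteq Y$ be open. Then $(f \circ \pi)^{-1}(V) = \pi^{-1}(f^{-1}(V))$ is open in $X$. By the defining property of the quotient topology, this says exactly that $f^{-1}(V)$ is open in $X_{/\sim}$. Since $V$ was arbitrary, $f$ is continuous.

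For part (2), suppose $\Phi: Y \rightarrow X$ is continuous with $\phi = \pi \circ \Phi$. Then $\phi$ is a composition of continuous maps ($\Phi$ by hypothesis, $\pi$ by definition of the quotient topology), hence continuous. One small point to check is that the factorization $\phi = \pi \circ \Phi$ is well-posed as stated — but this is given as a hypothesis, so nothing needs verification; we only use it formally.

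**The main obstacle** is essentially that there is no obstacle: the statement is a standard consequence of the universal property of quotient spaces, and the only care required is to invoke the correct definition of the quotient topology (openness of $U$ in $X_{/\sim}$ iff openness of $\pi^{-1}(U)$ in $X$) rather than something weaker. If one wanted a self-contained treatment one could instead cite a standard reference (e.g.\ any point-set topology text), but since the identity $\pi^{-1}(f^{-1}(V)) = (f\circ\pi)^{-1}(V)$ does all the work, writing it out is cleaner than citing.
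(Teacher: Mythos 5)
Your proof is correct: part (1) is exactly the universal property of the quotient topology, carried out via the identity $(f\circ\pi)^{-1}(V)=\pi^{-1}(f^{-1}(V))$, and part (2) is just closure of continuity under composition, using that $\pi$ is continuous by definition of the quotient topology. The paper itself states this lemma as a well-known fact and gives no proof, so your argument simply supplies the standard justification the paper omits; there is nothing to object to.
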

%
\begin{remark}
    Especially the second point is important in our situation for us: if we can find a continuous function $f: \R \rightarrow \C^3 \setminus \{0\}$, then its counterpart $[f]$ in $\CP^2$ is automatically continuous. We will exploit this fact very often, since this induces a simple recipe to find continuous extension, we will call them $C^0$--continuations, of a function.
\end{remark}
\begin{remark}
Let $X$ be a topological space and $X_{/ \sim}$ be the quotient space defined by an equivalence relation $\sim$ on $X$. By abuse of notation, we will write $x= y$ for all $[x], [y] \in X_{/ \sim}$ if we mean $[x] = [y]$.
\end{remark}
The term ``singularity'' is used to describe a point of a function where the function is undefined or not well-behaved. We will encounter singularities in at least two variations: for functions $f: [0,1] \rightarrow \C^{d+1}$ for which the preimage of the zero vector is not the empty set (we will motivate this after the definition) and the more classical variant from complex analysis: if a fraction of two functions attains $0$ for nominator and the denominator at the same time. We will see that these notions are closely related for our purposes.

\begin{definition}[Singularity]
    Let $f: [0,1] \rightarrow \C^{d+1}$ be a continuous function and let $t_0 \in [0,1]$. We call $f$ \textit{singular at $\mathit{t_0}$} if $f(t_0) = 0$. And we call $t_0$ a \textit{singularity}.
\end{definition}
\begin{remark}
    This definition is motivated by the fact that the projection of the zero vector is not an element of the complex projective space $\CP^d$. While a function $f: [0,1] \rightarrow \C^{d+1}$ is well defined if $f(t_0) = 0$ the same function seen as a function in the projective space $[f]: [0,1] \rightarrow \CP^d, t \mapsto [f(t)]$ is not a well-defined function for $t_0$.
\end{remark}
\begin{definition}[Isolated Point]
  Let $I \subset [0,1]$ be a subset of the unit interval. We call a point $x \in I$ \textit{isolated} if $\not \exists \epsilon > 0: B_\epsilon(x) \subset I$.
\end{definition}
As mentioned before we want do develop techniques to remove such singularities if possible.
Therefore, we will now define a $C^0$-continuation which is a notion of a continuous extension of a function.
\begin{definition}[$C^0$-continutation]
    \label{sing_basics:topo_c0_conti}
    Let $I \subset [0,1]$ be a subset of the unit interval with no isolated points, $s_1, \ldots, s_n \in I$ and let $I':= I \setminus \{s_1, \ldots, s_n\}$. Let $\phi:  I' \rightarrow \CP^d$ be continuous. If there is a continuous function $\hat \phi: \hat I \rightarrow \CP^d$ with $I' \subsetneq \hat I \subset I$ and $\phi(t) = \hat \phi(t) \, \forall t \in I'$
    (note that ``$=$'' here means the same equivalence class), 
    then we call $\phi$ \textit{$\mathit{C^0}$--continuable on $\mathit{\hat I}$}
   and 
   $\hat \phi$ the \textit{$\mathit{C^0}$--continuation (or \textit{desingulariazation}) of $\mathit{\phi}$ on $\mathit{\hat I}$}. And all $s \in \hat I \setminus I'$ are called \textit{removable singularities}.
\end{definition}

%
\begin{remark}
   In other words: in the previous definition we consider a function $\phi$ which is singular at $t_0$. If we can find another function $\hat \phi$ which coincides with $\phi$ on the domain of $\phi$ but is also continuous at $t_0$, then we can continuously extend the function $\phi$ at $t_0$, with the value of $\hat \phi$ at $t_0$, and remove the singularity.
\end{remark}
\begin{definition}[Quasi Continuous]
\label{sing_basics:topo_quasi_conti}
Let $I \subset [0,1]$ be a subset of the unit interval $[0,1]$ with no isolated points and $I' := I \setminus \{ s_1, \ldots, s_n\}$ for $s_1, \ldots, s_n \in [0,1]$ ($n\in \N$). We will define quasi continuity for functions which map from an interval to the set of points $\PC$, lines $\LC$ and conics $\CC$ in $\CP^2$ respectively:
    \begin{itemize}
        \item $\phi: I' \rightarrow \PC$ is called \textit{quasi continuous}, if there exists a continuous function $\psi: I \rightarrow \PC$ such that $\phi(t) = \psi(t)$ for all $t \in I'$. 
        \item $\phi: I' \rightarrow \LC$ is called \textit{quasi continuous}, if there exists a continuous function $\psi: I \rightarrow \LC$ such that $\{ p \in \PC \bbar \langle p, \phi(t) \rangle = 0 \} = \{ p \in \PC \bbar \langle p, \psi(t) \rangle = 0 \}$ for all $t \in I'$.
        \item $\phi: I' \rightarrow \CC$ is called \textit{quasi continuous}, if there exists a continuous function $\psi: I \rightarrow \CC$ such that $\{ p \in \PC \bbar p^T A_{\phi(t)} p = 0 \} = \{ p \in \PC \bbar p^T A_{\psi(t)} p = 0 \}$ where $A_{\phi(t)}$ and $A_{\psi(t)}$ denote the associated matrices of $\phi(t)$ and $\psi(t)$  for all $t \in I'$.
    \end{itemize}
In each case we will call the function $\psi$ the \textit{quasi continuation}.
\end{definition}
\begin{remark}
    In essence, a quasi continuous function can be extended continuously on a larger domain. Furthermore quasi continuity here is a generalized version of the quasi continuity of \cite{kortenkamp2001grundlagen}.
\end{remark}
\begin{lemma}[Quasi Continuity and $C^0$-Continuations] 
Let $I \subset [0,1]$ be a subset of the unit interval $[0,1]$ with no isolated points and $I' := I \setminus \{ s_1, \ldots, s_n\}$ for $s_1, \ldots, s_n \in [0,1]$ ($n\in \N$). Let $\phi: I' \rightarrow \CP^d$ be a continuous function. $\phi(t)$ is quasi continuous if and only if for all $s \in I \setminus I'$ there is a $C^0$--continuation. 
\end{lemma}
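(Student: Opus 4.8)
The plan is to reduce the three cases of \Cref{sing_basics:topo_quasi_conti} to a single statement about maps into a projective space, and then prove the two implications separately; the forward one is essentially immediate, and the backward one is a gluing argument. First I would observe that via the identifications $\PC \cong \CP^2$, $\LC \cong \CP^2$ (dual coordinates) and $\CC \cong \CP^5$, in each case ``quasi continuous'' unwinds to: there is a continuous $\psi : I \to \CP^m$ with $\phi(t) = \psi(t)$ (as elements of the associated projective space) for all $t \in I'$ --- for lines and conics ``defines the same zero set'' is precisely ``equal as a point of the associated projective space''. So it suffices to treat a continuous $\phi : I' \to \CP^d$ and show that it extends to a continuous map on $I$ if and only if every $s \in I \setminus I'$ is a removable singularity in the sense of \Cref{sing_basics:topo_c0_conti} (i.e.\ admits a $C^0$--continuation whose domain contains $s$).

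Two preliminary topological facts I would record up front. (i) Since $I$ has no isolated points and only the finitely many points $s_1, \dots, s_n$ are deleted, the set $I'$ is dense in $I$: every neighbourhood of any $s_j$ meets $I$ in infinitely many points, hence meets $I'$; consequently $I'$ is dense in any subset $\hat I$ with $I' \subseteq \hat I \subseteq I$. (ii) Every such $\hat I$ is open in $I$, because its complement is contained in the finite --- hence closed --- set $\{s_1, \dots, s_n\}$. Fact (ii) applies in particular to $I'$ itself and to the domain of any $C^0$--continuation.

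For the forward direction, assume $\phi$ is quasi continuous with continuous extension $\psi : I \to \CP^d$. Given $s \in I \setminus I'$, set $\hat I := I' \cup \{s\}$ and $\hat\phi := \psi|_{\hat I}$. Then $\hat\phi$ is continuous (a restriction of a continuous map), $I' \subsetneq \hat I \subseteq I$, and $\hat\phi = \phi$ on $I'$, so $\hat\phi$ is a $C^0$--continuation of $\phi$ for which $s$ is a removable singularity.

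For the backward direction --- the part I expect to be the only real work --- suppose that for every $j$ there is a $C^0$--continuation $\hat\phi_j : \hat I_j \to \CP^d$ with $s_j \in \hat I_j$. I would define $\psi : I \to \CP^d$ by $\psi|_{I'} := \phi$ and $\psi(s_j) := \hat\phi_j(s_j)$. The crucial point is well-definedness and agreement on overlaps: whenever $s_k \in \hat I_j$, the maps $\hat\phi_j$ and $\hat\phi_k$ are both continuous on a common neighbourhood of $s_k$ containing $I'$ and both coincide with $\phi$ on $I'$, which is dense by Fact (i); since $\CP^d$ is Hausdorff, a continuous extension from a dense set is unique, so $\hat\phi_j(s_k) = \hat\phi_k(s_k) = \psi(s_k)$. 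Hence $\psi$ restricted to each $\hat I_j$ equals $\hat\phi_j$, and in particular is continuous there. Continuity of $\psi$ on all of $I$ then follows locally: on the open set $I'$ it equals the continuous map $\phi$, and each $s_j$ has, by Fact (ii), the open neighbourhood $\hat I_j$ on which $\psi = \hat\phi_j$ is continuous. Thus $\psi$ is a continuous extension of $\phi$ to $I$, i.e.\ $\phi$ is quasi continuous, which completes the equivalence. The only genuine subtlety is this consistency of the locally given continuations, which is exactly where density of $I'$ (from the ``no isolated points'' hypothesis) and the Hausdorff property of $\CP^d$ enter.
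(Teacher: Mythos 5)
Your proof is correct and follows essentially the same route as the paper: the forward direction by restricting the quasi continuation, and the backward direction by gluing the individual $C^0$--continuations into a single map on $I$. You merely spell out details the paper leaves implicit (density of $I'$, Hausdorffness of $\CP^d$ to get agreement on overlapping continuation domains, and openness of those domains in $I$ for local continuity), so the underlying argument is the same.
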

\begin{proof}
%

    If $\phi$ is quasi continuous then the claim is clear just by definition.

    Now let there be a $C^0$-continuation for all $s_i \in I \setminus I'$.
By the prerequisites, we know that for every $s_i$ there is a $C^0$--continuation $ \psi_i: I' \cup \{s_i\} \rightarrow \CP^d$ for $i \in \{1, \ldots, n\}$.
 
We can now ``glue'' the $C^0$--continuations $\psi_i$ to one continuous function on $I$: define $\psi(t): I \rightarrow \CP^d$ by
\[
    \psi(t):=
    \begin{cases}
        \phi(t),& \text{ if } t \in I'\\
         \psi_i(t), & \text{ if } t = s_i \text{ for } i \in \{1, \ldots, n \}
    \end{cases}
\]
By construction $\psi(t)$ coincides with $\phi(t)$ for all $t \in I'$. Furthermore by assumption $\psi(t)$ is continuous for all $t \in I$ since a $C^0$--continuation exists for all $t \in I \setminus I'$ and is given by $\psi_i$. But this means that $\phi$ is quasi continuous.
\end{proof}
\begin{example}[$C^0$-Continutation and removable Singularities]
    Let $I = [0,1]$ and $I':= I \setminus \{ \frac{1}{2} \}$. Define $\phi(t): I' \rightarrow \CP^2$ by 
    \[
        \phi(t):= \left[
            \begin{pmatrix}
                \sin\left(t- \frac{1}{2}\right) \\ t - \frac{1}{2} \\ t - \frac{1}{2}
        \end{pmatrix} 
    \right]
    \]
    First note that $\phi(t)$ is continuous on $I'$ by \Cref{sing_basics:conti_inout}. 
    
    Now define $\psi: I \rightarrow \C^3 \Smz$ with 
    \[
        \psi(t):= \begin{pmatrix}
            \frac{\sin\left(t- \frac{1}{2}\right)}{t - \frac{1}{2}} \\ 1 \\ 1
    \end{pmatrix} \text{ if } t \neq \frac{1}{2} \text{ and } \psi \left(\frac{1}{2}\right) := \begin{pmatrix}
        1 \\ 1 \\ 1
    \end{pmatrix}
    \]
    Obviously, $\psi$ is continuous on $I'$, but is it also continuous on $I$ by de L'Hospital's rule. Now define $\hat \phi := \pi \circ \psi = [\psi]$, then $\phi(t) = \hat \phi(t) \, \forall t \in I'$, which can easily be seen if one divides $\phi$ by its $z$--coordinate. Hence $\hat \phi$ is a topological $C^0$--continuation and $t = \frac{1}{2}$ is a removable singularity. Furthermore, this also shows that $\phi$ is quasi continuous: the $C^0$-continuation here coincides with the quasi continuation since there is only one singularity to resolve. 
\end{example}
The next theorem will give us a way to resolve singularities: given a function $f: [0,1] \rightarrow \C^d$ which attains $0$, and therefore has a singularity, we might find a $C^0$-continuation of $[f]$ by dividing the whole image by one of the components of the function $f$.

\begin{theorem}[Existence of a Continuous Path, \cite{kortenkamp2001grundlagen}]
    \label{sing_basics:ex_path}
    Let $\Psi :[0,1]  \rightarrow \C^d$ be a continuous path and $t_0 \in [0,1]$. Let $A\subset \{1, \ldots, d\}$ be the set of indices of $\Psi$--components which are not constantly $0$ on a neighborhood of $t_0$. If 
    \begin{enumerate}
        \item $A \neq \emptyset$ 
        \item for all $i,j \in A: \, \Psi_i(t)/\Psi_j(t)$ or $\Psi_j(t)/\Psi_i(t)$ has a removable singularity or is continuous at $t_0$,
    \end{enumerate}
    then there are an $\epsilon >0$, a continuous path $\Theta: B_\epsilon(t_0) \rightarrow \C^d \Smz$ and a function $\lambda: B_\epsilon(t_0) \rightarrow \C$ such that $\lambda(t) \cdot \Theta(t) = \Psi(t)$ for all $t\in B_\epsilon(t_0) \setminus \{t_0\}$.

The described functions $\Theta, \lambda$  have the form 
    \[
        \Theta(t) = \frac{\Psi(t)}{\Psi_i(t)}, \quad \lambda(t) = \Psi_i(t)
    \]
    where $i \in A$ is appropriately chosen such that all coordinate entries of $\Theta$ have a removable singularity or are continuous.
\end{theorem}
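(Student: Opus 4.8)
The plan is to reduce the whole statement to a single combinatorial choice on the finite index set $A$: if I can produce an index $i\in A$ such that $\Psi_j/\Psi_i$ is $C^0$--continuable at $t_0$ for \emph{every} $j\in A$, then $\Theta:=\Psi/\Psi_i$ and $\lambda:=\Psi_i$ will do everything the theorem asks. To find such an $i$ I would put a relation on $A$: declare $i\trianglelefteq j$ when the quotient $\Psi_j/\Psi_i$ (defined wherever $\Psi_i\neq 0$) has a removable singularity at $t_0$ or is continuous there --- concretely, when $\Psi_i$ is zero-free on some punctured neighborhood of $t_0$ and $\lim_{t\to t_0}\Psi_j(t)/\Psi_i(t)$ exists in $\C$ --- and add $i\trianglelefteq i$ by convention. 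Reflexivity is then free, and totality is \emph{exactly} hypothesis~(2). The one real ingredient is transitivity.

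For transitivity I would argue: if $i\trianglelefteq j$ and $j\trianglelefteq k$, then $\Psi_i$ and $\Psi_j$ are both zero-free on a common punctured neighborhood $U$ of $t_0$, and on $U$ one has the identity $\Psi_k/\Psi_i=(\Psi_k/\Psi_j)\cdot(\Psi_j/\Psi_i)$; each factor has a finite limit as $t\to t_0$, hence so does the product, so $\Psi_k/\Psi_i$ is $C^0$--continuable at $t_0$, i.e.\ $i\trianglelefteq k$. A total preorder on a finite non-empty set (non-empty by hypothesis~(1)) has a least element --- pass to the induced total order on $\trianglelefteq$--equivalence classes and pick any element of the bottom class --- so I would take $i\in A$ to be such a least element. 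By construction $\lim_{t\to t_0}\Psi_j(t)/\Psi_i(t)$ exists for all $j\in A$, and $\Psi_i$ is zero-free near $t_0$ except possibly at $t_0$ (forced by $i\trianglelefteq j$ for any other $j\in A$ when $\lvert A\rvert\ge 2$, and handled separately when $A=\{i\}$). For $j\notin A$ the coordinate $\Psi_j$ vanishes identically near $t_0$, so $\Psi_j/\Psi_i\equiv 0$ there.

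Finally I would assemble the answer: shrink $\epsilon>0$ so that $\Psi_i$ is zero-free on $B_\epsilon(t_0)\setminus\{t_0\}$ and every $\Psi_j$ with $j\notin A$ vanishes on all of $B_\epsilon(t_0)$ (a finite intersection of neighborhoods); set $\lambda(t):=\Psi_i(t)$, continuous on $B_\epsilon(t_0)$ as a coordinate of $\Psi$; set $\Theta(t):=\Psi(t)/\Psi_i(t)$ for $t\neq t_0$ and extend $\Theta$ to $t_0$ by the componentwise limits found above. Then $\Theta:B_\epsilon(t_0)\to\C^d$ is continuous, its $i$--th coordinate is constantly $1$ (so its limit there is $1$ too), whence $\Theta(t)\in\C^d\Smz$ for every $t\in B_\epsilon(t_0)$, while $\lambda(t)\Theta(t)=\Psi_i(t)\cdot\Psi(t)/\Psi_i(t)=\Psi(t)$ on $B_\epsilon(t_0)\setminus\{t_0\}$; both functions have precisely the stated form. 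In the degenerate case $A=\{i\}$, every other coordinate of $\Psi$ vanishes near $t_0$, so one can simply take $\Theta\equiv e_i$ and $\lambda:=\Psi_i$.

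The hard part will be the transitivity step and, more precisely, the domain bookkeeping around it: I have to commit to reading ``removable singularity at $t_0$'' as ``defined on a punctured neighborhood of $t_0$ and continuously extendable'', so that the three quotients in $\Psi_k/\Psi_i=(\Psi_k/\Psi_j)(\Psi_j/\Psi_i)$ genuinely share a punctured neighborhood on which both the identity and the limit computation are legitimate. Everything after that --- existence of a least element, the choice of $\epsilon$, and the verification that $\Theta,\lambda$ have the required properties --- is routine. (As an alternative to the preorder one can induct on $\lvert A\rvert$, discarding at each step a provably ``higher-order'' index and using the same product-of-limits fact to recover hypothesis~(2) for the smaller set; the content is the same.)
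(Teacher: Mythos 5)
The paper never proves this theorem: it is quoted from Kortenkamp and Richter-Gebert \cite{kortenkamp2001grundlagen} and used as a black box (the surrounding text only discusses their proof), so there is no in-paper argument to compare you against; what can be assessed is correctness, and your proof is sound and is essentially the intended argument behind the cited result. The real content is your selection step: defining $i\trianglelefteq j$ as ``$\Psi_j/\Psi_i$ is defined on a punctured neighbourhood of $t_0$ and extends continuously there'', reading totality on $A$ off hypothesis~(2), getting transitivity from $\Psi_k/\Psi_i=(\Psi_k/\Psi_j)(\Psi_j/\Psi_i)$ and the product rule for limits on a common punctured neighbourhood, and taking a least element of the resulting total preorder on the finite nonempty set $A$ --- this is precisely the ``divide by the component of slowest vanishing'' choice the statement alludes to with ``appropriately chosen''. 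The assembly is complete: $\Theta_i\equiv 1$ gives $\Theta(t)\in\C^d\Smz$, indices $j\notin A$ contribute identically zero coordinates after shrinking $\epsilon$, $\lambda=\Psi_i$ is continuous, and $\lambda\Theta=\Psi$ on $B_\epsilon(t_0)\setminus\{t_0\}$. Two caveats, both of which you already flag or handle: your reading of ``removable singularity'' as presupposing a punctured neighbourhood on which the quotient is defined is genuinely needed (if zeros of the would-be denominator accumulated at $t_0$, the quotient form $\Theta=\Psi/\Psi_i$ in the statement itself would not make sense), so this is a gap in the statement's precision rather than in your proof, and it evaporates in the analytic/algebraic setting the paper later restricts to; and for $t_0$ an endpoint of $[0,1]$ the ball $B_\epsilon(t_0)$ must be read relative to $[0,1]$, while your separate treatment of the singleton case $A=\{i\}$ correctly covers a selected component that may itself vanish on a sequence approaching $t_0$.
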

\begin{remark}
    The last theorem might seem a bit unimposing at first look, but it is actually a criterion for the situation when a singularity is removable! If the prerequisites are fulfilled, we can split $\Psi$ into two parts: a vanishing ($\lambda$) and a non-vanishing ($\Theta$) part at $t_0$.
    Since $\Theta: B_\epsilon(t_0) \rightarrow \C^d \Smz$ is non--vanishing at $t_0$ we can use $[\Theta]$ as a $C^0$-continuation of $[\Psi]$ at $t_0$. We will make note of this fact in \Cref{sing_basics:frac_conti_lemma}. 
\end{remark}
First we will establish more notation which is implied by the previous theorem.
\begin{definition}[Fractionally Continuable]
    \label{sing_basics:prefac} 
    If a continuous function $\Psi :[0,1]  \rightarrow \C^d$ fulfills the requirements of the \Cref{sing_basics:ex_path} at $t_0 \in [0,1]$, we call $\Psi$ \textit{fractionally continuable at $\mathit{t_0}$}. And we call the fractional function $\Theta: B_\epsilon(t_0) \rightarrow \C^d \Smz$ as defined in \Cref{sing_basics:ex_path} the \textit{fractional continuation of $\mathit{\Psi \text{ around } t_0}$}. We call the function $\lambda: B_\epsilon(t_0) \rightarrow \C$ as defined in \Cref{sing_basics:ex_path} \textit{removable prefactor} or, more shortly, \textit{prefactor}. We call the component $i \in A$, which defines $\lambda$ in \Cref{sing_basics:ex_path}, the \textit{selected component at $\mathit{t_0}$}.


If additionally $\lambda$ is analytic, we call $\lambda$ an \textit{analytic removable prefactor} or shortly \textit{analytic prefactor}. 

If the function $\Theta$ is analytic on $B_\epsilon(t_0)$, then we call the function $\Psi$ \textit{analytical fractionally continuable at $\mathit{t_0}$}. Furthermore we call $\Theta$ the \textit{analytic fractional continuation of $\mathit{\Psi \text{ around } t_0}$}.
\end{definition}
The next lemma shows that if a function is fractionally continuable then it is also $C^0$-continuable.
\begin{lemma}[Fractionally Continuable $\Rightarrow C^0$-continuable]
    \label{sing_basics:frac_conti_lemma}
Let $I \subset [0,1]$ be a subset of the unit interval with no isolated points,
 $\Psi : I  \rightarrow \C^{d+1}$ be continuous and the preimage of zero, denoted by $S:= \Psi^{-1}(0)$, be discrete (\ie $S = \{s_1, \ldots, s_n\}$ for an $n \in \N$) and define $I':= I \setminus S$. Let $\Psi$ be fractionally continuable for all $s_i \in  \hat I \cap S$ where $I' \subsetneq \hat I \subset I$.
    Then $[\Psi]: I' \rightarrow \CP^d \text{ with } t \mapsto [\Psi(t)]$ is $C^0$--continuable on $\hat I$. And if it furthermore holds true that $\hat I = I$, then $[\Psi]$ is quasi continuous.
\end{lemma}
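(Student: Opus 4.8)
The plan is to treat each singularity $s_i$ lying in $\hat I$ in isolation: I would first invoke \Cref{sing_basics:ex_path} to split $\Psi$ locally around $s_i$ into a vanishing prefactor $\lambda_i$ and a non-vanishing lift $\Theta_i$, observe that $[\Theta_i]$ is therefore a local continuous representative of $[\Psi]$ which extends across $s_i$, build from it an honest $C^0$-continuation of $[\Psi]$ at the single point $s_i$, and finally glue these point-wise continuations into one continuous map on $\hat I$ exactly as in the lemma on \emph{Quasi Continuity and $C^0$-Continuations}.

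In more detail: since $S=\{s_1,\dots,s_n\}$ is discrete, for each $s_i\in\hat I\cap S$ I can choose $\epsilon_i>0$ with $B_{\epsilon_i}(s_i)\cap S=\{s_i\}$, and then \Cref{sing_basics:ex_path} yields (after shrinking $\epsilon_i$ if necessary) a continuous path $\Theta_i\colon B_{\epsilon_i}(s_i)\to\C^{d+1}\Smz$ and a function $\lambda_i\colon B_{\epsilon_i}(s_i)\to\C$ with $\lambda_i(t)\,\Theta_i(t)=\Psi(t)$ on the punctured ball. For $t\in (B_{\epsilon_i}(s_i)\cap I)\setminus\{s_i\}$ one has $\Psi(t)\neq 0$ by the choice of $\epsilon_i$, hence $\lambda_i(t)\neq 0$ there, so $[\Psi(t)]=[\lambda_i(t)\,\Theta_i(t)]=[\Theta_i(t)]$. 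I would then define $\psi_i\colon I'\cup\{s_i\}\to\CP^d$ by $\psi_i:=[\Psi]$ on $I'$ and $\psi_i(s_i):=[\Theta_i(s_i)]$. On a neighbourhood of $s_i$ this map coincides with $\pi\circ\Theta_i$, which is continuous by \Cref{sing_basics:conti_inout}(2) because $\Theta_i$ is continuous and nowhere zero; on the set $I'$, which is open in $I'\cup\{s_i\}$, it equals the continuous map $[\Psi]$. Hence $\psi_i$ is continuous and agrees with $[\Psi]$ on $I'$, i.e.\ it is a $C^0$-continuation of $[\Psi]$ at $s_i$ in the sense of \Cref{sing_basics:topo_c0_conti}.

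Carrying this out for every $s_i\in\hat I\cap S=\hat I\setminus I'$ shows that $[\Psi]\colon I'\to\CP^d$ admits a $C^0$-continuation at each point of $\hat I\setminus I'$. Since $\hat I$ is again a subset of $[0,1]$ with no isolated points and $\hat I\setminus I'$ is finite, applying the lemma on \emph{Quasi Continuity and $C^0$-Continuations} with $\hat I$ playing the role of $I$ glues the $\psi_i$ into a single continuous $\hat\phi\colon\hat I\to\CP^d$ with $\hat\phi(t)=[\Psi(t)]$ for all $t\in I'$; this is precisely the statement that $[\Psi]$ is $C^0$-continuable on $\hat I$ (\Cref{sing_basics:topo_c0_conti}), and in the case $\hat I=I$ it is exactly the definition of $[\Psi]$ being quasi continuous (\Cref{sing_basics:topo_quasi_conti}). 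I expect the only delicate point to be the bookkeeping with the local neighbourhoods — choosing $\epsilon_i$ so small that $\Psi$ does not vanish on the punctured ball (so that $\lambda_i$ may legitimately be cancelled) and then verifying continuity of $\psi_i$ precisely at the seam point $s_i$ rather than merely on the punctured ball; everything else is a direct application of the quoted results.
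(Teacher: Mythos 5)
Your proposal is correct and follows essentially the same route as the paper: invoke \Cref{sing_basics:ex_path} at each $s_i$, use the non-vanishing continuous $\Theta_i$ as a local projective representative so that $[\Psi]=[\Theta_i]$ on the punctured neighbourhood, conclude a $C^0$-continuation at each singularity, and identify quasi continuity via the earlier gluing lemma. You are merely more explicit than the paper about the bookkeeping (non-vanishing of $\lambda_i$, continuity at the seam point via \Cref{sing_basics:conti_inout}, and the final gluing on $\hat I$), which the paper leaves implicit.
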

\begin{proof}
    Let $i \in \{1,\ldots,n\} $. Since $\Psi$ is fractionally continuable for all $s_i \in \hat I \cap S$, by \Cref{sing_basics:ex_path} there exist $\epsilon_i >0$ and a continuous functions $\Theta_i: B_{\epsilon_i}(s_i) \rightarrow \C^{d+1} \Smz$ and $\lambda_i: B_{\epsilon_i}(s_i) \rightarrow \C$ such that 
    	\[
      [\lambda_i(t) \cdot \Theta_i(t)] = [\Psi(t)] 
      \text{ if and only if } [\Theta_i(t)] = [\Psi(t)]  \text{ for all } t \in  B_{\epsilon_i}(s_i) \setminus \{s_i\}.
        \] 
        As $\Theta_i$ is continuous and non-vanishing on whole $B_{\epsilon_i}(s_i)$, this means that $\Psi$ has a $C^0$-continuation for all $s_i \in \hat I \cap S$. Finally, by definition the property that $\hat I = I$ is equivalent for $[\Psi]$ being quasi continuous.
\end{proof}
In their proof of \Cref{sing_basics:ex_path} Kortenkamp and Richter-Gebert assume that possibly occurring singularities may be removed, although they do not state how this desingulariazation should be achieved. From a mathematical point of view the problem is quite easy: either one checks for the classical epsilon-delta criterion or equivalently uses a limit process. 

From a computational point of view, the problem is completely different. D.\ Richardson proved in \cite{richardson1969solution} that zero testing for certain classes of functions is undecidable. As D.~Gruntz points out in his Ph.D.\ Thesis (\cite{gruntz1996computing}), this problem can be reformulated in order to check for the continuity of a function and therefore the problem is also undecidable (for certain functions). This result is very devastating at first sight, but the situation is not that bad if one assumes certain regularity of functions. We are in the lucky position that the functions we are considering are even analytic, which is a very strong property. Nevertheless, we will face problems at essential singularities, as we will see later on.

\subsection{The Analytic Case}
\label{sing_basics:sec:analytic_case}
The problem of handling removable singularities in geometric constructions is closely related to the problem of removable singularities in complex analysis since one can analyze quotients of functions, as we saw in the theorem by \jrguk (\Cref{sing_basics:ex_path}). For quotients of holomorphic functions, this is classic theory and can be found in every textbook on complex variables (see \eg \cite{ krantz2012handbook}). The standard procedure is to extend the function in a continuous way and this is equivalent to analytic continuation by Riemann's theorem (\cite{krantz2012handbook} \page 42-43). More concretely, one usually expands the functions to its unique power series and cancels common zeros at the singularity.

Firstly, we will analyze the connection of roots and singularities. If we speak of roots we usually mean a point where a function attains $0$. We will refer to functions like $\sqrt[n]{z}$ as ``radical expressions'' or more shortly ``radicals'' to avoid confusion. 

We will exploit the following fact well known from complex analysis:
\begin{lemma}[{Roots and Singularities}]
    \label{sing_basics:zersing}
    Let $D \subset \C$ be a domain and $z_0 \in D$. Let $f,g: D \rightarrow \C$ be analytic. If both $f$ posses a root of order $k \in \N$ at $z_0$ and $g$ a root of order $l \in \N$ at $z_0$, $k \geq l$ then $h(z): = f(z)/g(z)$ has a removable singularity at $z_0$. Furthermore $h$ can be written by 
    \[
        h(z) = \frac{(z-z_0)^k \cdot \hat f(z)}{(z-z_0)^l \cdot \hat g(z)}
    \]
    where both $\hat f, \hat g$ are analytic and $\hat g$ does not vanish at $z_0$.
\end{lemma}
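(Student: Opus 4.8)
The plan is to reduce everything to the local structure theory of holomorphic functions near a zero. First I would recall the factorization at a root: since $f$ is analytic on $D$ with a root of order $k$ at $z_0$, there is an analytic function $\hat f: D \rightarrow \C$ with $\hat f(z_0) \neq 0$ and $f(z) = (z-z_0)^k \hat f(z)$ on $D$; concretely $\hat f$ is defined by the power series obtained by dividing the Taylor series of $f$ by $(z-z_0)^k$, and it extends analytically across $z_0$ because the first $k$ Taylor coefficients of $f$ vanish. Likewise $g(z) = (z-z_0)^l \hat g(z)$ with $\hat g$ analytic and $\hat g(z_0) \neq 0$. This is the displayed formula in the statement, so the only remaining content is the claim that $h = f/g$ has a removable singularity at $z_0$.

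Next I would form the quotient using these factorizations on a punctured neighborhood of $z_0$ where $g$ does not vanish (such a neighborhood exists since the zeros of the nonzero analytic function $g$ are isolated). There,
\[
    h(z) = \frac{(z-z_0)^k \hat f(z)}{(z-z_0)^l \hat g(z)} = (z-z_0)^{k-l} \, \frac{\hat f(z)}{\hat g(z)}.
\]
Because $k \geq l$, the exponent $k-l$ is a nonnegative integer, so $(z-z_0)^{k-l}$ is a polynomial, hence entire. Since $\hat g(z_0) \neq 0$ and $\hat g$ is continuous, $\hat g$ is nonzero on a possibly smaller neighborhood of $z_0$, so $\hat f / \hat g$ is analytic there. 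Thus the right-hand side is the restriction to the punctured neighborhood of a function analytic on a full neighborhood of $z_0$, namely $z \mapsto (z-z_0)^{k-l} \hat f(z)/\hat g(z)$. That analytic function provides a continuous (indeed holomorphic) extension of $h$ across $z_0$, which by definition means $h$ has a removable singularity at $z_0$.

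I do not expect a genuine obstacle here; the statement is essentially a bookkeeping exercise around the standard fact that analytic functions factor as $(z-z_0)^{\text{order}}$ times a nonvanishing analytic function. The one point deserving a careful sentence is the existence of the factor $\hat f$ as an \emph{analytic} function on all of $D$ (not merely near $z_0$): away from $z_0$ one simply sets $\hat f(z) = f(z)/(z-z_0)^k$, and near $z_0$ one uses the Taylor expansion to see this formula extends holomorphically, and the two descriptions agree on the overlap by the identity theorem. A second minor point is ensuring $\hat g$ stays away from zero on the neighborhood where we divide, which follows from continuity of $\hat g$ together with $\hat g(z_0)\neq 0$. Once these are in place the conclusion is immediate, and one can optionally remark that the removable singularity is filled by the value $h(z_0) = 0$ if $k > l$ and $h(z_0) = \hat f(z_0)/\hat g(z_0) \neq 0$ if $k = l$.
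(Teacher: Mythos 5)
Your proof is correct and is exactly the standard local-factorization argument; the paper itself states this lemma without proof, as a well-known fact from complex analysis (citing the textbook literature), and your write-up supplies precisely that textbook reasoning: $f(z)=(z-z_0)^k\hat f(z)$, $g(z)=(z-z_0)^l\hat g(z)$ with $\hat g(z_0)\neq 0$, so $h=(z-z_0)^{k-l}\hat f/\hat g$ extends holomorphically across $z_0$ since $k\geq l$. No gaps worth flagging.
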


We will now apply the previous insights to our setting.
\begin{lemma}[Component Decomposition]
	\label{sing_basics:comp_decomp}
        Let $\Psi: I \subset (0,1) \rightarrow \C^d$ be analytic in every component and analytically fractionally continuable at $t_0 \in I$.
        Then we can decompose the components of $\Psi(t)$ around $t_0$ into non-vanishing and (possibly) vanishing parts: there exist $\epsilon > 0$ and $k \in \N$ such that:
            \[
                \Psi(t) = (t-t_0)^k \cdot \tilde \Psi(t)\text{ for all } t \in B_\epsilon(t_0)
    \]
    where $\tilde \Psi(t): B_\epsilon(t_0) \rightarrow \C^d \Smz$ is an analytic function. 
    The function $[\Psi] \big|_{B_\epsilon(t_0)}: B_\epsilon(t_0) \rightarrow \CP^{d-1}, t \mapsto [\Psi(t)]$ is $C^0$--continuable on 
   at $t_0$ with $C^0$-continuation $[\tilde \Psi]$.
\end{lemma}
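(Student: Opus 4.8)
The plan is to extract, component by component, the largest common power of $(t-t_0)$ that divides $\Psi$, using the fact that a non-zero analytic function of one variable has isolated zeros of finite order.

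\textbf{Factoring the components.} First I would unwind the hypothesis: since $\Psi$ is analytically fractionally continuable at $t_0$, in particular the set $A$ of indices whose components are not constantly zero near $t_0$ is non-empty (condition (1) of \Cref{sing_basics:ex_path}); this is all I shall need from the hypothesis, as for an analytic path condition (2) and the analyticity of $\Theta$ are automatic. For each $j \in A$, the component $\Psi_j$ is analytic and, by definition of $A$, not identically zero on any neighbourhood of $t_0$, so by expanding it into its power series about $t_0$ (equivalently, by \Cref{sing_basics:zersing}) it has a zero of some finite order $k_j \ge 0$ at $t_0$; write $\Psi_j(t) = (t-t_0)^{k_j}\,\hat\Psi_j(t)$ on a neighbourhood of $t_0$, with $\hat\Psi_j$ analytic and $\hat\Psi_j(t_0) \neq 0$. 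For $j \notin A$ we simply have $\Psi_j \equiv 0$ near $t_0$.

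\textbf{Defining $k$ and $\tilde\Psi$.} Next I would put $k := \min_{j \in A} k_j$ and fix some $j^\ast \in A$ attaining this minimum, and define $\tilde\Psi_j(t) := \Psi_j(t)/(t-t_0)^k$ for every $j \in \{1,\dots,d\}$. For $j \in A$ this equals $(t-t_0)^{k_j-k}\hat\Psi_j(t)$, which is analytic since $k_j - k \ge 0$; for $j \notin A$ it is identically $0$, hence analytic. Thus $\tilde\Psi := (\tilde\Psi_1,\dots,\tilde\Psi_d)$ is analytic on a neighbourhood of $t_0$ and satisfies $\Psi(t) = (t-t_0)^k\,\tilde\Psi(t)$ there. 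To get non-vanishing on a \emph{full} neighbourhood I would use the distinguished component: $\tilde\Psi_{j^\ast} = \hat\Psi_{j^\ast}$ is continuous with $\tilde\Psi_{j^\ast}(t_0) = \hat\Psi_{j^\ast}(t_0) \neq 0$, so there is $\epsilon > 0$ (also small enough that $B_\epsilon(t_0)$ lies in the domain of $\Psi$) with $\tilde\Psi_{j^\ast}(t) \neq 0$, and therefore $\tilde\Psi(t) \neq 0$, for all $t \in B_\epsilon(t_0)$. This yields the first assertion, with $\tilde\Psi : B_\epsilon(t_0) \to \C^d \Smz$.

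\textbf{The $C^0$-continuation.} Finally, for $t \in B_\epsilon(t_0)\setminus\{t_0\}$ the scalar $(t-t_0)^k$ is non-zero, so $[\Psi(t)] = [(t-t_0)^k\tilde\Psi(t)] = [\tilde\Psi(t)]$ in $\CP^{d-1}$; in particular $\Psi$ has no zero in the punctured ball, so $t_0$ is the only possible singularity there. Since $\tilde\Psi$ is continuous and nowhere zero on all of $B_\epsilon(t_0)$, \Cref{sing_basics:conti_inout}(2), applied with $\Phi = \tilde\Psi$ and $\pi$ the canonical projection $\C^d\Smz \to \CP^{d-1}$, shows that $[\tilde\Psi] : B_\epsilon(t_0) \to \CP^{d-1}$ is continuous. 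Hence $[\tilde\Psi]$ is a $C^0$-continuation of $[\Psi]|_{B_\epsilon(t_0)\setminus\{t_0\}}$ in the sense of \Cref{sing_basics:topo_c0_conti} (taking $I = B_\epsilon(t_0)$, the one candidate singularity $t_0$, and $\hat I = I$), which is exactly the second assertion; if $k=0$ there is in fact no singularity to remove.

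I do not expect a deep obstacle: the content is Weierstrass factorization of analytic functions applied coordinatewise. The one point that genuinely needs care is that $\tilde\Psi$ must be non-zero on an entire neighbourhood of $t_0$ rather than merely at $t_0$ — this is precisely why $k$ must be the \emph{minimum} of the orders $k_j$ and why non-vanishing is read off, via continuity, from the single minimal-order component $j^\ast$. A minor bookkeeping point is treating the indices $j \notin A$ (identically-zero components), which contribute $0$ to $\tilde\Psi$ and are harmless, and phrasing the conclusion so that it matches the precise wording of the $C^0$-continuation definition.
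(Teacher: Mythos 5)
Your proof is correct and follows essentially the same route as the paper: the component-wise factorization via orders of zeros (which the paper dismisses as ``obvious by'' \Cref{sing_basics:zersing}) followed by the observation that $[\Psi(t)]=[\tilde\Psi(t)]$ on the punctured ball while $[\tilde\Psi]$ is continuous and non-vanishing on all of $B_\epsilon(t_0)$, hence a $C^0$-continuation. Your explicit bookkeeping — taking $k$ as the minimum of the component orders, handling the identically-zero components, and using the minimal-order component to get non-vanishing on a full neighbourhood — simply fills in the details the paper leaves implicit.
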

\begin{remark}
    Note that in the previous lemma $k$ is finite but might also be zero. This describes the case when the function $\Psi$ has at least one component which does not vanish on $B_\epsilon(t_0)$. 
\end{remark}
\begin{proof}
The first part of the claim is obvious by \Cref{sing_basics:zersing}. We will only proof the $C^0$-continuation statement: since it holds true that 
            \[
                \Psi(t) = (t-t_0)^k \cdot \tilde \Psi(t)\text{ for all } t \in B_\epsilon(t_0)
    \]
    and we know that $\tilde \Psi$ and $(t-t_0)^k$ are non-vanishing on $ B_\epsilon(t_0) \setminus \{t_0\}$, we find:

            \[
                [\Psi(t)] = [(t-t_0)^k \cdot \tilde \Psi(t)] = [\tilde \Psi(t)] \text{ for all } t \in B_\epsilon(t_0) \setminus \{t_0\}
    \]
    So the equivalence classes of $[\Psi (t)]$ and $[\tilde \Psi (t)]$ coincide everywhere on $B_\epsilon(t_0) \setminus \{t_0\}$. Since $[\tilde \Psi (t)]$ is continuous (even analytic) and non-vanishing on whole $B_\epsilon(t_0)$ we conclude that $[\tilde \Psi (t)]$ is a $C^0$-continuation of $[\Psi]$ at $t_0$.
\end{proof}
\begin{remark}
   More figuratively speaking, we factor out the minimum amount of roots at the singularity $t_0$ such that not all components vanish.
\end{remark}
\begin{definition}[Assoticated Polynomial and Order of Removable Singularity]
	\label{sing_basics:ass_poly}
        The function $(t-t_0)^k$ ($k  \in \N )$ in \Cref{sing_basics:comp_decomp} is called \textit{$\mathit{t_0}$--associated polynomial of $\Psi$} or shortly \textit{associated polynomial}, if it is obvious at which point we consider the series expansion. Usually we will denote the associated polynomial by $p(t)$. We call $k$ the \textit{order of the removable singularity}. 
\end{definition}

%
%
\begin{remark}
    As we saw in the two preceding statements the associated polynomial a removable prefactor which ``pushes'' functions unnecessarily to zero. So removing the associated polynomial removes also the singularity.
\end{remark}
    The assumption that the functions have to be analytic is not far fetched. Although we want to model continuous movement, a much weaker premise, Kortenkamp and Richter-Gebert show that problems in dynamic geometry can be formulated as algebraic systems (\cite{kortenkamp2001grundlagen}). Employing classic theory of algebraic curves as described in \cite{brieskorn2012plane}, one can describe these algebraic curves locally as generalized power series, so called Puiseux series which are complex analytic functions. 

\section{Singularities and Derivatives}
\label{sec:sing_and_deriv}
Now we want to investigate the connection between analytic prefactors and derivatives. We will find that out removable singularities that do not involve radical expression can be easily resolved using derivatives. 
\begin{lemma}[Removal of Analytic Prefactors]
	\label{sing_basics:removal_ana}
        Let $\Psi: I \subset (0,1) \rightarrow \C^d$ be analytic in every component. Futhermore, let $\Psi$ be analytic fractionally continuable at $t_0 \in I$, with a root of order $k \in \N$ at $t_0$ in component $i$, where $i$ the selected component at $t_0$.
        Then an analytic fractional continuation $\Theta : B_\epsilon(t_0) \rightarrow \C^d $ on an open subset around $t_0$ $\Theta$ is given by:
    \[
        \Theta_j(t):= 
        \begin{cases}
            \Psi_j(t) / \Psi_i(t), &\text{ if } t\neq t_0 \\
            \Psi_j^{(k)}(t) / \Psi_i^{(k)}(t) , &\text{ if } t = t_0
        \end{cases}
    \]
    for all $j \in \{1, \ldots, d\}$.
\end{lemma}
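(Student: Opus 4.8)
The plan is to reduce everything to the component decomposition of \Cref{sing_basics:comp_decomp}. Since $\Psi$ is analytic in every component and analytically fractionally continuable at $t_0$, that lemma gives $\epsilon > 0$ and $m \in \N$ with $\Psi(t) = (t-t_0)^m \tilde\Psi(t)$ on $B_\epsilon(t_0)$, where $\tilde\Psi: B_\epsilon(t_0) \to \C^d \Smz$ is analytic. The first genuine step is to check that $m = k$, i.e.\ that the root order of the selected component $i$ equals the order of the removable singularity. Pick an index $\ell$ with $\tilde\Psi_\ell(t_0) \neq 0$; then $\Psi_\ell$ vanishes to order exactly $m$ at $t_0$. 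Because $i$ is the selected component, $\Psi_\ell / \Psi_i = \Theta_\ell$ has a removable singularity or is continuous at $t_0$, which forces $\operatorname{ord}_{t_0}\Psi_i \le \operatorname{ord}_{t_0}\Psi_\ell = m$; since also $m = \min_j \operatorname{ord}_{t_0}\Psi_j \le \operatorname{ord}_{t_0}\Psi_i = k$, we conclude $k = m$, and in particular $\tilde\Psi_i(t_0)\neq 0$.

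Second, I would rewrite $\Theta$ in terms of $\tilde\Psi$. For $t \in B_\epsilon(t_0)\setminus\{t_0\}$ the factors $(t-t_0)^k$ cancel, so $\Theta_j(t) = \Psi_j(t)/\Psi_i(t) = \tilde\Psi_j(t)/\tilde\Psi_i(t)$. Shrinking $\epsilon$ so that the analytic function $\tilde\Psi_i$ (nonzero at $t_0$) has no zero on $B_\epsilon(t_0)$, the map $t \mapsto \tilde\Psi_j(t)/\tilde\Psi_i(t)$ is analytic on all of $B_\epsilon(t_0)$; this is the candidate continuation. It is identically $1$ in component $i$, hence maps into $\C^d\Smz$, and with $\lambda := \Psi_i$ one has $\lambda(t)\Theta(t) = \Psi(t)$ for $t \neq t_0$, so it is an analytic fractional continuation in the sense of \Cref{sing_basics:prefac}.

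Third, I would identify the value at $t_0$ with the derivative quotient in the statement. Expanding $\tilde\Psi_j(t) = \sum_{n \ge 0} c_{j,n}(t-t_0)^n$ and multiplying by $(t-t_0)^k$ shows that the coefficient of $(t-t_0)^k$ in the Taylor series of $\Psi_j$ at $t_0$ is $c_{j,0} = \tilde\Psi_j(t_0)$; equivalently, applying the Leibniz rule to $(t-t_0)^k\tilde\Psi_j(t)$, every term in which $(t-t_0)^k$ is differentiated fewer than $k$ times still carries a positive power of $(t-t_0)$ and so vanishes at $t_0$. Hence $\Psi_j^{(k)}(t_0) = k!\,\tilde\Psi_j(t_0)$, and in particular $\Psi_i^{(k)}(t_0) = k!\,\tilde\Psi_i(t_0) \neq 0$, so the quotient is well defined and
\[
\frac{\Psi_j^{(k)}(t_0)}{\Psi_i^{(k)}(t_0)} = \frac{k!\,\tilde\Psi_j(t_0)}{k!\,\tilde\Psi_i(t_0)} = \frac{\tilde\Psi_j(t_0)}{\tilde\Psi_i(t_0)} = \lim_{t\to t_0}\Theta_j(t),
\]
which is exactly the value the lemma assigns to $\Theta_j(t_0)$. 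Therefore the piecewise-defined $\Theta$ coincides with the analytic function $\tilde\Psi_j/\tilde\Psi_i$ on $B_\epsilon(t_0)$ and is itself analytic, completing the proof.

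I expect the only subtle point to be the first step: ensuring that the root order $k$ in the selected component is the minimal vanishing order among all components, so that cancelling $(t-t_0)^k$ really leaves $\tilde\Psi_i(t_0)\neq 0$ rather than a residual zero. Everything after that is the elementary Taylor/Leibniz bookkeeping displayed above.
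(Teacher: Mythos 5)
Your proof is correct, but it takes a different route from the paper: the paper disposes of this lemma in one line by citing the complex version of de L'Hospital's rule, whereas you re-derive the needed instance of that rule from scratch. Concretely, you invoke \Cref{sing_basics:comp_decomp} to write $\Psi(t)=(t-t_0)^m\tilde\Psi(t)$ with $\tilde\Psi$ analytic and nonvanishing, prove that the order $k$ of the selected component equals $m$ (so that $\tilde\Psi_i(t_0)\neq 0$), and then obtain $\Psi_j^{(k)}(t_0)=k!\,\tilde\Psi_j(t_0)$ by the Leibniz rule, which identifies the derivative quotient with $\lim_{t\to t_0}\Psi_j(t)/\Psi_i(t)=\tilde\Psi_j(t_0)/\tilde\Psi_i(t_0)$. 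What your version buys is self-containedness and explicitness on two points the paper's citation leaves implicit: that $\Psi_j$ vanishes to order at least $k$ at $t_0$ for every $j$ (needed to iterate l'Hospital $k$ times), and that $\Psi_i^{(k)}(t_0)\neq 0$ so the quotient at $t_0$ is well defined; your $k=m$ argument via the removability of $\Psi_\ell/\Psi_i$ for a component $\ell$ with $\tilde\Psi_\ell(t_0)\neq0$ is exactly the right way to secure this. The cost is some redundancy: the computation $\Psi^{(k)}(t_0)=k!\,\tilde\Psi(t_0)$ is precisely what the paper carries out later in the proof of \Cref{sing:kth_deriv}, so your argument effectively front-loads that calculation into this lemma, while the paper keeps this lemma as a bare application of de L'Hospital and postpones the decomposition bookkeeping.
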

\begin{proof}
  Direct application of the complex version of de L'Hospital's rule (\cite{freitag2000funktionen}).
\end{proof}
The last Lemma motivates a more direct way to the remove singularities. Instead of evaluating the derivatives of quotients of components one can also apply the derivatives directly to the components, which will be our next statement.
\begin{theorem}[Direct Derivation]
    \label{sing:kth_deriv}
 Let $\Psi: I \subset (0,1) \rightarrow \C^d$ be analytic in every component and analytical fractionally continuable at $t_0 \in I$,
 with $p(t) = (t-t_0)^k$ an analytic prefactor of order $k\in \N$.
    Then the $k$'th derivative of $\Psi$ at $t_0$ is a non--zero scalar multiple of the analytic fractional continuation $\Theta$ at $t_0$: there is $\tau \in \C \setminus \{0\}$:
    \[
	    \Theta(t_0) = \tau \cdot \Psi^{(k)}(t_0)
    \]

    Furthermore, the continuous function $[\Psi]: I \setminus \{\Psi^{-1}(0)\} \rightarrow \CP^{d-1}, t \mapsto [\Psi(t)]$ has a $C^0$--continuation at $t_0$.
\end{theorem}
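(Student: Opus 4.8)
The plan is to reduce everything to the component decomposition of \Cref{sing_basics:comp_decomp} and then differentiate $k$ times. First I would invoke that lemma: since $\Psi$ is analytic in every component and analytically fractionally continuable at $t_0$ with associated polynomial $(t-t_0)^k$, there are $\epsilon > 0$ and an analytic $\tilde\Psi : B_\epsilon(t_0) \rightarrow \C^d \Smz$ with $\Psi(t) = (t-t_0)^k \cdot \tilde\Psi(t)$ on $B_\epsilon(t_0)$. The crucial bookkeeping point is that $i$ is the \emph{selected component} at $t_0$, so $\lambda = \Psi_i$ and $\Theta = \Psi/\Psi_i$ has a removable singularity (or is continuous) in every coordinate; this forces the vanishing order of $\Psi_i$ at $t_0$ to be minimal among all components, hence equal to $k$, and therefore $\tilde\Psi_i(t_0) \neq 0$.

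Next I would differentiate the identity $\Psi(t) = (t-t_0)^k \cdot \tilde\Psi(t)$ componentwise $k$ times and evaluate at $t_0$. By the Leibniz rule every summand except the one in which all $k$ derivatives fall on $(t-t_0)^k$ still carries a positive power of $(t-t_0)$ and hence vanishes at $t_0$; the surviving term gives $\Psi^{(k)}(t_0) = k!\,\tilde\Psi(t_0)$. In particular $\Psi^{(k)}(t_0) \neq 0$, because $\tilde\Psi(t_0) \in \C^d \Smz$.

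Then I would compute $\Theta(t_0)$. On $B_\epsilon(t_0)\setminus\{t_0\}$ one has $\Theta(t) = \Psi(t)/\Psi_i(t) = \tilde\Psi(t)/\tilde\Psi_i(t)$, and since $\tilde\Psi_i(t_0) \neq 0$ the right-hand side is analytic, hence continuous, at $t_0$; by uniqueness of the analytic (continuous) extension this forces $\Theta(t_0) = \tilde\Psi(t_0)/\tilde\Psi_i(t_0)$. Combining the two computations yields $\Theta(t_0) = \frac{1}{k!\,\tilde\Psi_i(t_0)}\,\Psi^{(k)}(t_0)$, so the claim holds with $\tau := \frac{1}{k!\,\tilde\Psi_i(t_0)} \in \C\setminus\{0\}$. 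For the last assertion, \Cref{sing_basics:comp_decomp} already gives that $[\tilde\Psi]$ is a $C^0$-continuation of $[\Psi]$ at $t_0$; since $[\Psi^{(k)}(t_0)] = [\tilde\Psi(t_0)]$ (they differ only by the nonzero scalar $k!$), extending $[\Psi]$ at $t_0$ by the value $[\Psi^{(k)}(t_0)]$ is a $C^0$-continuation. Alternatively one can observe that $[\Theta]$ is already a $C^0$-continuation (as noted after \Cref{sing_basics:ex_path}) and that $[\Theta(t_0)] = [\Psi^{(k)}(t_0)]$.

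The main obstacle is not any computation but the bookkeeping in the first paragraph: one must carefully relate the integer $k$ in the hypothesis (the order of the analytic prefactor, i.e.\ of the associated polynomial) to the minimal vanishing order appearing in \Cref{sing_basics:comp_decomp}, and deduce from $i$ being the selected component that $\tilde\Psi_i(t_0)\neq 0$ — equivalently, that $\Psi_i$ realizes the minimal order. This also reuses \Cref{sing_basics:zersing} and is essentially the content already packaged in \Cref{sing_basics:removal_ana}; once it is pinned down, the remaining steps are the Leibniz-rule evaluation and a one-line continuity argument.
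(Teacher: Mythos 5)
Your proof is correct and follows essentially the same route as the paper: decompose $\Psi(t) = (t-t_0)^k\,\tilde\Psi(t)$ via \Cref{sing_basics:comp_decomp}, differentiate $k$ times to obtain $\Psi^{(k)}(t_0) = k!\,\tilde\Psi(t_0)$, and identify $\Theta(t_0) = \tilde\Psi(t_0)/\tilde\Psi_i(t_0)$ (the paper does this step by invoking \Cref{sing_basics:removal_ana}, you by cancelling the prefactor in the quotient and using uniqueness of the continuous extension — an inessential difference). Incidentally, your constant $\tau = \frac{1}{k!\,\tilde\Psi_i(t_0)}$ is the correct one; the paper's stated $\tau = \frac{k!}{\tilde\Psi_i(t_0)}$ is a harmless slip, and the scalar is irrelevant for the projective conclusion in any case.
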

\begin{proof}
    The proof is essentially based on the product rule and rules of derivation for polynomials. By \Cref{sing_basics:comp_decomp} we know that we can find a decomposition of $\Psi$ such that $\Psi(t) = p(t) \cdot \tilde \Psi(t)=  (t-t_0)^k \cdot \tilde \Psi(t)$ with $\tilde \Psi(t)$ analytic and non-vanishing at $t_0$, then
    \begin{align*}
	    \Psi(t_0) &=  p(t_0) \cdot \tilde \Psi(t_0) \\
	    \Rightarrow \Psi^{(1)}(t_0) &= p^{(1)}(t_0) \cdot \tilde \Psi(t_0) + \underbrace{p(t_0)}_{= 0 \text{ by definition}} \cdot \Psi^{(1)}(t_0)\\
	    \Rightarrow \Psi^{(1)}(t_0) &= p^{(1)}(t_0) \cdot \tilde \Psi(t_0) \\
         \vdots \\ 
	\Rightarrow \Psi^{(k)}(t_0) &= p^{(k)}(t_0) \cdot \tilde \Psi(t_0) = k! \cdot \tilde \Psi(t_0) 
\end{align*}
On the other, hand we know that the $j$'th component of the analytic fractional continuation $\Theta$ is given by  
\[
	\Theta_j(t_0)= 	\Psi_j^{(k)}(t_0) / \Psi_i^{(k)}(t_0),
\]
where $i \in \{1, \ldots, d\}$ is the selected component at $t_0$, which we proved in \Cref{sing_basics:removal_ana}.
Using \Cref{sing_basics:comp_decomp} we can write this as: 
\begin{align*}
\Theta_j(t_0)&= 	\Psi_j^{(k)}(t_0) / \Psi_i^{(k)}(t_0)\\ &= \left. \left(\frac{\dd^k}{\dt^k} \, (p(t)  \cdot \tilde \Psi_j(t))\right)\right\rvert_{t=t_0} / \left.\left(\frac{\dd^k}{\dt^k} \, (p(t) \cdot \tilde \Psi_i(t))\right) \right\rvert_{t=t_0}\\
				  &= \left(k! \cdot \tilde \Psi_j(t_0)\right) / \left(k! \cdot \tilde \Psi_i(t_0)\right)\\
	     &= \tilde \Psi_j(t_0)) / \tilde \Psi_i(t_0)
\end{align*}
So we can set, $\tau= \frac{k!}{\tilde \Psi_i(t_0)}$ where $\tilde \Psi_i(t_0) \neq 0$ by construction, and have:
\[
	\Theta(t_0) = \tau \cdot \Psi^{(k)}(t_0).
\]
Finally, this also means that 
\[
    [\Theta(t_0)] = [\tau \cdot \Psi^{(k)}(t_0)] = [\Psi^{(k)}(t_0)]
\]
which shows the $C^0$-continuation property.
\end{proof}
Now we will give a series of examples which illustrate the concept discussed. 
\begin{remark}
    We defined quasi continuity only for functions whose pre-image is a subset of the unit interval. We will deviate from this assumption from time to time if examples can be easier written down using different (bounded) intervals. Of course this is without loss of generality, one can simple re-parameterize the functions at any time to fit into a subset of the unit interval.
\end{remark}
\begin{example}[Farpoint of Parallel Lines]
	Let $a = (0,1,0)^T$ and $b(t):= (0,1,(t-\tfrac{1}{2})^3), \; t \in [0,1]$ be families of two lines
    in $\RP^2$. Denote their point of intersection as $P:= a \wedge b$ with
    \[
        P(t) =  \begin{pmatrix}  (t-\tfrac{1}{2})^3 \\ 0 \\ 0 \end{pmatrix} =(t-\tfrac{1}{2})^3 \begin{pmatrix}  1 \\ 0 \\ 0 \end{pmatrix}.
    \]
    So it is easy to see that their point of intersection is either a far point or not defined (for $t = \tfrac 1 2$). The singularity at $t=\tfrac 1 2$ has order 3 and we can apply the developed theory.
    By \Cref{sing:kth_deriv} we have to evaluate three derivatives
    of $P$: 
    \begin{align*}
        P'(t) =   3\cdot(t-\tfrac{1}{2})^2  \begin{pmatrix}  1 \\ 0 \\ 0 \end{pmatrix}, \; 
        P''(t) =  6\cdot(t-\tfrac{1}{2}) \begin{pmatrix}  1 \\ 0 \\ 0 \end{pmatrix}, \; 
        P'''(t) = 6 \cdot \begin{pmatrix}  1 \\ 0 \\ 0 \end{pmatrix}\\
    \end{align*}
    Then the function $[P(t)]: [0,1] \setminus \{\tfrac{1}{2}\} \rightarrow \RP^2$ is quasi continuous with quasi continuation $[\hat P(t)]: [0,1] \rightarrow \RP^2$
    \[
    [\hat P(t)] :=  
\begin{rcases}
    \begin{dcases}
[P(t)], &\text{ if } t \neq \tfrac{1}{2},\\
        [ (1,0,0)^T], &\text{ if } t = \tfrac{1}{2}
    \end{dcases}
  \end{rcases}
 = \begin{pmatrix} 1 \\ 0\\ 0 \end{pmatrix} \; \forall t \in [0,1].
    \]
    One can find an illustration in \Cref{sing_basics:para_line_fig}.
    \begin{figure}
      \centering 
\resizebox{0.9\linewidth}{!}{
    \begin{tikzpicture}
    \draw[thick] (-0.3,0) -- (11,0);
    
    \coordinate (inf) at (10,0);
    \fill (inf) circle [radius=2pt];
    \node at (inf) [below = 2mm] {$(1,0,0)^T$};
    \node at (inf) [below = 8mm] {$P(t) = a \wedge b = ((t-\frac 1 2)^3, 0,0)^T$};
    
     \coordinate (t1) at (9,2.5);
     \node at (t1) [above = 0mm] {${t=0}$};
     \draw [dashed,shorten <=0.1cm, shorten >=0.1cm] (t1) -- (7,2);
    
     \coordinate (t2) at (10.5,1.5);
     \node at (t2) [above = 0mm] {${t=\frac 1 4}$};
     \draw [dashed,shorten <=0.1cm, shorten >=0.1cm] (t2) -- (8,1);
    
     \coordinate (t3) at (12,0.5);
     \node at (t3) [above = 0mm] {\color{red}${t=\frac 1 2}$};
     \draw [dashed,shorten <=0.1cm, shorten >=0.1cm] (t3) -- (8,0);
    
     \coordinate (a) at (1.5,-0.75);
     \node at (a) [below = 0mm] {${a= (0,1,0)^T}$};
     \draw [shorten <=0.1cm, shorten >=0.1cm] (a) -- (3,0);
    
     \coordinate (b) at (5,3);
     \node at (b) [below = 0mm] {${b(t)= (0,1,(t-\frac 1 2)^3)^T}$};
    
    \draw (-0.3,2) -- (7,2);
    \draw (7,2) to[out=0,in=135] (inf);
    \draw (-0.3,1) -- (7,1);
    \draw (7,1) to[out=0,in=135] (inf);
    \end{tikzpicture} 
}
\caption{Intersection of two lines with a removable singularity for $t = \frac 1 2$.}
\label{sing_basics:para_line_fig}
    \end{figure}
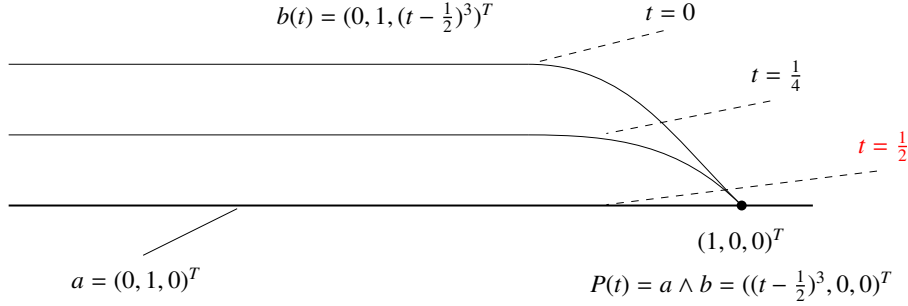
\end{example}
\begin{example}[Rotating Line]
        Consider the lines $a: = (1,1,0)^T$ and $b(t):= (1, -\frac{2}{\pi^2} \cdot(\sin(\pi\cdot t) -1), 0)^T$ for $t \in [0,1]$. Their point of intersection is $P(t):= a \wedge b = (0,0,  -\frac{2}{\pi^2} \cdot(\sin(\pi\cdot t) -1))^T$ with a removable singularity at $t = \tfrac 1 2$. 


        Define $f(t):=  -\frac{2}{\pi^2}( \sin(\pi\cdot t) -1)$ then we can expand the series around $t = \tfrac 1 2$: 
        \[
                f(t) = (t - \tfrac{1}{2})^2 + O((t - \tfrac{1}{2})^4)
        \]
        When we apply \Cref{sing_basics:ex_path} and normalize the term $P(t)$ by its $z$--component, we find a removable singularity in the classical complex analysis setting: we divide $f(t)$ by itself. The term $\tfrac f f$ has a singularity at $t=\tfrac 1 2$. We can apply complex de L'Hospital twice, either to the series expansion or the terms itself, which both removes the singularity:  $\tfrac{f''(\tfrac 1 2)}{f''(\tfrac 1 2)} = \tfrac 2 2 = 1$. Assembling the vector back together we can continuously extend $P$ at $t = \tfrac 1 2$ with $(0,0,1)^T$.

       More directly we can apply \Cref{sing:kth_deriv} to the components and derive the function:
        \[
                P'(\tfrac 1 2) = (0,0,-\tfrac{2}{\pi}\cdot \cos(\tfrac{\pi}{2}))^T = (0,0,0)^T, \quad P''(\tfrac 1 2) = (0,0,2\cdot \sin(\tfrac{\pi}{2})) \sim (0,0,1)^T.
        \]
        Both ways allow to define a $C^0$--continuation $[\hat P(t) ]$ for $[P(t)]$ at $t = \frac 1 2$:
        \[
            [\hat P(\tfrac 1 2)] = (0,0,1)^T.
    \]
\end{example}
\begin{example}[Midpoint of a Circle]
    Let $A = (-1,0,1)^T, B = (1,0,1)^T$ and $C = (0,1,1)^T$ be points in $\RP^2$. Define another point $C' =
    (0,0,1)^T$. Then we can define two circles (interpreted as conic section) being incident to 
    $A,B,C$ or $A,B,C'$ respectively, and call their associated conic matrices $X$ and $Y$. Spelling that out yields:

    \[
    X = \begin{pmatrix} 1&0&0 \\ 0&1&0 \\ 0&0&-1 \end{pmatrix} \qquad 
    Y = \begin{pmatrix} 0&0&0 \\ 0&0&1 \\ 0&1&0 \end{pmatrix}.
    \]
    Since $A,B$ and $C'$ are collinear the circle $\mathcal C_Y$ will degenerate to a line.

    Consider now the linear combination, $f(t):= (1-t)\cdot X + t\cdot Y$ for $t \in [0,2]$:
    this describes the unit circle for $t=0$, a line for $t=1$ and a 
    linear interpolation of the circle and the line (which is still a circle) for $t \in (0,1)$.
    If we calculate the center $M(t)$ of the conic given by $\mathcal C_{f(t)}$ one finds the following:
    \[
        M(t) = \begin{pmatrix}
            0\\(t-1)t\\ (t-1)^2
        \end{pmatrix} 
        = (t-1) \begin{pmatrix}
            0\\t\\ (t-1)
        \end{pmatrix} 
    \]
    which yields $(0,0,0)^T$ for $t=1$, which means that there is a (removable) singularity.
    We can again apply \Cref{sing:kth_deriv} and take the derivatives of $M(t)$: if one takes the first
    derivative, one recovers the correct solution even for $t=1$ which is
    $(0,1,0)^T$ the far point of the $y$ axis. This can of course be again interpreted as a continuous movement with a $C^0$--continuation at $t=1$. See \Cref{sing:circle_at_inf} for a picture. 
    \begin{figure}
        \centering
        \begin{tikzpicture}[baseline]
\coordinate (A) at (-1,0);
\fill (A) circle [radius=2pt];
\node at (A) [above = 2mm] {$\mathbf{A}$};

\coordinate (B) at (1,0);
\fill (B) circle [radius=2pt];
\node at (B) [above = 2mm] {$\mathbf{B}$};

\coordinate (C) at (0,0.5);
\fill (C) circle [radius=2pt];
\node at (C) [above = 2mm] {$\mathbf{C}$};

\coordinate (M) at (0,-0.75);
\fill (M) circle [radius=2pt];
\node at (M) [right = 2mm] {$\mathbf{M}$};

\draw [->] (C) -- (0,0.1);
\draw [->] (M) -- (0,-1.75);

\draw (0,-0.75) circle (1.25cm);
\end{tikzpicture}
\qquad
\qquad
\qquad
\qquad
\qquad
\qquad
\begin{tikzpicture}[baseline]
\coordinate (A) at (-1,0);
\fill (A) circle [radius=2pt];
\node at (A) [above = 2mm] {$\mathbf{A}$};

\coordinate (B) at (1,0);
\fill (B) circle [radius=2pt];
\node at (B) [above = 2mm] {$\mathbf{B}$};

\coordinate (C) at (0,0);
\fill (C) circle [radius=2pt];
\node at (C) [above = 2mm] {$\mathbf{C}$};

\coordinate (M) at (0,-3);
\fill[opacity=0.5] (M) circle [radius=2pt];
\node at (M) [below = 2mm,opacity=0.5] {$\mathbf{M}$};

\draw [shorten >=-2cm,shorten <=-2cm ] (-1,-3) -- (1,-3);
\coordinate (linf) at (3,-3.5);
\draw[line width=0.05mm] (linf) -- (2,-3.1);
\node at (linf) [below = 0.1mm,right=0.1mm] {$l_\infty$};

\draw [shorten >=-2cm,shorten <=-2cm ] (A) -- (B);

\end{tikzpicture}
\caption{\textbf{Degenerate Midpoint:} moving $C$ to the connecting line of $A$ and $B$ yields a removable singularity along a fixed path.}
       \label{sing:circle_at_inf}
    \end{figure}
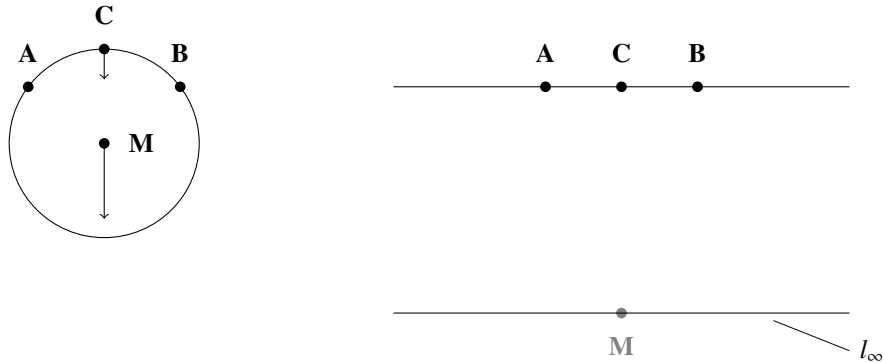
\end{example}
\begin{remark}
    For a practical implementation one could use so called ``automatic differentiation''. This combines the benefits of numerical and symbolical differentiation: fast execution and stability. We refer the reader to \cite{griewank2008evaluating}.
\end{remark}
\section{The Radical of all Evil}
In the last section we investigated the role of derivatives in the process of removing singularities. The basic idea was an unpretentious one: singularities of the form $0/0$ can be resolved using de L'Hospital's rule. All the theorems had one prerequisite: the analyticity of the functions around $t_0$. In geometry, a lot of constructions are accessible using only ruler constructions. These can be carried out analytic everywhere, but not without reason the classic constructions are often called ``ruler and compass constructions''. And the utilization of a compass means that algebraically one has to admit radical expressions (\cite{martin2012geometric}, \page 35). The complex radical functions is intrinsically monodromic and has a branch point at $0$ (and one at $\infty$). But more down to earth: complex radical functions cannot be defined analytic at branch points, which is the reason why \Cref{sing_basics:removal_ana} or \Cref{sing:kth_deriv} cannot be applied, since analyticity was a crucial premiss for these. Eventually this leads to the beautiful theory of Riemann surfaces. 

We will illustrate this with an example taken from \cite{kortenkamp1999foundations}.
\begin{example}[Disjoint Circle Intersection]
    \label{sing:expl_sqrt}
    \begin{figure}
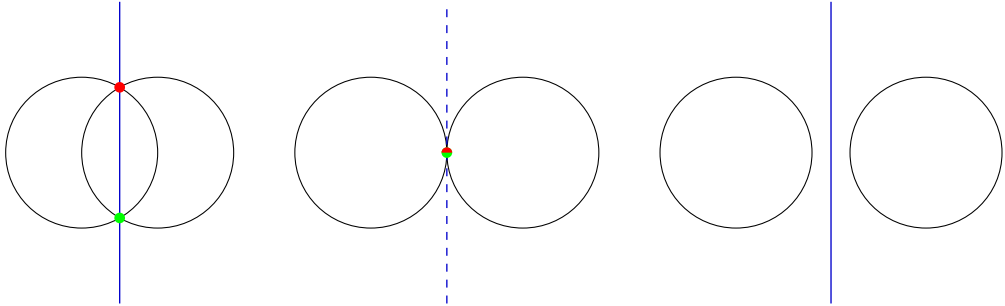

       \drawMcirc
       \caption{\textbf{Disjoint Circle Intersection:} The connecting line of the intersection of two circles. The line is undefined for the tangential situation in the middle.}
       \label{sing:circle_fig}
    \end{figure}
    As circles are conic sections, in general two of them have four points of intersection. It is well known that two points are always incident to all circles in $\CP^2$: namely the points $\II = (-\ii, 1,0)^T$ and $\JJ = (\ii, 1, 0)^T$. We will neglect these two and concentrate on the remaining two points, which are usually considered in Euclidean geometry. It might also occur that the points of intersection become complex. The details and an algorithm how to implement the intersection can be found in \cite{richter2011perspectives} \page 196 ff..

    Usually these two ``other points'' (\ie not being $\II$ or $\JJ$) of intersection are disjoint, but there are two exceptions: if the two circles are the same and if the two circles are tangential. 

    We will now analyze the latter: the basic idea is to reduce the problem of intersecting two circles to the task of intersection a line and a conic. Intersecting a conic and a line involves using a square root operation,
    since essentially we are solving a quadratic equation (\cite{richter2011perspectives} \page 194 ff.). In the tangential situation the two points of intersection will merge to one. Seen from the point of view of quadratic equations: the discriminant in the solution of the quadratic expression vanishes, leaving only one solution. 

%
    Now if one examines the connecting line of those two merging points the operation will be inherently undefined. But actually the singularity is removable: if one considers a limit process one can assign a unique continuous extension of the function. We will now examine the situation along a fixed path.

    Given two circles $C,D$ with radius 1 and centers $M_C = (0,0,1)^T$ and $M_D(t):= (t,0,1)^T$ for $t \in (0,2]$. $D$ is moving along the $x$-axis and one obtains two
    points of intersection of $C$ and $D$, call them $A$ and $B$:
    \[
        A = \begin{pmatrix}  t \\ \sqrt{1-t^2} \\ 1 \end{pmatrix}, \quad
        B = \begin{pmatrix}  t \\ -\sqrt{1-t^2} \\ 1 \end{pmatrix},
    \]
    We will consider the connecting line of $A$ and $B$ and call this line $l$:
    \[
        l(t) =  \begin{pmatrix}  2\sqrt{1-t^2} \\ 0  \\ 2t \sqrt{1-t^2}
        \end{pmatrix} = (2\sqrt{1-t^2} )\begin{pmatrix}1   \\ 0  \\ t
        \end{pmatrix} \sim  \sqrt{1-t^2} \begin{pmatrix}1   \\ 0  \\ t  \end{pmatrix}
    \]
    A drawing of the situation can be found in \Cref{sing:circle_fig}. The connecting line $l$ is well defined for all $t$ expect for $t=1$:
    there $l(t)$ has a singularity (and also at $t = -1$, which is not part of the domain of the function). The theory we developed so far would suggest to apply de L'Hospital's rule or similar techniques based on derivatives. So if we apply 
    \Cref{sing:kth_deriv} we find the following: 
    \begin{align*}
        l'(t) &=  \begin{pmatrix} \frac{-t}{\sqrt{1-t^2}}   \\ 0  \\
            \frac{1-2t^2}{\sqrt{1-t^2}} \end{pmatrix} =
        \frac{1}{\sqrt{1-t^2}}\begin{pmatrix} -t \\ 0 \\ 1-2t^2 \end{pmatrix}\\
        l''(t) &= \begin{pmatrix} \frac{-1}{(1-t^2)^\frac{3}{2}}   \\ 0  \\
            \frac{t(2t^2-3)}{(1-t^2)^\frac{3}{2}} \end{pmatrix} = 
        \frac{1}{(1-t^2)^\frac{3}{2}} \begin{pmatrix}  -1 \\ 0 \\ t(2t^2-3)\\
        \end{pmatrix}\\
        l'''(t) &=  \dots
    \end{align*}
    We run into the same singularity over and over again. This is of course not remarkable at all since the derivates of the complex square radical have an essential singularity at $0$.
\end{example}
\begin{remark}
    Note that we excluded $t=0$ in at which the two circles would merge. We will analyze the difference later on and argue that there are situations where even the merging of two circles can yield a meaningful connecting line but only for a given path. But the solution is \textbf{only} meaningful along this path and behaves discontinuous over spacial perturbations. 
\end{remark}
So we can see that derivatives cannot resolve the singularity. We have to come up with different techniques to handle the situation. As mentioned before the singularities are isolated and so a slight perturbation of the function would lead to a result in proximity to the desired solution. Unfortunately, simply perturbing the construction by a real number will introduce numerical error that, depending on the function, can grow very quickly. Hence, it would be beneficial if the perturbations were smaller than every real number, \ie infinitesimal. Therefore, we decided to integrate a non-Archimedean field, the so called hyperreal and hypercomplex numbers, into (dynamic) projective geometry. These fields are well known from the theory of non-standard analysis and contain infinitesimal and unlimited numbers.

\section{Non-standard Analysis}
\label{chap:nsa}
An important step towards rigorous treatment of infinitesimal quantities was the article ``\textsc{Eine Erweiterung der Infinitesimalrechnung}'': \cite{schmieden1958erweiterung}. Their construction had the drawback that it wasn't actually a field and therefore contains elements with no multiplicative inverse. Nevertheless, this article was a huge step towards the formal treatment of the infinitesimals.

Eventually a thorough treatment of non-standard analysis was given by \cite{robinson1961non} which overcame the drawbacks of the ansatz by Laugwitz and Schmieden and constructed a proper field with infinitesimal and unlimited members.

We will not admit how to construct the hyperreal or hypercomplex numbers and refer to the literature: \cite{goldblatt,robinson1961non}.

\begin{notation}[$\Rs$ and $\Cs$, \cite{goldblatt} \page 25]
  We denote by $\Rs$ and $\Cs$ the \textit{hyperreal} and the \textit{hypercomplex} numbers.
\end{notation}

\begin{notation}[Enlargement, \cite{goldblatt} \page 28]
  For a set $A \subset \R$ or $A \subset \C$ we denote by $\mathit{\As}$ the \textit{enlargement of $\mathit{A}$}. 
\end{notation}
\begin{remark}
  The enlargement operation adds all non-standard members to a set (\eg infinitesimal and unlimited numbers). We will not got into detail here and refer to \cite{goldblatt}.
\end{remark}
  \begin{intuition}[Extended Function, \cite{goldblatt} \page 30]
    Let $f: A \subset \C \rightarrow \C$ be a function. We denote by $f^*: \As \rightarrow \Cs$ the \textit{extended function $\mathit{f}$}. It holds true that $f(z) = f^*(z) \; \forall z \in A$.
  \end{intuition}
  \begin{intuition}[Universal Transfer, \cite{goldblatt} \page 45]
    If a property holds true for all real (complex) numbers, then it holds true for all hyperreal (hypercomplex) numbers. 
  \end{intuition}
  \begin{theorem}[Existential Transfer, \cite{goldblatt} \page 45]
    If there exists a hyperreal (hypercomplex) number satisfying a certain property, then there exists a real (complex) number with this property.
  \end{theorem}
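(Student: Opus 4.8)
The plan is to derive Existential Transfer as the contrapositive of the already-stated Universal Transfer. Suppose $\zeta \in \Cs$ (respectively $\zeta \in \Rs$) satisfies the property $P$ in question, and assume for contradiction that \emph{no} complex (real) number satisfies $P$. Then ``$\forall z\colon \neg P(z)$'' is a true statement about $\C$ (respectively $\R$).

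The next step is to apply Universal Transfer to the property $\neg P$: since every complex (real) number satisfies $\neg P$, every hypercomplex (hyperreal) number satisfies $\neg P$. Instantiating $z := \zeta$ yields $\neg P(\zeta)$, contradicting the choice of $\zeta$. Hence some complex (real) number satisfies $P$, which is the assertion.

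The step genuinely requiring care — and the main obstacle if one wants more than the informal argument natural to the paper's register — is the passage from ``$P$ transferable'' to ``$\neg P$ transferable, with $*$-transform equal to the negation of the $*$-transform of $P$'', together with the fact that the transfer map respects quantifiers. Universal Transfer as stated is only a one-way implication (truth over $\R$ or $\C$ forces truth over $\Rs$ or $\Cs$); the contrapositive argument tacitly uses that the transfer map on first-order sentences is a genuine logical equivalence commuting with $\neg$ and with $\forall$/$\exists$ — that is, the full Transfer Principle (the theorem of \L o\'s) underlying the construction of $\Rs$ and $\Cs$. In a careful write-up I would cite this from \cite{goldblatt,robinson1961non}. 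An even more direct route, if one is willing to look inside an ultrapower model $\Cs = \C^{\N}/\mathcal U$: writing $\zeta = [(a_n)_n]$, if $\zeta$ satisfies the first-order property $P$ then by \L o\'s's theorem the set $\{\, n \in \N : P(a_n) \,\}$ lies in the ultrafilter $\mathcal U$, hence is nonempty, so any index $n$ in it furnishes a \emph{standard} number $a_n$ with $P(a_n)$, giving the conclusion without passing through a contradiction.
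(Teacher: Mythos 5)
The paper does not actually prove this statement: it is quoted verbatim from the literature (\cite{goldblatt}, p.~45), and the surrounding remark explicitly declines to formalize the transfer principle, so there is no in-paper argument to compare yours against. Judged on its own, your proof is correct and is essentially the standard derivation one finds in \cite{goldblatt}. Your contrapositive route is sound, and you correctly identify the one genuine subtlety: the Universal Transfer statement as given is a one-way implication about a fixed property, so to conclude $\neg P^{*}(\zeta)$ from the truth of $\forall z\in\C\,\neg P(z)$ you need the $*$-transform to commute with negation and quantifiers, i.e.\ the full transfer principle (\L o\'s's theorem) rather than the informal ``intuition'' the paper records; citing \cite{goldblatt,robinson1961non} for that is exactly the right move, since at the paper's level of rigor the step cannot be made more precise anyway. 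Your second, ultrapower argument ($\zeta=[(a_n)_n]$, so $\{n: P(a_n)\}\in\mathcal U$ is nonempty) is also valid and has the merit of being constructive in spirit and of making explicit the restriction that matters here: $P$ must be a first-order (internal) property, since otherwise the statement is false --- ``$x$ is a positive infinitesimal'' is satisfied by a hyperreal but by no real. Both the paper's informal statement and your contrapositive version quietly rely on that restriction, so your explicit mention of ``first-order'' in the \L o\'s argument is a point in its favor rather than a defect.
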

  \begin{remark}
    We will not give a formal definition of the transfer principle and the star transform (transferring a logical statement from the non-standard world forth and back) and refer to \cite{goldblatt}.
  \end{remark}
\begin{definition}[$\Ks$ sets, partially \cite{goldblatt} \page 50]
    \begin{align*}
    \mathbb{I}_\R &:= \{\epsilon \in \Rs: \, |\epsilon| < |r| \quad \forall r \in \R \},\quad \textit{real infinitesimal numbers}\\
    \mathbb{I}_\C &:= \{\epsilon \in \Cs: \, |\epsilon| < |r| \quad \forall r \in \R \},\quad \textit{complex infinitesimal numbers}\\
        \mathbb{A}_\R&:= \{r^* \in \Rs: \exists n \in \N: \frac{1}{n} < |r^*| <
n\},\quad \textit{real appreciable numbers}\\
        \mathbb{A}_\C&:= \{r^* \in \Cs: \exists n \in \N: \frac{1}{n} < |r^*| <
n\},\quad \textit{complex appreciable numbers}\\
        \mathbb{R_\infty^+}&:= \{H \in \Rs: \, H > r \quad \forall r \in \R
\},\quad \textit{positive unlimited numbers}\\
        \mathbb{R_\infty^-}&:= \{H \in \Rs: \, H < r \quad \forall r \in \R
        \},\quad \textit{negative unlimited numbers}\\
        \mathbb{R_\infty}&:= \R_\infty^+ \cup \R_\infty^- ,\quad \textit{real unlimited numbers}\\
	\mathbb{C_\infty}&:= \Cs \setminus \{ \mathbb{I}_\C \cup \mathbb{A}_\C \}  ,\quad \textit{complex unlimited numbers}\\
        \mathbb{L}_\R&:= \Rs \setminus \R_\infty    ,\quad \textit{real limited numbers}\\
        \mathbb{L}_\C&:= \Cs \setminus \C_\infty    ,\quad \textit{complex limited numbers}
\end{align*}
\end{definition}
\begin{remark}
  We will drop the index $\R$ or $\C$ of $\mathbb{I}_\R, \mathbb{I}_\C, \mathbb{A}_\R,  \mathbb{L}_\R$ or $ \mathbb{L}_\C$ if the context admits. 
\end{remark}
\begin{definition}[Infinitely Close and Limited Distance, partially \cite{goldblatt} \page 52]
    Define for $b,c \in \Ks$ the equivalence
    relation \[ b \simeq c \text{ if and only if }, b-c \in \mathbb I \] and we
    call $b$ is \textit{infinitely close} to $c$. Furthermore, we write\[b
        \sim c \text{ if and only if },  b-c \in \mathbb L \]
        and we say $b$ has \textit{limited distance} to $c$.
\end{definition}

\begin{definition}[Halo and Galaxy, partially \cite{goldblatt} \page 52]
     Define the \textit{halo} of an arbitrary $b \in \Ks$ as 
    \[
        \hal(b) := \{c \in \Ks : b \simeq c\}
    \]
    and the \textit{galaxy} of $b \in \Ks$ as 
    \[
        \gal(b) := \{c \in \Ks : b \sim c\}.
    \]
\end{definition}
\begin{theorem}[Shadow]
    \label{nsa_basics:shadow}
    Every limited hyperreal (hypercomplex) $z^*$ is infinitely close to exactly one real (complex) number. We
    call this the \textit{shadow} of $z^*$ denoted by $\sh(z^*)$.
\end{theorem}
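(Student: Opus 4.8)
The plan is to handle the hyperreal statement first and then obtain the hypercomplex one by splitting into real and imaginary parts. For existence in the hyperreal case, the key idea is to exploit the completeness of the \emph{standard} reals. Given a limited $z^* \in \Lim_\R$, form the set of standard reals lying below it,
\[
    A := \{ r \in \R : r < z^* \}.
\]
Since $z^*$ is limited, it is neither positively nor negatively unlimited, so $A$ is non-empty (some real lies below $z^*$) and bounded above in $\R$ (some real lies above $z^*$). Hence $s := \sup A$ exists in $\R$ by the least-upper-bound property, and $s$ is the candidate shadow.

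Next I would verify $z^* \simeq s$, i.e.\ $z^* - s \in \I_\R$. Suppose not; then $\abs{z^* - s} > r$ for some real $r > 0$. If $z^* - s > r$, then $s + \tfrac r2 \in A$ while $s + \tfrac r2 > s$, contradicting that $s$ is an upper bound of $A$. If $z^* - s < -r$, then $s - \tfrac r2$ is an upper bound of $A$ strictly below $s$, contradicting that $s$ is the \emph{least} upper bound. Either way we reach a contradiction, so $z^* - s$ is infinitesimal. For uniqueness, suppose $z^* \simeq s$ and $z^* \simeq s'$ with $s, s' \in \R$; then $s - s' = (s - z^*) + (z^* - s')$ is a sum of two infinitesimals, hence infinitesimal, but the only real infinitesimal is $0$, so $s = s'$. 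This proves the real statement and legitimises the notation $\sh(z^*)$.

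For the hypercomplex case, write $z^* = x^* + \ii\, y^*$ with $x^*, y^* \in \Rs$. From $\abs{x^*} \le \abs{z^*}$ and $\abs{y^*} \le \abs{z^*}$ (a universal transfer of $\abs{a + \ii b} = \sqrt{a^2 + b^2}$) we get that $x^*, y^*$ are limited, so $\sh(x^*), \sh(y^*)$ exist by the real case; set $\sh(z^*) := \sh(x^*) + \ii\, \sh(y^*) \in \C$. Then $z^* - \sh(z^*) = (x^* - \sh(x^*)) + \ii (y^* - \sh(y^*))$ has modulus at most $\abs{x^* - \sh(x^*)} + \abs{y^* - \sh(y^*)}$, a sum of infinitesimals, so $z^* \simeq \sh(z^*)$; uniqueness follows as before, since a complex number infinitely close to $0$ must be $0$.

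The main obstacle is a single subtle point that is easy to overlook: completeness of $\R$ is a second-order property and is \emph{not} inherited by $\Rs$, so the supremum defining the shadow must be taken in the standard reals, not in the hyperreals — one cannot simply transfer "every bounded set has a supremum". Once this is correctly arranged, the rest is elementary manipulation of the order relation and of the definition of $\simeq$.
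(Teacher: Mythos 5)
Your proof is correct and follows essentially the same route as the paper: a least-upper-bound (supremum over standard reals below the number) construction for the real case, then a componentwise treatment of real and imaginary parts combined with a triangle-inequality estimate and the fact that two standard numbers that are infinitely close must be equal. The only cosmetic difference is that you prove the real case explicitly and then invoke it twice, whereas the paper cites Goldblatt for the real case and repeats the same supremum construction inline for both components.
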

\begin{proof}
    The real case is proven in \cite{goldblatt} \page 53. We will prove the complex case using the constructions of the real case for both real and imaginary part. Then we use an estimate to conclude.

    Write $z^* = a + \ii \cdot b$, then we have $a,b \in \Lim_\R$ (otherwise $z^*$ would not be limited). Define the following sets:
    \[
        A:=\{r \in \R \bbar r < a\}, \quad B:=\{r \in \R \bbar r < b\}.
    \]
    And set $\alpha:= \sup A$ and $\beta:=\sup B$. By the completeness of $\R$ we know that $\alpha, \beta \in \R$. Let $z:= \alpha + \ii \cdot\beta$. We first show that $z^* - z \in \Lim_\C$: take any $\epsilon > 0$. Since $\alpha$ is an upper bound of $A$ we know $\alpha + \epsilon \notin A \Rightarrow a \leq \alpha + \epsilon$. Furthermore, $\alpha - \epsilon < a$, hence otherwise we have $a \leq \alpha - \epsilon$ which would be a lower upper bound of $A$ which is a contradiction to the construction of $\alpha$. So we have 
    \[
        \alpha - \epsilon < a \leq \alpha + \epsilon \Leftrightarrow |a-\alpha| \leq \epsilon.
    \]
We can argue the same way for $b$ and have $|b-\beta| \leq \epsilon'$ for some $\epsilon' >0$.
Finally, we can conclude 
\[
    |z^*-z| = \sqrt{(a-\alpha)^2 + (b-\beta)^2} \leq |a-\alpha| + |b-\beta| \leq \epsilon + \epsilon' =: \hat \epsilon,
\]
where we used that $\norm{x}_1 \geq \norm{x}_2 \forall x \in \R$ and by universal transfer also for all $x \in \Rs$.
Since this holds for all $\hat \epsilon > 0$ we know that $z^*$ and $z$ are infinitesimal close.

We still have to show the uniqueness of $z$. Assume there is another $z' \in \C$ with the same property. Then $z^* \simeq z'$ and therefore $z \simeq z'$. Since both $z$ and $z'$ are complex numbers this means $z=z'$.
\end{proof}

\begin{lemma}[Complex Shadow]
    \label{nsa_basics:complex_shadow}
   For $z \in \Lim_\C$ and $z = a + \ii \cdot b$ we have 
   \[
       \sh(z) = \sh(a) + \ii \cdot \sh(b)
   \]
\end{lemma}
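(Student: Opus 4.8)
The plan is to read the result off the construction already used in the proof of \Cref{nsa_basics:shadow}. First I would recall the opening observation of that proof: writing $z = a + \ii b$, membership $z \in \Lim_\C$ forces $a, b \in \Lim_\R$. Hence $\sh(a)$ and $\sh(b)$ are well-defined real numbers, and I set $x := \sh(a)$, $y := \sh(b)$, so that $a - x \in \mathbb{I}_\R$ and $b - y \in \mathbb{I}_\R$ by definition of the real shadow.

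Next I would show $z \simeq x + \ii y$. We have $z - (x + \ii y) = (a - x) + \ii (b - y)$, and therefore
\[
    \abs{z - (x + \ii y)} = \sqrt{(a-x)^2 + (b-y)^2} \le \abs{a - x} + \abs{b - y},
\]
where the estimate uses $\norm{\cdot}_2 \le \norm{\cdot}_1$ on $\R^2$, transferred to $\Rs^2$ by universal transfer, exactly as in the proof of \Cref{nsa_basics:shadow}. The right-hand side is a sum of two infinitesimals and hence infinitesimal, so $z - (x + \ii y) \in \mathbb{I}_\C$, i.e. $z \simeq x + \ii y$.

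Finally, since $x + \ii y \in \C$ and $z$ is limited, the uniqueness clause of \Cref{nsa_basics:shadow} forces $\sh(z) = x + \ii y = \sh(a) + \ii \sh(b)$, which is the claim.

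I do not expect a genuine obstacle here: the only ingredients are closure of the infinitesimals under addition (equivalently, the triangle-inequality estimate above), and the uniqueness part of the Shadow theorem. If one prefers to bypass even the estimate, one can simply observe that the complex number $\alpha + \ii\beta$ produced in the proof of \Cref{nsa_basics:shadow} already satisfies $\alpha = \sh(a)$ and $\beta = \sh(b)$, since $\alpha$ and $\beta$ there are defined as the suprema $\sup\{r \in \R : r < a\}$ and $\sup\{r \in \R : r < b\}$, which is precisely the real-shadow construction applied to $a$ and $b$; then the lemma is literally what that proof output.
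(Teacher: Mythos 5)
Your proposal is correct and follows essentially the same route as the paper, whose proof is simply the remark that the claim is a direct consequence of the construction in the proof of \Cref{nsa_basics:shadow}; your closing observation that the suprema $\alpha,\beta$ there are exactly the real shadows of $a$ and $b$ is precisely that argument, and your uniqueness-based variant is a harmless elaboration of the same idea.
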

\begin{proof}
    Direct consequence of the construction in the proof of \Cref{nsa_basics:shadow}.
\end{proof}
\begin{lemma}[Complex shadow and conjugation]
    \label{nsa_basics:conj_shadow}
    Let $z \in \Lim$, then it holds true that $\sh(\conj{z}) = \conj{\sh(z)}$.
\end{lemma}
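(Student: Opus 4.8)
The plan is to reduce the complex statement to the already-established real case by splitting $z$ into real and imaginary parts and invoking \Cref{nsa_basics:complex_shadow}. First I would write $z = a + \ii \cdot b$ with $a, b \in \Lim_\R$; this is legitimate because $z$ being limited forces both $a$ and $b$ to be limited, exactly as in the proof of \Cref{nsa_basics:shadow}. Then $\conj{z} = a + \ii \cdot (-b)$, and $-b \in \Lim_\R$ as well, so \Cref{nsa_basics:complex_shadow} applies to both $z$ and $\conj{z}$, yielding
\[
    \sh(z) = \sh(a) + \ii \cdot \sh(b), \qquad \sh(\conj{z}) = \sh(a) + \ii \cdot \sh(-b).
\]

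Next I would record the auxiliary identity $\sh(-b) = -\sh(b)$ for $b \in \Lim_\R$. This follows directly from the defining property of the real shadow: since $b - \sh(b) \in \I_\R$ and the infinitesimals are closed under negation, we get $(-b) - (-\sh(b)) \in \I_\R$, i.e. $-b \simeq -\sh(b)$; as $-\sh(b) \in \R$, the uniqueness part of \Cref{nsa_basics:shadow} forces $\sh(-b) = -\sh(b)$.

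Combining the second display with this identity gives
\[
    \sh(\conj{z}) = \sh(a) - \ii \cdot \sh(b) = \conj{\sh(a) + \ii \cdot \sh(b)} = \conj{\sh(z)},
\]
which is the claim.

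I do not expect a genuine obstacle here: the statement is a routine consequence of \Cref{nsa_basics:complex_shadow}. The only step that deserves an explicit word is the auxiliary identity $\sh(-b) = -\sh(b)$, and even that reduces to the negation-closure of $\I_\R$ plus uniqueness of the real shadow; everything else is bookkeeping with the real/imaginary decomposition.
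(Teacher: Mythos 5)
Your argument is correct and follows essentially the same route as the paper, which simply applies \Cref{nsa_basics:complex_shadow}; you merely spell out the real/imaginary bookkeeping and the auxiliary identity $\sh(-b) = -\sh(b)$, which is fine and indeed follows from negation-closure of $\I_\R$ plus uniqueness in \Cref{nsa_basics:shadow}. No gaps.
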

\begin{proof}
    Apply \Cref{nsa_basics:complex_shadow}. 
\end{proof}
\begin{definition}[Almost real]
    We call a number $z \in \Cs$ \textit{almost real}, if $z$ is infinitely close to a real number. That means there exists a $r \in \R : z \simeq r$. 
\end{definition}
\begin{lemma}[Shadow properties]
    \label{nsa_basics:sh_prop}
Let $a,b \in \mathbb L$ and $n \in \mathbb \N$ then
\begin{enumerate}
        \item $\sh(a \pm b) = \sh(a) \pm \sh(b)$
        \item $\sh(a \cdot b) = \sh(a) \cdot \sh(b)$
        \item $\sh(\frac{a}{b}) = \frac{\sh(a)}{\sh(b)}, \text{ if }\sh(b) \neq
            0$
        \item $\sh(b^n )= \sh(b)^n$
        \item $\sh(|b|)= |\sh(b)|$
        \item for $a,b \in \Lim_\R:$ $\text{if } a \leq b \text{ then } \sh(a) \leq \sh(b)$
    \end{enumerate}
\end{lemma}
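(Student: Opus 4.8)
The plan is to exploit the defining property of the shadow established in \Cref{nsa_basics:shadow}: every limited number $x$ can be written uniquely as $x = \sh(x) + \varepsilon$ with $\sh(x)$ standard and $\varepsilon \simeq 0$, and, conversely, if $x \simeq s$ for a standard $s$ then $\sh(x) = s$. Hence for each identity it suffices to name the candidate standard value and check that it is infinitely close to the argument on the left-hand side. First I would record the two elementary closure facts that make this work: the sum of two infinitesimals is infinitesimal, and the product of a limited number with an infinitesimal is infinitesimal. Both follow immediately from the definitions of $\I$ and $\Lim$ together with universal transfer of the triangle inequality $|x+y| \le |x|+|y|$ and of $|xy| = |x|\,|y|$.

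For (1) and (2), write $a = \sh(a) + \varepsilon$ and $b = \sh(b) + \delta$ with $\varepsilon, \delta \in \I$. Then $a \pm b = (\sh(a) \pm \sh(b)) + (\varepsilon \pm \delta)$ and $\varepsilon \pm \delta \in \I$ by the first closure fact, so $a \pm b \simeq \sh(a) \pm \sh(b)$; since the latter is standard, (1) follows. For the product, $ab = \sh(a)\sh(b) + \big(\sh(a)\delta + \sh(b)\varepsilon + \varepsilon\delta\big)$, and each bracketed term is infinitesimal by the second closure fact (note $\sh(a), \sh(b)$ are limited, as is $\varepsilon$), hence $ab \simeq \sh(a)\sh(b)$ and (2) follows.

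For (3), I would first treat $1/b$ under the hypothesis $\sh(b) \neq 0$: then $b \not\simeq 0$, so $b$ is appreciable and $1/b$ is limited, and
\[
    \frac{1}{b} - \frac{1}{\sh(b)} = \frac{\sh(b) - b}{b \cdot \sh(b)}
\]
has infinitesimal numerator and appreciable (in particular non-infinitesimal) denominator, hence is infinitesimal; thus $\sh(1/b) = 1/\sh(b)$. Combining with (2) yields $\sh(a/b) = \sh(a)\,\sh(1/b) = \sh(a)/\sh(b)$. Statement (4) is then an immediate induction on $n$ using (2), with base case $\sh(b^0) = \sh(1) = 1$ (or $n=1$ trivial). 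For (5), write $b = \sh(b) + \varepsilon$; the reverse triangle inequality $\big|\,|b| - |\sh(b)|\,\big| \le |b - \sh(b)| = |\varepsilon|$, valid in $\C$ and hence in $\Cs$ by transfer, shows $|b| \simeq |\sh(b)|$, and since $|\sh(b)|$ is a standard real, $\sh(|b|) = |\sh(b)|$.

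For (6), argue by contradiction in $\Lim_\R$: if $a \le b$ but $\sh(a) > \sh(b)$, set $d := \sh(a) - \sh(b) > 0$, a positive real. Since $a - \sh(a)$ and $b - \sh(b)$ are infinitesimal, $|a - \sh(a)| < d/2$ and $|b - \sh(b)| < d/2$, whence $a > \sh(a) - d/2 = \sh(b) + d/2 > b$, contradicting $a \le b$. I do not expect a genuine obstacle here; the only points requiring care are making the two closure facts about $\I$ explicit and, in (5), confirming that the reverse triangle inequality transfers to the hypercomplex modulus, since these observations are load-bearing for every part.
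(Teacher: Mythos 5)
Your proof is correct, but it takes a different route from the paper. The paper disposes of the lemma in one line: it cites Goldblatt (p.~53--54) for the real case and reduces the complex case to the real one via \Cref{nsa_basics:complex_shadow}, i.e.\ $\sh(a+\ii b)=\sh(a)+\ii\,\sh(b)$, so that each identity follows by applying the known real-shadow rules to real and imaginary parts. You instead give a self-contained first-principles argument working directly in $\Cs$: decompose each limited number as $x=\sh(x)+\varepsilon$ with $\varepsilon\simeq 0$, record the closure facts ($\I+\I\subset\I$, $\Lim\cdot\I\subset\I$), and verify each identity by exhibiting the candidate standard value and checking infinitesimal proximity, invoking uniqueness of the shadow (\Cref{nsa_basics:shadow}) to conclude. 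Your treatment of (3) via $1/b$ with appreciable denominator, of (5) via the transferred reverse triangle inequality, and of (6) by contradiction with the real gap $d=\sh(a)-\sh(b)$ are all sound. What your approach buys is independence from the external reference and from the componentwise reduction — the argument never needs to split into real and imaginary parts, so it would work verbatim in any setting where the shadow decomposition and the ideal property of $\I$ in $\Lim$ hold (essentially \Cref{nsa_basics:iso}); what the paper's approach buys is brevity and reuse of already-established real results. One small presentational point: your closure facts and the identity $|xy|=|x|\,|y|$ for the hypercomplex modulus are exactly the kind of statements the paper obtains by universal transfer, so flagging that (as you do) is appropriate rather than optional.
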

\begin{proof}
    For the real case see \cite{goldblatt} \page 53-54. For the complex case we can reduce this to the real case by \Cref{nsa_basics:complex_shadow}.
\end{proof}

\begin{theorem}[{Isomorphism{, partially \cite{goldblatt} \page 54}}]
    \label{nsa_basics:iso}
    The quotient ring $\mathbb{L_\R}/\mathbb{I_\R} \; (\mathbb{L_\C}/\mathbb{I_\C})$ is isomorphic to the field of the
    real (complex) numbers by $\hal(b) \mapsto \sh(b)$. Therefore $\mathbb I$ is a
    maximal ideal of the ring $\mathbb L$.
\end{theorem}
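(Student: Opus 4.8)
The plan is to realize the quotient as the image of a surjective ring homomorphism whose kernel is exactly $\I$, and then invoke the first isomorphism theorem for rings. I treat the real and complex cases in parallel, writing $\mathbb{F}$ for $\R$ or $\C$ and $\Lim, \I$ for the corresponding sets of limited and infinitesimal numbers; throughout, $\Lim$ is commutative with $1$ since it inherits these from $\Ks$.

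First I would record the two algebraic facts that make the statement meaningful, stating them as a short preliminary observation rather than dwelling on them. Namely, $\Lim$ is a subring of $\Ks$ (a sum or product of limited numbers is limited, via the triangle inequality and universal transfer), and $\I$ is an ideal of $\Lim$: a sum of infinitesimals is infinitesimal, and the product of an infinitesimal $\epsilon$ with a limited $c$ is infinitesimal, since $\abs{\epsilon \cdot c} = \abs{\epsilon}\cdot\abs{c} < \abs{\epsilon}\cdot n$ for a natural bound $n$ on $\abs c$, and $\abs{\epsilon} n \in \I$.

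Next I would define $\Phi: \Lim \to \mathbb{F}$ by $\Phi(b) := \sh(b)$. This is well defined by \Cref{nsa_basics:shadow} (every limited number has a unique shadow), and it is a ring homomorphism by parts (1) and (2) of \Cref{nsa_basics:sh_prop}, together with $\Phi(1)=1$. It is surjective because every $r \in \mathbb{F}$ lies in $\Lim$ and satisfies $\sh(r)=r$ (as $r-r=0\in\I$). The kernel is $\ker\Phi = \{b\in\Lim : \sh(b)=0\}$, and $\sh(b)=0$ means precisely $b\simeq 0$, i.e. $b\in\I$; hence $\ker\Phi=\I$. The first isomorphism theorem then gives $\Lim/\I \cong \mathbb{F}$, and by construction the induced isomorphism sends the coset $b+\I = \hal(b)$ to $\sh(b)$, which is exactly the claimed map. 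Finally, since $\mathbb{F}$ is a field, $\Lim/\I$ is a field, which for a commutative ring with unity is equivalent to $\I$ being a maximal ideal of $\Lim$.

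I do not expect a serious obstacle; the substance is entirely contained in the cited shadow results. The only point needing a word of care is well-definedness of the map on the quotient — that $\hal(b)=\hal(b')$ forces $\sh(b)=\sh(b')$ — but this is immediate from the uniqueness clause of \Cref{nsa_basics:shadow}, and is in any case automatic once the isomorphism is extracted from $\Phi$. For the complex case one may alternatively avoid re-proving anything by reducing to the real case through \Cref{nsa_basics:complex_shadow}: writing $b=a+\ii\cdot c$ one has $\sh(b)=\sh(a)+\ii\cdot\sh(c)$, so the complex homomorphism is assembled coordinatewise from the real one, and the kernel computation and surjectivity carry over verbatim.
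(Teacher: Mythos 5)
Your proposal is correct, and it is in substance the same argument that the paper delegates to its citation: the paper's own ``proof'' consists of pointing to Goldblatt p.~54 for the real case and remarking that the complex case follows because $\Cs$ splits as $\Rs \times \Rs$, and Goldblatt's argument is precisely the one you spell out (the shadow map $\sh:\Lim\to\mathbb{F}$ is a surjective ring homomorphism with kernel $\I$, the first isomorphism theorem identifies $\Lim/\I$ with $\mathbb{F}$ via $\hal(b)=b+\I\mapsto\sh(b)$, and maximality of $\I$ follows since the quotient is a field). So what you add is a self-contained verification rather than a new route; it also handles the complex case directly through \Cref{nsa_basics:shadow} and parts (1)--(2) of \Cref{nsa_basics:sh_prop}, which the paper already states for complex limited numbers, and this is cleaner than the paper's terse $\Cs \simeq \Rs \times \Rs$ remark (which is only an additive/real-vector-space splitting, not a ring product). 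One small caution on your alternative ending: assembling the complex homomorphism ``coordinatewise'' from the real one gives additivity and the kernel computation immediately, but multiplicativity of the complex shadow does not literally come coordinate by coordinate (multiplication in $\C$ mixes the coordinates); you should instead appeal to \Cref{nsa_basics:sh_prop}(2), or expand $\sh((a+\ii c)(a'+\ii c'))$ using \Cref{nsa_basics:complex_shadow} and the real shadow rules — your main argument already does effectively this, so the proof stands.
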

\begin{proof}
 For the real case \cite{goldblatt} \page 54. For the complex case, note that $\Cs \simeq \Rs \times \Rs$ and apply the same arguments as for the real case.
\end{proof}
\begin{theorem}[{Continuity{, partially \cite{goldblatt} \page 75}}]
    \label{nsa_basics:conti}
The function $f: \C \rightarrow \C$ is continuous at $c\in \C$, if and only if $f(c) \simeq f(x)$ for all $x
    \in \Cs$ such that $x \simeq c$. In other words if and only if 
    \[
        f(\hal(c)) \subset \hal(f(c)).
    \]
\end{theorem}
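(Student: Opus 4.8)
The plan is to unfold the non-standard characterization directly from the familiar $\varepsilon$--$\delta$ definition of continuity, using the transfer principles. Throughout, one must keep in mind that when we write ``$f(x)$'' for a hypercomplex argument $x$ we really mean the extended function $f^*:\Cs \rightarrow \Cs$, so the statement to be proved is precisely $f^*(\hal(c)) \subset \hal(f(c))$.

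For the forward implication, assume $f$ is continuous at $c$ and fix an arbitrary real $\varepsilon > 0$. Continuity provides a real $\delta > 0$ such that $|z-c| < \delta \Rightarrow |f(z) - f(c)| < \varepsilon$ for every $z \in \C$. This is a property holding for all complex numbers, so by Universal Transfer the same implication holds for every $z \in \Cs$ with $f$ replaced by $f^*$. Now take any $x \simeq c$. Then $|x - c|$ is infinitesimal, hence in particular $|x-c| < \delta$ (because $\delta$ is a positive \emph{real}), so $|f^*(x) - f(c)| < \varepsilon$. Since $\varepsilon$ ranged over all positive reals, $|f^*(x) - f(c)|$ is infinitesimal, i.e.\ $f^*(x) \simeq f(c)$. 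Thus $f^*(\hal(c)) \subset \hal(f(c))$.

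For the converse I would argue by contraposition, so as to need only Universal Transfer (alternatively one applies Existential Transfer to an ``$\exists \delta\,\forall z$'' sentence). Suppose $f$ is \emph{not} continuous at $c$: there is a real $\varepsilon_0 > 0$ such that for every real $\delta > 0$ some $z \in \C$ satisfies $|z-c| < \delta$ and $|f(z) - f(c)| \geq \varepsilon_0$. Transferring this ``for all $\delta$'' statement yields: for every hyperreal $\delta > 0$ there is $x \in \Cs$ with $|x - c| < \delta$ and $|f^*(x) - f(c)| \geq \varepsilon_0$. Applying this with a fixed positive infinitesimal $\delta$, the resulting $x$ lies in $\hal(c)$, while $|f^*(x) - f(c)| \geq \varepsilon_0 > 0$ is not infinitesimal, so $f^*(x) \notin \hal(f(c))$. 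Hence $f^*(\hal(c)) \not\subset \hal(f(c))$, which is the contrapositive of the claim.

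The two $\varepsilon$--$\delta$ manipulations are routine; the one step to get right is the quantifier bookkeeping under transfer — using Universal Transfer in the first direction (where $\delta$ is a standard witness and everything else is universally quantified) and Existential Transfer or the contrapositive in the second — together with the single key fact that a positive infinitesimal is smaller than every positive real, which is exactly what turns ``$x \simeq c$'' into ``$|x-c| < \delta$''.
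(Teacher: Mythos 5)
Your proof is correct and is essentially the argument the paper relies on: the paper simply cites Goldblatt (p.\ 75) for the real case and notes the complex case is analogous, and Goldblatt's proof is exactly this transfer-based unfolding of the $\varepsilon$--$\delta$ definition (universal transfer with a standard $\delta$ for the forward direction, transfer of the negated continuity statement instantiated at a positive infinitesimal $\delta$ for the converse), with the only change being the complex modulus. Your explicit remark that $f$ on hyper arguments means $f^*$ is a welcome clarification of the statement as written.
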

\begin{proof}
    The real case is again in \cite{goldblatt} \page 75. The complex case reads analogously since the definition of continuity is essentially the same only with a different definition of the absolute value.
\end{proof}
\begin{remark}
    This property will be crucial later on to resolve singularities in geometric constructions.
\end{remark}
\begin{lemma}[{Real Limits{, \cite{goldblatt} \page 78}}]
        \label{nsa_basics:real_limits}
    For $c, L \in \R$ and $f$ be defined on $A\subset \R$ then it holds true:
    \begin{align*}
        \lim_{x \rightarrow c} f(x) &= L \Leftrightarrow f(x) \simeq L \quad \forall x \in \As: x \simeq c,\; x \neq c\\
        \lim_{x \rightarrow c^+} f(x) &= L \Leftrightarrow f(x) \simeq L \quad \forall x \in \As: x \simeq c,\; x > c\\
        \lim_{x \rightarrow c^-} f(x) &= L \Leftrightarrow f(x) \simeq L \quad \forall x \in \As: x \simeq c,\; x < c\\
    \end{align*}
\end{lemma}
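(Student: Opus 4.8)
The plan is to rewrite the $\epsilon$--$\delta$ definition of the limit as a first-order statement and to shuttle it between $\R$ and $\Rs$ with the transfer principle. It suffices to treat the two-sided case $\lim_{x\to c}f(x)=L$; the one-sided versions are obtained verbatim by replacing the clause $0<|x-c|<\delta$ throughout by $c<x<c+\delta$, respectively $c-\delta<x<c$. Throughout I tacitly assume $c$ is an accumulation point of $A$, so that the limit is meaningful and $\hal(c)$ genuinely contains points of $\As$ other than $c$; in the degenerate case both sides of the equivalence are vacuous and there is nothing to prove. Also, as usual I read $f(x)$ on the right-hand side as $f^*(x)$, which agrees with $f(x)$ for $x\in A$.

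For the direction ``$\Rightarrow$'' assume $\lim_{x\to c}f(x)=L$ and let $x\in\As$ with $x\simeq c$ and $x\neq c$. To prove $f^*(x)\simeq L$ it suffices to show $|f^*(x)-L|<\epsilon$ for every real $\epsilon>0$, so fix such an $\epsilon$. By the definition of the limit there is a real $\delta>0$ with ``$\forall y\in A:\ 0<|y-c|<\delta\Rightarrow|f(y)-L|<\epsilon$''. Universal transfer turns this into ``$\forall y\in\As:\ 0<|y-c|<\delta\Rightarrow|f^*(y)-L|<\epsilon$'' (the parameters $c,\delta,L,\epsilon$ staying fixed and real). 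Now $x\neq c$ gives $|x-c|>0$, while $x\simeq c$ means $x-c$ is infinitesimal, hence $|x-c|<\delta$ since an infinitesimal is below every positive real. Applying the transferred statement to $y=x$ yields $|f^*(x)-L|<\epsilon$. As $\epsilon>0$ was an arbitrary real, $f^*(x)-L\in\mathbb{I}$, i.e.\ $f^*(x)\simeq L$.

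For the direction ``$\Leftarrow$'' assume $f^*(x)\simeq L$ for every $x\in\As$ with $x\simeq c$, $x\neq c$, and fix a real $\epsilon>0$; I must produce a real $\delta>0$ witnessing the $\epsilon$-clause of the limit definition. Choose any positive infinitesimal $\delta_0\in\mathbb{I}$. If $x\in\As$ satisfies $0<|x-c|<\delta_0$, then $x\neq c$ and $x\simeq c$, so by hypothesis $f^*(x)\simeq L$, and since $\epsilon$ is a positive real this gives $|f^*(x)-L|<\epsilon$. Hence the statement ``$\exists\,\delta>0\ \forall x\in\As:\ 0<|x-c|<\delta\Rightarrow|f^*(x)-L|<\epsilon$'' holds (witnessed by $\delta_0$). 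By existential transfer there is a \emph{real} $\delta>0$ with ``$\forall x\in A:\ 0<|x-c|<\delta\Rightarrow|f(x)-L|<\epsilon$''. Since $\epsilon>0$ was arbitrary, this is precisely $\lim_{x\to c}f(x)=L$.

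The only delicate point is bookkeeping with the transfer principle: the forward direction uses universal transfer on a $\forall$-statement in which the real parameter $\delta$ has already been fixed, whereas the backward direction uses existential transfer to convert the hyperreal witness $\delta_0$ — which is infinitesimal, hence not real — into an honest real $\delta$. Keeping the quantifier structure straight (holding $\epsilon$ fixed and real while letting $\delta$ and the test point range over $\Rs$) is where care is needed; the arithmetic of the inequalities, namely that a nonzero infinitesimal lies strictly between $0$ and every positive real, is routine.
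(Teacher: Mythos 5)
Your proof is correct, and it is essentially the same argument the paper relies on: the lemma is quoted from Goldblatt (p.~78), whose proof is exactly this transfer-principle bookkeeping — universal transfer of the $\epsilon$--$\delta$ clause with a fixed real $\delta$ for one direction, and existential (downward) transfer of the infinitesimal witness $\delta_0$ for the other. Your added remarks on the accumulation-point caveat and the one-sided variants are fine and do not change the substance.
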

\begin{lemma}[Squeezing Limits]
    \label{nsa_basics:sqeezing_limits}
    Let $A\subset \R, c \in \inter(A)$, $L \in \C$ and let $f: A \setminus \{c \} \rightarrow \C$ be continuous. 
    If there exists a $\Dx \in \I_\R \Smz, \Dx > 0$ such that $f(c+\Dx) \simeq f(c-\Dx) \simeq L$, then the function $f$ can be continuously extended on $A$ with $f(c) = L$.
\end{lemma}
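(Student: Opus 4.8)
The plan is to reduce everything to the ordinary one-variable limit $\lim_{x \to c} f(x) = L$: once this is known, extending $f$ by $f(c) := L$ produces a function on $A$ that is continuous at $c$ by the definition of the limit and continuous on $A \setminus \{c\}$ by hypothesis, hence the desired continuous extension. By (the complex analogue of) \Cref{nsa_basics:real_limits}, proving $\lim_{x \to c} f(x) = L$ amounts to showing $f^{*}(x) \simeq L$ for every $x \in \As$ with $x \simeq c$ and $x \neq c$. The hypothesis supplies this only at the two specific points $c + \Dx$ and $c - \Dx$, so the real task is to propagate ``infinitely close to $L$ at one infinitesimal displacement'' to ``at every infinitesimal displacement'', separately on each side of $c$.

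First I would handle the right side. Set $L^{+} := \lim_{x \to c^{+}} f(x)$ and $L^{-} := \lim_{x \to c^{-}} f(x)$; in the situation of this article $f$ is built from analytic operations and radical expressions, so it has a local Puiseux expansion at $c$ and therefore genuine one-sided (radial) limits in $\C$, \ie $L^{+}$ and $L^{-}$ exist. Applying \Cref{nsa_basics:real_limits} to $L^{+}$ gives $f^{*}(x) \simeq L^{+}$ for all $x \in \As$ with $x \simeq c$ and $x > c$. Since $\Dx \in \I_\R \Smz$ with $\Dx > 0$, the point $c + \Dx$ satisfies $c + \Dx \simeq c$ and $c + \Dx > c$, so $f^{*}(c + \Dx) \simeq L^{+}$. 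Together with the hypothesis $f^{*}(c + \Dx) \simeq L$, this exhibits the hypercomplex number $f^{*}(c + \Dx)$ (which is limited, being infinitely close to the standard number $L$) as being infinitely close to the two standard numbers $L$ and $L^{+}$; by the uniqueness of the shadow (\Cref{nsa_basics:shadow}) we conclude $L^{+} = L$. The same computation with $c - \Dx$ in place of $c + \Dx$ yields $L^{-} = L$.

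Having $L^{+} = L^{-} = L$, the two-sided limit $\lim_{x \to c} f(x)$ exists and equals $L$, which by the first paragraph finishes the proof. The one step carrying real content --- and the main obstacle --- is the existence of the one-sided limits $L^{\pm}$: for an arbitrary continuous $f$ this can fail (one can produce a positive infinitesimal $\Dx$ with $f^{*}(c + \Dx) \simeq f^{*}(c - \Dx) \simeq L$ for an oscillating $f$ that is not continuously extendable), so the lemma must be read against the regularity enjoyed by the geometric functions of this paper. I would therefore either make that regularity assumption explicit or invoke the Puiseux normal form already available for such functions, in order to justify that $L^{+}$ and $L^{-}$ are defined; the remainder of the argument is then the purely formal shadow-uniqueness step above.
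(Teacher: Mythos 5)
Your route is essentially the paper's route: both arguments push the single-point information at $c\pm\Dx$ through the non-standard characterization of limits (\Cref{nsa_basics:real_limits}) and then define $f(c):=L$ and conclude continuity via \Cref{nsa_basics:conti}. The difference lies in how the propagation step is handled. The paper's proof asserts that, ``since $f$ is continuous'', every value $h^+$ of $f$ on the right part of the punctured halo satisfies $h^+\simeq f(c+\Dx)$ (and likewise on the left); but continuity of $f$ on $A\setminus\{c\}$ alone does not give this --- that assertion is precisely the existence of the one-sided limits. You make this hidden ingredient explicit by introducing $L^{\pm}$ and closing with the uniqueness of the shadow (\Cref{nsa_basics:shadow}), which is a clean way to finish once $L^{\pm}$ are known to exist.

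So the obstacle you flag is genuine, but it is a defect of the lemma as stated (and of the paper's own proof), not something your argument adds: with only an existential quantifier on $\Dx$ the statement fails, e.g.\ for $f(x)=\sin\!\left(\tfrac{1}{x-c}\right)$, where $\Dx=\tfrac{1}{\pi N}$ for an unlimited hypernatural $N$ gives $f(c+\Dx)=f(c-\Dx)=0\simeq L=0$ although no continuous extension exists. The repair is either to strengthen the hypothesis to ``for all $\Dx\in\I_\R$, $\Dx>0$'' --- then every $x\in\hal(c)\setminus\{c\}$ is of the form $c\pm\Dx$, \Cref{nsa_basics:real_limits} applies directly, and no separate existence of $L^{\pm}$ is needed --- or to add the regularity you invoke (local analytic/Puiseux behaviour of the functions arising from geometric constructions), which guarantees $L^{\pm}$ and makes your shadow-uniqueness argument complete. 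Just note that your justification of $L^{\pm}$ appeals to the intended applications rather than to the stated hypotheses, so it should be recorded as an explicit additional assumption rather than derived; with that amendment your proof is correct and, in spirit, the same as the paper's.
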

\begin{proof}
    Define the sets $A^+ = \{a \in A \bbar a > c\}$, $A^- = \{a \in A \bbar a < c\}$ and $H^\circ:= \hal(c) \setminus \{c\}$. Since $f$ is continuous it holds true by \Cref{nsa_basics:real_limits} 
    for all $h^+ \in f(H^\circ \cap (A^+)^*)$ that $h^+ \simeq f(c+\Dx)$ and furthermore for $h^- \in f(H^\circ \cap (A^-)^*)$ that $h^- \simeq f(c-\Dx)$. As by assumption $f(c+\Dx) \simeq f(c-\Dx)$ it holds true that $h \simeq f(c\pm \Dx)$ for all $h \in f(H^\circ)$. 
    
    Now we are almost done: it holds true that $(A\setminus \{c\})^* = \As \setminus \{c\}$ (see \cite{goldblatt}, \page~29). So if we want to extend $f$ to whole $\As$ we only have to define a value for $f(c)$: we define $f(c):= L$.
    Since $f(c\pm \Dx) \simeq L = f(c)$ it holds true that $f(x) \simeq L$ for all $x \in \hal(c)$. That is equivalent with the continuity of $f$ at $c$ by \Cref{nsa_basics:conti}. 
\end{proof}
\begin{remark}
    The last lemma is very useful since it gives us a recipe to continuously extend a function! If we have a discontinuity of $f$ at some point $c$, it is removable if and only if the shadows of $f(c+\Dx)$ and $f(c-\Dx)$ coincide and we can continue the function with the calculated shadow. 
\end{remark}
\begin{lemma}[{Complex Limits{, partially \cite{goldblatt} \page 78}}]
        \label{nsa_basics:limits}
    Let $c, L \in \C$ and $f$ be defined on $A\subset \C$. Then 
    \[
        \lim_{x \rightarrow c} f(x) = L \Leftrightarrow f(x) \simeq L \quad \forall x \in \As: x \simeq c,\; x \neq c.
    \]
\end{lemma}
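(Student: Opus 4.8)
The plan is to carry the proof of the real case (\Cref{nsa_basics:real_limits}, cf.\ \cite{goldblatt} \page 78) over almost verbatim. The only machinery used there is the $\epsilon$--$\delta$ definition of a limit, the transfer principle, and the elementary properties of the modulus --- $|z|\ge 0$, $|z|=0 \Leftrightarrow z=0$, and the triangle inequality --- each of which holds for the complex modulus exactly as for the real absolute value. One could alternatively try to reduce to the real statement through $\C\cong\R\times\R$ together with \Cref{nsa_basics:complex_shadow}, but since $x\to c$ here is an unrestricted limit in the plane rather than a one-sided real limit, arguing directly is cleaner (and it simultaneously recovers the two-sided version of \Cref{nsa_basics:real_limits}).

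For the direction ``$\Rightarrow$'', I would assume $\lim_{x\to c} f(x)=L$, fix an arbitrary $r\in\R^+$, and use the definition of the limit to obtain $\delta\in\R^+$ such that the implication $0<|y-c|<\delta \Rightarrow |f(y)-L|<r$ holds for all $y\in A$. Universal transfer upgrades this to the same implication for all $y\in\As$. Then for any $x\in\As$ with $x\simeq c$ and $x\ne c$, the quantity $|x-c|$ is a positive infinitesimal, hence $0<|x-c|<\delta$, and therefore $|f^*(x)-L|<r$; since $r\in\R^+$ was arbitrary, this gives $f^*(x)\simeq L$.

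For the direction ``$\Leftarrow$'', I would argue by contraposition: if $\lim_{x\to c} f(x)\ne L$, negating the definition yields an $\epsilon\in\R^+$ such that for every $\delta\in\R^+$ there is a $y\in A$ with $0<|y-c|<\delta$ and $|f(y)-L|\ge\epsilon$. Transferring this $\forall\delta\,\exists y$ sentence (the quantifier now ranging over $(\R^+)^*$) and instantiating $\delta$ at a positive infinitesimal produces a point $y\in\As$ with $0<|y-c|<\delta$, so $y\simeq c$ and $y\ne c$, while $|f^*(y)-L|\ge\epsilon$ forces $f^*(y)\not\simeq L$ --- contradicting the right-hand side.

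I expect the whole argument to be routine bookkeeping with the transfer principle; the only points that require a moment's care are (i) the implicit standing hypothesis that $c$ is a limit point of $A$, since for an isolated $c$ both sides hold vacuously (there is then no $x\in\As$ with $x\simeq c$ and $x\ne c$), and (ii) ensuring that the transferred quantifier over $\delta$ in the ``$\Leftarrow$'' direction ranges over $(\R^+)^*$, so that choosing $\delta$ infinitesimal is legitimate.
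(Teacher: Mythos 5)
Your argument is correct and is essentially the same route the paper takes: the paper simply defers to Goldblatt's transfer proof of the real case and notes that the complex version reads analogously since only the modulus changes, which is exactly the $\epsilon$--$\delta$ plus transfer argument you spell out. Your added care about $c$ being a limit point of $A$ and about the transferred $\delta$ ranging over $(\R^+)^*$ is sound and does not change the substance.
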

\begin{proof}
    Essentially, the proof is based on the equivalence of the Weierstrass $\epsilon-\delta$ continuity and the limit definition of continuity. The theorem is stated for the real case in \cite{goldblatt} \page 78. The proof by transfer can be found on \page 75--76. The complex version reads completely analogously.
\end{proof}
The next theorem will give important facts about the topology. It turns out that a set is open if and only if it includes the halo of all its points.

Remember that a set $A$ is open if and only if $\inter A = A$ (see \cite{munkres2000topology} \page 95).
\begin{lemma}[Halo and a Metric, \cite{davis1977applied} \page 89]
  \label{halo_and_matric}
 If $d$ is a standard metric on $X$ then it holds true 
    \[
        \hal(x):= \{ y \in X \bbar d(x,y) \simeq 0 \}.
    \]
   And so we can write $q \simeq p$ for $p,q \in X^*$ if and only if $d(p,q) \simeq 0$.
\end{lemma}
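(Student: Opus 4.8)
The plan is to compare the metric description of the halo with its general topological definition. Recall that for a topological space $X$ and a standard point $x \in X$, the halo of $x$ is $\hal(x) := \bigcap \{ U^* \mid U \subset X \text{ open}, \ x \in U \}$, which specialises to the $\Ks$-notion used earlier; here $d^*$ denotes the extension of the metric $d$ to $X^* \times X^*$. Note also that in the statement the condition $d^*(x,y) \simeq 0$ is meant for $y \in X^*$, since the only standard $y$ infinitely close to $x$ is $y = x$. I would establish the claimed equality by proving the two inclusions separately.

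For ``$\subseteq$'', fix $y \in \hal(x)$ and an arbitrary standard $\epsilon > 0$. The open ball $B_\epsilon(x) = \{ z \in X \mid d(x,z) < \epsilon \}$ is an open neighbourhood of $x$, hence $y \in B_\epsilon(x)^*$; transferring the defining formula of the ball gives $B_\epsilon(x)^* = \{ z \in X^* \mid d^*(x,z) < \epsilon \}$, so $d^*(x,y) < \epsilon$. As $\epsilon$ ranges over all standard positive reals and $d^*(x,y) \geq 0$ by universal transfer of the positivity of $d$, we conclude $d^*(x,y) \in \I_\R$, i.e.\ $d^*(x,y) \simeq 0$. For ``$\supseteq$'', fix $y \in X^*$ with $d^*(x,y) \simeq 0$ and let $U$ be any open neighbourhood of $x$. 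Openness provides a standard $\epsilon > 0$ with $B_\epsilon(x) \subset U$, and transfer gives $B_\epsilon(x)^* \subset U^*$; since $d^*(x,y) \simeq 0$ implies $d^*(x,y) < \epsilon$, we get $y \in B_\epsilon(x)^* \subset U^*$. As $U$ was arbitrary among open neighbourhoods of $x$, $y \in \hal(x)$.

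Finally, the concluding statement follows immediately: $q \simeq p$ abbreviates $q \in \hal(p)$, which for standard $p$ is exactly $d^*(p,q) \simeq 0$ by what was just proved, and for general $p,q \in X^*$ one takes $d^*(p,q) \simeq 0$ as the definition of $\simeq$ on $X^*$, consistently with the standard case. There is no genuine obstacle here: the only points requiring care are to keep $\epsilon$ standard when passing from ``$d^*(x,y) < \epsilon$ for all such $\epsilon$'' to ``$d^*(x,y)$ is infinitesimal'', and to recall that $d \geq 0$ transfers, so that this one-sided bound already forces $d^*(x,y) \simeq 0$.
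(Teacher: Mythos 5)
The paper states this lemma as an imported result (citing Davis, \emph{Applied Nonstandard Analysis}, p.~89) and gives no proof of its own, so there is nothing internal to compare against; your argument is essentially the standard textbook proof that the citation points to, and it is correct. Both inclusions are handled properly: keeping $\epsilon$ standard, transferring the defining formula of the ball $B_\epsilon(x)$ to get $B_\epsilon(x)^* = \{z \in X^* \mid d^*(x,z) < \epsilon\}$, and using the transferred nonnegativity of $d$ so that ``$< \epsilon$ for every standard $\epsilon > 0$'' forces $d^*(x,y) \in \I$. You also correctly repair two small impressions in the lemma's wording that the paper leaves implicit: the set should live in $X^*$ with the extended metric $d^*$ (not $y \in X$ with $d$), and the final ``$q \simeq p$ for $p,q \in X^*$'' is, for nonstandard $p$, an extension of the relation by definition rather than a consequence of the topological monad characterization, which only applies at standard points. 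Your note making that distinction explicit is exactly the right amount of care; no gaps.
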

\begin{theorem}[Topological Non-standard Continuity, \cite{davis1977applied} \page 79]
    \label{nsa_basics:nsa_topo_conti}
   Let $f$ map $X$ to $Y$ where $X,Y$ are both topological spaces. Let $p \in X$. Then $f$ is continuous at $p$ if and only if 
   \[
       q \simeq p \Rightarrow f(q) \simeq f(p).
   \]
\end{theorem}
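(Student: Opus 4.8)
This is the topological sibling of \Cref{nsa_basics:conti}, and the natural route is through the nonstandard description of the open-set structure announced just above the statement: for a point $p$ of a topological space $X$ one adopts $\hal(p) = \bigcap\{U^* : U \text{ open}, p \in U\}$ as the monad of $p$, and then a standard set $O \subset X$ is open exactly when $\hal(p) \subset O^*$ for every $p \in O$ — equivalently $p \in \overline{A}$ iff $\hal(p) \cap A^* \neq \emptyset$. I would first record this fact (it is immediate from the definition of the halo as the intersection of the enlargements of the open neighbourhoods of $p$, together with universal transfer) and then treat the two implications separately.

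For the direction ``continuous at $p$ $\Rightarrow$ halo condition'', assume $f$ is continuous at $p$ and fix $q \simeq p$, i.e.\ $q \in \hal(p)$. Given any open neighbourhood $V$ of $f(p)$, continuity supplies an open neighbourhood $U$ of $p$ with $f(U) \subset V$, and universal transfer upgrades this to $f^*(U^*) \subset V^*$. Since $q \in \hal(p) \subset U^*$ we get $f^*(q) \in V^*$; as $V$ ranges over all open neighbourhoods of $f(p)$ this forces $f^*(q) \in \bigcap\{V^* : V \text{ open}, f(p) \in V\} = \hal(f(p))$, \ie $f^*(q) \simeq f(p)$. This half is pure transfer and presents no difficulty.

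For the converse I would argue by contraposition. If $f$ is not continuous at $p$, there is an open $V \ni f(p)$ such that every open neighbourhood $U$ of $p$ contains a point mapped outside $V$; hence $p$ lies in the closure of $A := X \setminus f^{-1}(V)$. Invoking the nonstandard characterisation of closure recorded in the first step, I obtain a point $q$ with $q \in \hal(p) \cap A^*$: concretely, the family consisting of $A$ together with all open neighbourhoods of $p$ has the finite intersection property, so by the enlargement (concurrence/saturation) property of the model this family has a common point $q$ in the extension. For this $q$ we have $q \simeq p$, but $q \in A^*$ gives $f^*(q) \notin V^* \supseteq \hal(f(p))$, so $f^*(q) \not\simeq f(p)$, contradicting the hypothesis.

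The single genuinely non-routine step is the extraction of the witness $q$ in the converse: it is exactly there that one must use that the ambient structure is an enlargement, as set up in the Enlargement notation, rather than an arbitrary larger field — everything else is bookkeeping with universal transfer and the definition of the halo.
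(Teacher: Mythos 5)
The paper does not give its own proof of this theorem — it is quoted directly from \cite{davis1977applied} (p.~79) — so there is no in-paper argument to compare against; your proof is the standard monad-based one from that source and is correct. Both halves are sound: the forward implication is pure transfer of $f(U)\subset V$ to $f^*(U^*)\subset V^*$ together with $\hal(p)=\bigcap\{U^*: U \text{ open},\, p\in U\}$, and in the converse your extraction of a point of $\hal(p)\cap A^*$ from $p\in\overline{A}$ via the finite intersection property is exactly the step that needs the enlargement (concurrence) property, which you correctly identify as the only non-routine ingredient.
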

\begin{remark}
    The characterization of continuity in the previous theorem reads exactly analogously to the definition of continuity in \Cref{nsa_basics:conti} and this is completely canonical since the previous theorem generalizes infinitesimal proximity to arbitrary topological spaces.
This can be used later on to define continuity directly in the projective space.
\end{remark}
%
\begin{lemma}[Constant on Open Sets]
    \label{nsa_basics:const_fct}
    Let $A\subset \C$ open, $c\in A$ and $f: A \rightarrow \C$ be a function on $A$. 
    \begin{enumerate}
\item If $f$ is constant on $\hal(c)$ then there is an open set $A' \subset A$ with $c \in A'$ such that $f$ is constant on $A'$.
\item If $f$ is constant on on $A$ then $f$ is also constant on $\As$.
    \end{enumerate}
\end{lemma}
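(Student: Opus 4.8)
The plan is to derive both parts purely from the transfer principle; notably, no regularity of $f$ is used anywhere, only that $A$ is open. As a preliminary observation: since $A$ is open and $c\in A$, there is a standard $r>0$ with $B_r(c)\subset A$, hence $(B_r(c))^*\subset\As$; and whenever $x\simeq c$ we have $\abs{x-c}<r$, so $\hal(c)\subset (B_r(c))^*\subset\As$. In particular the extended function $f^*$ is defined on all of $\hal(c)$, so the phrase ``$f$ is constant on $\hal(c)$'' — which, following the paper's abuse of notation, means $f^*$ is constant on $\hal(c)$ — is meaningful; since $c\in\hal(c)$, that constant value is necessarily $w:=f^*(c)=f(c)$.

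For part 1, I would consider the first-order statement $\phi$ asserting: there exists $\delta>0$ such that for every $x\in A$ with $\abs{x-c}<\delta$ one has $f(x)=w$. Its $*$-transform $\phi^*$ asserts the existence of a \emph{hyperreal} $\delta>0$ such that every $x\in\As$ with $\abs{x-c}<\delta$ satisfies $f^*(x)=w$. I claim $\phi^*$ is true: take any positive infinitesimal $\delta$; then $\abs{x-c}<\delta$ forces $x\simeq c$, i.e. $x\in\hal(c)$, and the hypothesis gives $f^*(x)=w$. By the Existential Transfer theorem (the downward direction of transfer), $\phi$ itself is true, so there is a \emph{standard} $\delta_0>0$ with $f(x)=w$ for all $x\in A$ with $\abs{x-c}<\delta_0$. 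Setting $A':=B_{\delta_0}(c)\cap A$ finishes part 1: it is open, contains $c$, lies in $A$, and $f$ is constant on it.

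For part 2, let $w:=f(c)$. Since $f$ is constant on $A$, the statement ``$\forall x\in A:\ f(x)=w$'' is true, so by the Universal Transfer principle ``$\forall x\in\As:\ f^*(x)=w$'' holds; that is, $f$ (read: $f^*$) is constant on $\As$.

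I expect the only point requiring genuine care to be the bookkeeping in part 1: writing $\phi$ with the inner quantifier explicitly relativized to $A$ so that $\phi^*$ relativizes it to $\As$, and using the preliminary observation to be sure the infinitesimal witness $\delta$ actually produces points lying inside $\hal(c)$, where the hypothesis applies. Once the statement is phrased correctly, both halves are immediate applications of transfer, and no computation is involved.
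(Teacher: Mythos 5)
Your proof is correct and follows essentially the same route as the paper: part 1 is the paper's argument verbatim in spirit (the hypothesis yields a true $*$-statement witnessed by an infinitesimal $\delta$, and existential/downward transfer produces a standard $\delta_0$ giving the open set $A'$), and part 2 is the same one-line universal transfer. Your only deviation is a welcome extra bit of care — relativizing the quantifier to $A$/$\As$ and noting $\hal(c)\subset\As$ — where the paper loosely quantifies over all of $\Cs$.
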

\begin{proof}
We proof 1: if $f$ is constant on $\hal(c)$ the following statement is true:
   \[
       \exists \delta \in (\R^+)^*: \forall x \in \Cs:\left(|x-c| < \delta \Rightarrow f(x) = f(c)\right).
   \]
   Now we apply existential transfer and find the next statement also true:
   \[
       \exists \delta \in \R^+: \forall x \in \C:\left(|x-c| < \delta \Rightarrow f(x) = f(c))\right.
   \]
   Which says that there is an open set $A':=  \{x \in A \bbar |x-c| < \delta \}$ such that $f$ is constant on $A'$.

   Now we proof 2: if $f$ is constant on $A$, then the following sentence is true: there is $d \in \C$ such that
   \[ \forall x \in A: f(x) = d 
   \]
   Then apply universal transfer:
   \[ \forall x \in \As: f(x) = d
   \]
which is the claim.
\end{proof}
\begin{remark}
    As the converse argument this also implies that if a function $f$ does not vanish on the $\hal(c)$ then there is an $\epsilon >0, \epsilon \in \R$ such that the function $f$ also does not vanish on $B_\epsilon(c)$.
\end{remark}
\begin{lemma}[Constant]
    Let $D$ be a domain and $z_0 \in D$. If an analytic function $f: D \rightarrow \C$ is constant on $\hal(z_0)$ then it constant on $D$.
\end{lemma}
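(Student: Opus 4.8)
The plan is to reduce to the local statement already proven and then invoke the identity theorem for holomorphic functions. The key observation is that \Cref{nsa_basics:const_fct} is directly applicable: a domain $D$ is in particular an open subset of $\C$ and $z_0 \in D$, so the first part of that lemma gives an open set $A' \subset D$ with $z_0 \in A'$ such that $f$ is constant on $A'$.

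From there I would argue as follows. Since $A'$ is a nonempty open set, it has an accumulation point lying in $D$ (every point of $A'$ is such). Let $c := f(z_0)$ be the constant value $f$ takes on $A'$, and consider $g(z) := f(z) - c$, which is analytic on $D$ and vanishes on $A'$. Because $D$ is connected (being a domain) and $g$ is analytic on $D$ and vanishes on a subset of $D$ with an accumulation point in $D$, the identity theorem for holomorphic functions forces $g \equiv 0$ on $D$. Hence $f \equiv c$ on $D$, i.e.\ $f$ is constant on $D$.

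There is essentially no hard step here: the work was done in \Cref{nsa_basics:const_fct} (via existential transfer), and the remainder is a textbook application of analyticity on a connected set. The only point requiring a moment of care is confirming that the hypotheses of \Cref{nsa_basics:const_fct} are met — namely that ``$D$ a domain'' supplies the ``$A \subset \C$ open'' hypothesis and that connectedness of $D$ is what licenses the passage from the local constancy to global constancy; without connectedness the conclusion would fail, so it is worth stating explicitly that a domain is open and connected.
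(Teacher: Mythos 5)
Your proposal is correct and follows exactly the paper's route: first apply \Cref{nsa_basics:const_fct} to obtain an open neighborhood of $z_0$ on which $f$ is constant, then invoke the identity theorem for analytic functions on the connected set $D$. You merely spell out the identity-theorem step (via $g = f - c$ and the accumulation point) in slightly more detail than the paper does, which is fine.
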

\begin{proof}
  By \Cref{nsa_basics:const_fct} we know that if $f$ is constant on a $\hal(z_0)$ then there is an open set $D' \subset D$ where $f$ is constant. By the identity theorem for complex functions (see \cite{ablowitz2003complex} \page 122) the claim follows immediately.
\end{proof}

\section{Non-standard Projective Geometry}
\label{chap:nsa-pg}

We will now introduce the usage of hyperreal and hypercomplex numbers in projective geometry. Surprisingly there is some preliminary work by A.\ Leitner (\cite{leitner2015limits}). Leitner introduces projective space over the hyperreal numbers (not considering complex space) and uses this to proof properties of the diagonal Cartan subgroup of $\operatorname{SL}_n(\mathbb{R})$.
\begin{definition}[$\KsP^d$]
Let $d \in \N$. We define $\KsP^d$ analogously to $\KP^d$:
    \[
        \KsP^d \; := \;  \dfrac{\Ks^d \setminus \{0\}}{\Ks \setminus \{0\}}
    \]
    where $\Ks$ denote the hyper (real or complex) numbers.
\end{definition}
\begin{definition}[Standard and Non-standard Projective Geometry]
    If we talk about projective geometry in $\KP^d$, we will refer to it as \textit{standard projective geometry} and for results in the extended field $\KsP^d$ we will refer to as \textit{non-standard projective geometry}.
\end{definition}
\begin{definition}[Similary]
    If two vectors $x,y \in \KP^2$ represent the same equivalence class, we will write $x \sim_{\KP^2} y$ and analogously for the enlarged space $\KsP^2$ we will write $x \sim_{\KsP^2} y$.
    \end{definition}
    \begin{remark}
       If the situation allows, we will sometimes also drop the ``$\sim$'' relation symbol and just write ``$=$''. 
    \end{remark}
    \begin{definition}[Representatives]
            \label{nsa_pg:representative}
            We call a representative $x$ of $[x] \in \KsP^d$ 
        \begin{itemize}
        \item    \textit{limited}, if and only if for all components $x_i$ of $x$ it holds true that $x_i \in \mathbb L$. 
        \item \textit{infinitesimal}, if and only if for all
    components $x_i$ of $x$ it holds true that $x_i \in \mathbb I$. 
\item    \textit{appreciable}, if and only if for all components $x_i$ of $x$ it holds true that $x_i \in \mathbb L$ and there is at least one component $x_{i'}$ which is appreciable. 
    \item \textit{unlimited}, if and only if for at least one
        $x_i$ of $x$ it holds true that $x_i \in \mathbb \K_\infty$. 
    \end{itemize}
\end{definition}
\begin{remark}
        We will denote by $\norm{x}$ the Euclidean norm $\norm x_2$ of a vector in $x \in \K^d$ or $x \in \Ks^d$, if not stated otherwise. 
\end{remark}
%
%
\begin{definition}[Points and Lines in $\KsP^2$]
    We define the sets of point $\mathcal{P}_\Ks$ and lines $\mathcal{L}_\Ks$ of $\KsP^2$ by
    \begin{align*}
        \mathcal{P}_\Ks \; := \;  \dfrac{\Ks^3 \setminus \{0\}}{\Ks \setminus
        \{0\}}, \quad 
        \mathcal{L}_\Ks \; := \;  \dfrac{\Ks^3 \setminus \{0\}}{\Ks \setminus \{0\}}.
    \end{align*}
\end{definition}
This definition is is the logical consequence of the introduction of hyperreal and hypercomplex numbers. But this also entails modifications to the standard operations in projective geometry. For example the cross product of two points defines the connecting line, but the definition will not be independent of the representative any more. This is not as bad as it seems on the first sight: it turns out that if representatives are of reasonable length (meaning appreciable), a lot of properties transfer from standard geometry to the non-standard version. Therefore, we will introduce adapted definitions for the basic operations in projective geometry.

\begin{definition}[Appreciable Representative]
   We call $x = (x_1,\ldots,x_d)^T$ a \textit{appreciable
   representative} of $[x] \in \CsP^d$ if the norm of $x$ is appreciable, \ie $\norm{x} \in \A$.
    \end{definition}
\begin{definition}[Almost Equivalent]
    We call two objects $x,y \in \CsP^n$ \textit{almost equivalent}, if
    $\exists \lambda \in \Cs$ such that
    \[
        x_\A \simeq \lambda \cdot y_\A.
    \]
  \end{definition}
    We write again $x \simeq y$. This is obviously consistent with any metric in $\CsP^2$ as in \Cref{halo_and_matric}.
\begin{definition}[Projective Shadow]
    \label{nsa_pg:psh}
    The \textit{projective shadow} of $[x] = [(x_1,\ldots,x_d)^T] \in \CsP^d$ is defined by
    \[
        \psh ([x]) := [\sh (x_{\mathbb A})]    \]
        where $x_\A$ is an appreciable representative of $x$.
\end{definition}

\begin{definition}[Appreciable Cross-Product]
    \label{nsa_pg:cross-prod}
    Let $x,y$ be in two vectors in $\CsP^2$. We define the \textit{appreciable cross-product} $\mathit{\times_*}:$ by:
   \[
       x \times_* y :=  x_\A \times y_\A 
   \]
   For appreciable representatives $x_\A, y_\A$ of $x$ and $y$.
\end{definition}
\begin{definition}[Almost Far Point]
    We call $p \in \mathcal{P}_\Cs$ an \textit{almost far point}, if it holds true for an appreciable representative $p_\A$ of $p$ with 
    \[
        p_\A = \begin{pmatrix} x \\ y \\ z\end{pmatrix} \quad \text{ and } x, y, z  \in \Lim
    \]
    that $z \in \I$, \ie that the $z$-component of the appreciable representative is infinitesimal.
\end{definition}
\begin{definition}[Appreciable join and meet]
    Analogously to the standard definition of the join in $\CP^2$ we define the \textit{appreciable $\mathit{join}$} for two points $p,q \in \mathcal{P}_\Cs$ by
        \[
            \join_*(p,q):= p \times_* q = p_\A \times q_\A.
        \]
        And for two lines $l,m \in  \mathcal{L}_\Cs$ we define the \textit{appreciable $\mathit{meet}$} by 
        \[
            \meet_*(l,m):= l \times_* m = l_\A \times m_\A.
        \]
\end{definition}
\begin{remark}
    The result of an appreciable $\join$ or $\meet$ is not necessarily appreciable! Take for example the two points $p = (0,0,1)$ and $q = (\epsilon, 0,1)$, with $\epsilon \in \I$, \ie $\epsilon$ is infinitesimal. Then their appreciable $\join$, which is equivalent to the standard join since $p$ and $q$ are appreciable, is 
    \[
    \join_*(p, q) = \join(p,q) = \begin{pmatrix} 0 \\ \epsilon \\ 0 \end{pmatrix}
    \]
    which is not an appreciable vector!
\end{remark}

\begin{definition}[Almost Orthogonal]
    We two vectors $l,p \in \Cs^3$ \textit{almost orthogonal} and write $[p] \perp_\I [l] $  if and only if
    \[
 \langle p, l \rangle_*       = \epsilon \in
        \mathbb{I}.
    \]
\end{definition}
\begin{definition}[Almost Incident]
    Define relation $\mathcal{I}_\Cs \subset \mathcal{P}_\Cs \times \mathcal{L}_\Cs$ for a point $p \in  \mathcal{P}_\Cs$ and a line $l \in  \mathcal{L}_\Cs$ by
    \[
        [p] \mathcal{I}_\Cs [l] \Leftrightarrow  [p] \perp_\I [l].
    \]
    Then we call $p$ \textit{almost incident} to $l$.
\end{definition}

\section{Non-standard Analysis and Removal of Singularities}
Our original motivation to study non-standard analysis was the promise to resolve singularities in geometric constructions. So we follow up to that promise and will provide the reader with methods to do so. We will now combine non-standard analysis and non-standard projective geometry to achieve the goal.

Imagine the situation of an isolated removable singularity: on every epsilon ball around the singularity the function is well behaved and even analytic. By the notion of non-standard continuity every perturbation by in infinitesimal displacement will be infinitely close the analytic continuation of the function.
We will now analyze \Cref{sing_basics:ex_path} by \jrguk and its prerequisites from a non-standard viewpoint. The basic idea of the Theorem was that if a function $\Psi$ does not vanish on an epsilon ball around a singularity $t_0$ and furthermore if there is a component $\Psi_i$ of $\Psi$ such that the fraction $\frac{\Psi_j(t_0)}{\Psi_i(t_0)}$ has a removable singularity then we can continuously extend the function at $t_0$.
Firstly, we will generalize the non-vanishing property from an epsilon ball to the halo around $t_0$.
\begin{lemma}[Non-vanishing on the Halo]
        \label{nsa_prefac:not_vanish}
        \pathsettingwocomp

        Define $A'$ as the set of indices of $\Psi$--components which are not constantly 0 on $\hal(t_0) \setminus \{t_0\}$. Then the sets coincide: $A' = A$.
\end{lemma}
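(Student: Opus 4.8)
The plan is to prove both inclusions $A \subseteq A'$ and $A' \subseteq A$, and the entire content is a transfer argument: the statement ``the component $\Psi_i$ is constantly zero on a standard neighborhood of $t_0$'' should be equivalent to ``its extension $\Psi_i^*$ is constantly zero on $\hal(t_0)$''. This is exactly the flavour of \Cref{nsa_basics:const_fct} and the remark following it. The only extra wrinkle is that $A'$ is defined via the \emph{punctured} halo $\hal(t_0) \setminus \{t_0\}$, and I would bridge the gap at the point $t_0$ itself using continuity of $\Psi$ at $t_0$.

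For $A' \subseteq A$ I would argue by contraposition. Assume $i \notin A$, so $\Psi_i$ vanishes identically on some real interval $(t_0 - \delta, t_0 + \delta)$ with $\delta \in \R^+$. The true sentence ``$\forall x \in [0,1]:\ |x - t_0| < \delta \Rightarrow \Psi_i(x) = 0$'' transfers (universal transfer) to ``$\forall x \in [0,1]^*:\ |x - t_0| < \delta \Rightarrow \Psi_i^*(x) = 0$''. Since every $x \in \hal(t_0)$ satisfies $|x - t_0| \simeq 0 < \delta$ and lies in $[0,1]^*$ (because $t_0 \in (0,1)$ is an interior point), $\Psi_i^*$ vanishes identically on $\hal(t_0)$, hence also on $\hal(t_0) \setminus \{t_0\}$; thus $i \notin A'$.

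For $A \subseteq A'$ I would again use contraposition. Assume $i \notin A'$, i.e.\ $\Psi_i^* \equiv 0$ on $\hal(t_0) \setminus \{t_0\}$. First I remove the puncture: by continuity of $\Psi_i$ at $t_0$ (\Cref{nsa_basics:conti}) we have $\Psi_i^*(\hal(t_0)) \subset \hal(\Psi_i(t_0))$, so choosing any $x \in \hal(t_0) \setminus \{t_0\}$ gives $0 = \Psi_i^*(x) \simeq \Psi_i(t_0)$; as $\Psi_i(t_0)$ is a standard number, $\Psi_i(t_0) = 0$, and hence $\Psi_i^*$ vanishes on all of $\hal(t_0)$. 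Now the existential sentence ``$\exists \delta \in (\R^+)^*:\ \forall x \in [0,1]^*:\ (|x - t_0| < \delta \Rightarrow \Psi_i^*(x) = 0)$'' is true (take $\delta$ any positive infinitesimal, so that $|x - t_0| < \delta$ forces $x \in \hal(t_0)$); by existential transfer there is $\delta \in \R^+$ with $\Psi_i(x) = 0$ for all $x \in [0,1]$ with $|x - t_0| < \delta$. Thus $\Psi_i$ is constantly zero on a neighborhood of $t_0$, i.e.\ $i \notin A$. Combining the two inclusions yields $A = A'$.

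I expect no serious obstacle here: the argument is essentially a double application of transfer in the spirit of \Cref{nsa_basics:const_fct}. The two points needing a little care are (i) handling the point $t_0$ itself, which is precisely where continuity of $\Psi$ enters, and (ii) ensuring the quantifiers range over the correct hyper-domain $[0,1]^*$ and that $\hal(t_0) \subset [0,1]^*$, which holds because $t_0$ is interior to $[0,1]$. An alternative packaging would be to apply \Cref{nsa_basics:const_fct} directly to $\Psi_i$ restricted to a small open interval around $t_0$, together with its accompanying remark, rather than spelling out the transfer steps.
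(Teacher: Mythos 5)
Your proof is correct, and it rests on the same engine as the paper's proof — transfer applied in both directions — but the two arguments formalize the statement differently. The paper reads ``not constantly $0$'' as ``nonvanishing on a punctured $\epsilon$-ball'' resp.\ ``nonvanishing on $\hal(t_0)\setminus\{t_0\}$'' and transfers these non-vanishing sentences directly (universal transfer one way, existential transfer back); this tacitly strengthens membership in $A$ to ``nowhere zero on a punctured neighborhood'', which for merely continuous components is more than ``not identically zero'', and it is this stronger form that is later used (e.g.\ to ensure the denominator in \Cref{nsa_prefac:limited_frac} is nonzero on the punctured halo). You instead take the literal reading, argue both inclusions by contraposition, transfer the \emph{vanishing} statements — in effect a direct application of \Cref{nsa_basics:const_fct} — and use continuity at $t_0$ (\Cref{nsa_basics:conti}) to pass from the punctured halo to the full halo before the downward (existential) transfer, which is exactly the extra care the puncture requires. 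Your version is the more faithful proof of the lemma as stated, and your restriction of the quantifiers to $[0,1]^*$ (legitimate since $t_0$ is interior) is cleaner than the paper's quantification over all of $\C$; what the paper's stronger reading buys is the non-vanishing conclusion it actually needs downstream, which for the selected component already follows from \Cref{sing_basics:ex_path}. No gap in your argument itself.
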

\begin{proof}
        First we show that $j \in A \Rightarrow j \in A'$:
       if $j \in A$ there is a fixed $\epsilon \in \R, \epsilon > 0$ such that the following sentence is true:
       \[
               \forall z \in \C: (|z-t_0| < \epsilon \wedge z \neq t_0 \Rightarrow \Psi_j(z) \neq 0).
       \]
       By universal transfer we know that
       \[
               \forall z \in \C^* : (|z-t_0| < \epsilon \wedge z \neq t_0 \Rightarrow \Psi_j^*(z) \neq 0).
       \]
       By definition the distance of every member $z'$ of $\hal(t_0)$ to $t_0$ is infinitesimal. Hence, every $z'$ trivially fulfills the bound $|z' - t_0| < \epsilon$. Therefore $f(z') \neq 0$, which is just differently phrased that $j \in A'$. 
       
  Now we show $j \in A' \Rightarrow j \in A$: we apply existential transfer: if $j \in A'$ the following sentence is true:
       \[
         \exists \epsilon \in (\R^+)^* \, \forall z \in \Cs : \abs{t_0 - z} < \epsilon \Rightarrow \abs{\Psi_j^*(z) }> 0.
       \]
       Then by existential transfer also the following sentence is true
       \[
         \exists \epsilon \in \R^+ \, \forall z \in \C : \abs{t_0 - z} < \epsilon \Rightarrow \abs{\Psi_j(z)} > 0.
       \]
       which means nothing else then $j \in A$.
\end{proof}
\begin{lemma}[Limited Fraction]
        \label{nsa_prefac:limited_frac}
        \pathsetting

        Then it holds true for all $\Delta t \in \I, \Delta t \neq 0$:
        \[
            \frac{\Psi_j^*(t_0+\Delta t)}{\Psi_i^*(t_0+\Delta t)} \in \Lim         \]
$\forall j \in \{1, \ldots, d\}$. In other words the fraction is limited and has a complex number as shadow.
\end{lemma}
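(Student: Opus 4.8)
The plan is to reduce the statement to the non-standard characterisation of continuity applied component-wise to the fractional continuation $\Theta$. First I would unpack what the setting of \Cref{sing_basics:ex_path} gives: $\Theta : B_\epsilon(t_0) \rightarrow \C^d \Smz$ is continuous on \emph{all} of $B_\epsilon(t_0)$ (in particular at $t_0$), its $j$-th component has the explicit form $\Theta_j(t) = \Psi_j(t)/\Psi_i(t)$, and $\Psi_i$ is non-vanishing on the punctured neighbourhood $B_\epsilon(t_0)\setminus\{t_0\}$ — otherwise the quotient defining $\Theta$ in \Cref{sing_basics:ex_path} would not even be meaningful (this is precisely why $i$ is the \emph{selected} component). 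So for each $j$ the real statement
\[
    \forall t:\ \bigl(0 < |t - t_0| < \epsilon\bigr) \;\Rightarrow\; \Psi_i(t) \neq 0 \ \wedge\ \Theta_j(t) = \Psi_j(t)/\Psi_i(t)
\]
holds, and I would apply universal transfer to it.

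Next I would fix $j \in \{1,\ldots,d\}$ and take $x := t_0 + \Delta t$ with $\Delta t \in \I$, $\Delta t \neq 0$. Since $x \in \hal(t_0)\setminus\{t_0\}$, the transferred statement gives $\Psi_i^*(x) \neq 0$ and $\Psi_j^*(x)/\Psi_i^*(x) = \Theta_j^*(x)$; in particular the fraction in the claim is well defined and equals $\Theta_j^*(t_0+\Delta t)$.

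Then I would invoke the non-standard continuity criterion: because $\Theta_j$ is continuous at $t_0$, \Cref{nsa_basics:nsa_topo_conti} (equivalently \Cref{nsa_basics:conti}) yields $\Theta_j^*(x) \simeq \Theta_j(t_0)$ for every $x \simeq t_0$, hence
\[
    \frac{\Psi_j^*(t_0+\Delta t)}{\Psi_i^*(t_0+\Delta t)} \;=\; \Theta_j^*(t_0+\Delta t) \;\simeq\; \Theta_j(t_0) \in \C .
\]
A hypercomplex number infinitely close to a standard complex number is limited — its modulus is bounded by the standard quantity $|\Theta_j(t_0)| + 1$ — so the fraction lies in $\Lim$, and by \Cref{nsa_basics:shadow} its shadow is exactly the complex number $\Theta_j(t_0)$. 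Since $j$ was arbitrary, this is the assertion.

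The only delicate point — the main obstacle — is making sure the transferred quotient is genuinely defined, i.e. that $\Psi_i^*(t_0+\Delta t) \neq 0$, so that the transferred identity $\Theta_j^*(x)=\Psi_j^*(x)/\Psi_i^*(x)$ actually applies at $x = t_0 + \Delta t$. This is handled by the non-vanishing of $\Psi_i$ on $B_\epsilon(t_0)\setminus\{t_0\}$ that is built into \Cref{sing_basics:ex_path} (cf.\ also $i \in A'$ from \Cref{nsa_prefac:not_vanish}); once that is in place, the rest is a routine combination of transfer and the halo characterisation of continuity.
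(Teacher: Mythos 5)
Your proof is correct and takes essentially the same route as the paper: the paper quotes the existence of the limit $g=\lim_{t\to t_0}\Psi_j(t)/\Psi_i(t)$ and then applies the non-standard limit characterization (\Cref{nsa_basics:limits}), which is the same device as your use of the continuity of $\Theta_j$ at $t_0$ together with the halo criterion of \Cref{nsa_basics:conti}, since $\Theta_j(t_0)$ is precisely that limit. Your care about non-vanishing of $\Psi_i^*$ on the punctured halo (via transfer, cf.\ \Cref{nsa_prefac:not_vanish}) matches the paper's first step, so no substantive difference remains.
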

\begin{proof}
    Firstly, we note that we do not divide by $0$ due to \Cref{nsa_prefac:not_vanish}. In \cite{kortenkamp2001grundlagen} it's argued that there is a $g \in \C$ such that 
        \[
                g = \lim_{t\rightarrow t_0} \frac{\Psi_j(t)}{\Psi_i(t)} \text{ for all } t \in B_\epsilon(t_0)
        \]
        Now we apply the rules on limits in \Cref{nsa_basics:limits}:
        \begin{align*}
                g &=  \lim_{t\rightarrow t_0} \frac{\Psi_j(t)}{\Psi_i(j)}\\ 
                \Leftrightarrow g &\simeq \frac{\Psi_j^*(t)}{\Psi_i^*(t)}, \quad \forall t \simeq t_0, t \neq t_0\\
                \Leftrightarrow g &\simeq \frac{\Psi_j^*(t_0 + \Dt)}{\Psi_i^*(t_0 +\Dt)}, \quad \forall \Dt \in \I, \Dt \neq 0\\
        \Rightarrow g &= \sh \left(\frac{\Psi_j^*(t_0 + \Dt)}{\Psi_i^*(t_0 +\Dt)} \right), \quad \forall \Dt \in \I, \Dt \neq 0        
\end{align*}
        Which is equivalent to the claim.
\end{proof}
Now we are able to formulate a first main result on the removal of singularities. The next theorem will give us a proper way to desingularize functions using methods of non--standard analysis.
\begin{theorem}[Shadow of a Singularity]
    \pathsettingwoprefactor

  Then for an arbitrary $\Dt \in \I \setminus \{0\}$ it holds true, for $j = 1, \ldots, d$:
        \begin{align*}
                \Theta_j(t_0) &= \sh \left(\frac{\Psi_j^*(t_0+\Delta t)}{\Psi_i^*(t_0+\Delta t)} \right).
        \end{align*}
\end{theorem}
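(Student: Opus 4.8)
The plan is to reduce the statement to two facts already in hand: the structural description of the fractional continuation $\Theta$ from \Cref{sing_basics:ex_path} together with its continuity at $t_0$, and the limit computation carried out in \Cref{nsa_prefac:limited_frac}. First I would recall, from \Cref{sing_basics:ex_path} and \Cref{sing_basics:prefac}, that $\Theta_j(t) = \Psi_j(t)/\Psi_i(t)$ for every $t \in B_\epsilon(t_0) \setminus \{t_0\}$ and every $j \in \{1,\dots,d\}$, and that $\Theta$ --- hence each component $\Theta_j$ --- is continuous at $t_0$; this is precisely the property according to which the selected component $i$ was chosen, so each quotient $\Psi_j/\Psi_i$ is either continuous at $t_0$ or has a removable singularity there. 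Continuity of $\Theta_j$ at $t_0$ then says that the limit $g_j := \lim_{t\to t_0}\Psi_j(t)/\Psi_i(t)$ exists in $\C$ and equals $\Theta_j(t_0)$.

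Next, I would apply \Cref{nsa_prefac:limited_frac}, whose hypotheses are exactly the ones assumed here (with the selected component $i$): for every $\Dt \in \I\setminus\{0\}$ and every $j$, the quotient $\Psi_j^*(t_0+\Dt)/\Psi_i^*(t_0+\Dt)$ is limited --- the denominator does not vanish, as recorded in the proof of that lemma via \Cref{nsa_prefac:not_vanish} --- and its shadow is exactly $g_j = \lim_{t\to t_0}\Psi_j(t)/\Psi_i(t)$. If one prefers a self-contained derivation, the same conclusion follows from \Cref{nsa_basics:limits}: transfer the $\epsilon$--$\delta$ formulation of the limit $g_j$ and observe that every point of $\hal(t_0)\setminus\{t_0\}$, in particular $t_0+\Dt$, lies inside the standard $\delta$-ball about $t_0$, forcing $\Psi_j^*(t_0+\Dt)/\Psi_i^*(t_0+\Dt) \simeq g_j$.

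Combining the two identifications of $g_j$ gives, for every $\Dt \in \I\setminus\{0\}$ and every $j \in \{1,\dots,d\}$,
\[
\Theta_j(t_0) = g_j = \sh\!\left(\frac{\Psi_j^*(t_0+\Dt)}{\Psi_i^*(t_0+\Dt)}\right),
\]
which is the assertion. The degenerate indices need only a remark: if $j \notin A$ then $\Psi_j$ vanishes identically near $t_0$, so $\Theta_j \equiv 0$ there and $\Psi_j^*(t_0+\Dt) = 0$, making both sides $0$; \Cref{nsa_prefac:limited_frac} already handles all $j$ uniformly, so no separate case distinction is really needed.

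I do not expect a genuine obstacle: the theorem is a synthesis of the two preceding lemmas. The one point that deserves care is making explicit that the \emph{value} $\Theta_j(t_0)$ of the fractional continuation at the singularity literally equals the limit $g_j$ of the component quotients --- this is immediate from the continuity of $\Theta$ asserted in \Cref{sing_basics:ex_path}, but it is the hinge linking the standard side ($\Theta_j(t_0)$) to the non-standard side (the shadow of an infinitesimally perturbed quotient), and the statement would be vacuous without it.
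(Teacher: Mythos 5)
Your proposal is correct and follows essentially the same route as the paper: both identify $\Theta_j(t_0)$ with the limit of $\Psi_j/\Psi_i$ via the continuity of the fractional continuation, and then use \Cref{nsa_prefac:limited_frac} together with \Cref{nsa_basics:limits} and the uniqueness of the shadow (\Cref{nsa_basics:shadow}) to equate that limit with $\sh\bigl(\Psi_j^*(t_0+\Dt)/\Psi_i^*(t_0+\Dt)\bigr)$. The only cosmetic difference is that the paper invokes continuity in its non-standard form ($\Theta_j(t_0)\simeq\Theta_j(t_0+\Dt)$ via \Cref{nsa_basics:conti}) rather than routing explicitly through the standard limit $g_j$, which does not change the substance of the argument.
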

\begin{remark}
        In other words we can continuously extend the function $\Psi$ at $t_0$ with 
        \begin{align*}
                \Theta_j(t_0) &= \sh \left(\frac{\Psi_j^*(t_0+\Delta t)}{\Psi_i^*(t_0+\Delta t)} \right).
        \end{align*}
\end{remark}
\begin{proof}
        By assumption $\Theta(t)$ is continuous, so we remember \Cref{nsa_basics:conti} which says that $\Theta(t_0) \simeq \Theta(t_0 + \Dt)$ for all $\Dt \in \I$. With this argument, choose $\Dt \neq 0$. For non-zero values of $\Dt$ \Cref{nsa_prefac:not_vanish} guarantees proper non-vanishing values.
        \begin{align*}
                \Theta_j(t_0) &\simeq \Theta_j(t_0 + \Dt) \overset{\text{Def.}}{=} \frac{\Psi_j^*(t_0+\Delta t)}{\Psi_i^*(t_0+\Delta t)}
        \end{align*}
        Now we apply \Cref{nsa_prefac:limited_frac}: 
        \begin{align*}
            \frac{\Psi_j^*(t_0+\Delta t)}{\Psi_i^*(t_0+\Delta t)} \in \Lim
            \Rightarrow \sh \left(  \frac{\Psi_j^*(t_0+\Delta t)}{\Psi_i^*(t_0+\Delta t)}\right) \in \C. 
        \end{align*}
        With the uniqueness theorem of shadows, \Cref{nsa_basics:shadow}, and the properties on limits, \Cref{nsa_basics:limits}, we can conclude that:
        \[
               \Theta_j(t_0) = \lim_{t \rightarrow t_0} \Theta_j(t)  =  \lim_{t\rightarrow t_0} \frac{\Psi_j(t)}{\Psi_j(t)} = \sh \left(\frac{\Psi_j^*(t_0+\Delta t)}{\Psi_i^*(t_0+\Delta t)}\right).
        \]
\end{proof}
\begin{example}[Circle Intersection Revisited]
    \label{nsa_prefac:circ_inter_revis}
        We will now analyze the circle intersection of \Cref{sing:expl_sqrt} with our new method. Essentially, the example reads like this: let $\Psi: [-2,2] \rightarrow \C$ be defined by
        \[
        \Psi(t):= \begin{pmatrix}  \sqrt{1-t^2} \\ 0  \\  \sqrt{1-t^2} \cdot t \end{pmatrix}
        \]
        The function $\Psi$ is singular for $t_0 = 1$ and $t_1 = -1$ since $\Psi(t_0) = \Psi(t_1) = 0$. Consider the situation for $t_0 = 1$:
        
Firstly, we observe that the premisses of \Cref{sing_basics:ex_path} hold true. In the notion of the theorem: $A = \{1,3\}$ and both $\Psi_i(t_0)/\Psi_j(t_0)$ and $\Psi_j(t_0)/\Psi_i(t_0)$ have a removable singularity, so the choice for the selected component $i$ is arbitrary. Without loss of generality choose $i = 3$. Then we have for $\Dt \in \I \Smz, t_0 = 1$:
        \begin{align*}
                \frac{\Psi_1^*(t_0+\Delta t)}{\Psi_3^*(t_0+\Delta t)} &= \frac{\Psi_1^*(1+\Delta t)}{\Psi_3^*(1+\Delta t)} = \frac{\sqrt{1-(1+\Dt)^2}}{\sqrt{1-(1+\Dt)^2}\cdot(1+\Dt)} \\
                                                                      &=\frac{\sqrt{-\Dt^2 - 2 \Dt}}{\sqrt{-\Dt^2 - 2 \Dt}\cdot(1+\Dt)} =\frac{1}{1+\Dt}
        \end{align*}
        Where we used that $\sqrt{-\Dt^2 - 2 \Dt}$ is a proper (infinitesimal) non-zero hypercomplex number, and we can simply cancel the fraction. 

        Then we apply the shadow:
        \[
            \Theta_1(t_0)=\sh \left(  \frac{\Psi_1^*(t_0+\Delta t)}{\Psi_3^*(t_0+\Delta t)}\right) =   \sh \left(\frac{1}{1+\Dt}\right) = \frac{1}{1} = 1.
        \]
        Analogously we can calculate the other components:
        \begin{align*}
            \Theta_2(t_0) &=  \sh \left(  \frac{\Psi_2^*(t_0+\Delta t)}{\Psi_3^*(t_0+\Delta t)} \right) =  \sh \left(\frac{0}{1+\Dt}\right) = 0\\ \Theta_3(t_0) &=\sh \left(  \frac{\Psi_3^*(t_0+\Delta t)}{\Psi_3^*(t_0+\Delta t)} \right) =  \sh \left(\frac{1+\Dt}{1+\Dt}\right) = 1.
        \end{align*}
        So the $C^0$--continuation of our function $\Psi$, which we denoted by $[\Theta]$ is given in the singularity $t_0 = 1$ by:
        \[
        [\Theta(t_0)] =  \begin{pmatrix} 1 \\ 0  \\  1  \end{pmatrix}
 \]
 and $[\Psi(t)]$ for all $t \neq 1$. A visualization of the example can be found in \Cref{fig:rem_sing_stable_circles}.
\begin{figure}
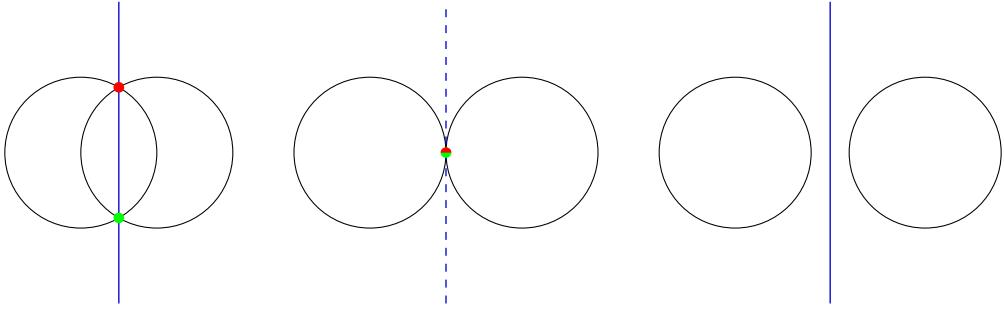

    \centering
\drawMcirc
\caption{$C^0$-continuation of the disjoint circle intersection.}
\label{fig:rem_sing_stable_circles}
\end{figure}
\end{example}
Next stop will be a more direct approach to removing singularities. While \Cref{sing_basics:ex_path} goes the way of dehomogenization there is a more direct way to obtain the desired desingularized function using hypercomplex numbers.
\begin{theorem}[Normalization and Continous Paths]
        \label{nsa_prefac:shadow_cont_abs}
        Let $\Psi: (0,1) \rightarrow \C^{d+1}$ be a continuous function and $t_0 \in (0,1)$. Let $\Psi$ fulfill the following premisses  
        \begin{enumerate}
            \item $\Psi(t)$ is not vanishing on $\hal(t_0) \setminus \{t_0\}$
            \item $ \exists L \in \C^{d+1} \Smz : L \simeq \frac{\Psi(t_0 + \Dt)}{\norm{\Psi(t_0 + \Dt)}} \simeq \frac{\Psi(t_0 + \Dt')}{\norm{\Psi(t_0 + \Dt')}}\quad \forall \Dt, \Dt' \in \I \Smz$.
        \end{enumerate}
        Then there exists $\epsilon \in \R, \epsilon > 0$ and a continuous function $\Theta(t): B_\epsilon(t_0) \rightarrow \C^{d+1} \Smz$ such that
        \[
            [\Psi(t)] = [\Theta(t)]
        \]
        for all $t \in B_\epsilon \setminus \{t_0\}$. 
        
        
This means that $[\Theta]$ is a $C^0$--continuation of $[\Psi]$ at $t_0$.
\end{theorem}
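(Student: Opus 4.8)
The plan is to take for $\Theta$ the pointwise Euclidean normalization of $\Psi$, namely $\Theta(t):=\Psi(t)/\norm{\Psi(t)}$ for $t\neq t_0$, together with the value $\Theta(t_0):=L$, and then to show that this $\Theta$ is defined and continuous on a sufficiently small real ball $B_\epsilon(t_0)$ and agrees with $[\Psi]$ off $t_0$.

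First I would produce the ball on which the normalization even makes sense as a standard function. Premise~1 says $\Psi$ does not vanish on $\hal(t_0)\setminus\{t_0\}$. Combining the remark following \Cref{nsa_basics:const_fct} with a short transfer argument — if for \emph{every} real $\epsilon>0$ there were a $t$ with $0<\abs{t-t_0}<\epsilon$ and $\Psi(t)=0$, then by transferring this (existential in $t$, universal in $\epsilon$) sentence to an infinitesimal $\epsilon$ we would obtain a point of $\hal(t_0)\setminus\{t_0\}$ at which $\Psi^*$ vanishes, contradicting Premise~1 — one gets a \emph{real} $\epsilon>0$, which we may also take small enough that $B_\epsilon(t_0)\subset(0,1)$, such that $\Psi(t)\neq 0$ for all $t\in B_\epsilon(t_0)\setminus\{t_0\}$. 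On this punctured ball $\norm{\Psi(t)}$ is a continuous, strictly positive real function, so $\Theta(t)=\Psi(t)/\norm{\Psi(t)}$ is continuous there, satisfies $\norm{\Theta(t)}=1$ (hence $\Theta(t)\neq 0$), and is a non-zero scalar multiple of $\Psi(t)$; therefore $[\Theta(t)]=[\Psi(t)]$ for every $t\in B_\epsilon(t_0)\setminus\{t_0\}$.

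It remains to check that the extended $\Theta$ is continuous at $t_0$. I would invoke the non-standard characterization of continuity \Cref{nsa_basics:nsa_topo_conti}, applied to the $\C^{d+1}$-valued map $\Theta$ (infinitesimal proximity being that of the Euclidean metric, \Cref{halo_and_matric}): it suffices to show $\Theta^*(s)\simeq L$ for all $s\in\hal(t_0)$. For $s=t_0$ this is the definition; for $s\neq t_0$, put $\Dt:=s-t_0\in\I\Smz$, and by transferring the defining formula of $\Theta$ on the punctured ball (legitimate since $\abs{\Dt}<\epsilon$) one gets $\Theta^*(s)=\Psi^*(t_0+\Dt)/\norm{\Psi^*(t_0+\Dt)}$, which is $\simeq L$ by Premise~2. (Equivalently, one could apply \Cref{nsa_basics:sqeezing_limits} component-wise to the $\C$-valued maps $t\mapsto\Theta_j(t)$ on the punctured interval, since Premise~2 in particular yields $\Theta_j^*(t_0+\Dx)\simeq\Theta_j^*(t_0-\Dx)\simeq L_j$ for a fixed positive $\Dx\in\I_\R$.) One also records $\norm{L}=1$, since $\norm{L}\simeq\norm{\Psi^*(t_0+\Dt)/\norm{\Psi^*(t_0+\Dt)}}=1$ by \Cref{nsa_basics:sh_prop} and $\norm{L}$ is standard. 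Thus $\Theta$ is continuous on $B_\epsilon(t_0)$, maps into $\C^{d+1}\Smz$, and agrees with $[\Psi]$ off $t_0$, so $[\Theta]$ is the desired $C^0$-continuation.

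The only genuinely delicate point is the first step: turning the halo-level non-vanishing in Premise~1 into non-vanishing on an honest real punctured neighborhood, so that $\Psi/\norm{\Psi}$ is available as a standard continuous function in the first place. Everything afterwards is a routine application of the non-standard criterion for continuity, because Premise~2 is precisely the hypothesis those criteria consume. A secondary bookkeeping nuisance is that $\norm{\cdot}$ is the Euclidean norm on $\C^{d+1}$, so one must pass through \Cref{nsa_basics:sh_prop} (or argue component-wise) when commuting $\norm{\cdot}$ with the star-transform and the shadow.
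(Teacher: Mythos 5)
Your proposal is correct and follows essentially the same route as the paper: both define $\Theta$ as the pointwise normalization $\Psi/\norm{\Psi}$, use Premise~1 (via the remark after \Cref{nsa_basics:const_fct}, i.e.\ exactly your transfer argument) to obtain a real punctured ball on which this is defined and continuous, and then use the non-standard characterization of limits/continuity together with Premise~2 to extend $\Theta$ at $t_0$ by the standard value $L$ (the paper writes it as the shadow of the normalized perturbed value, which coincides with your $L$), concluding $[\Theta]=[\Psi]$ off $t_0$.
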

\begin{proof}
    We will only consider the case where $\Psi(t)$ vanishes at $t_0$. Otherwise the function $\Theta$ can be simply defined by $ \Theta(t) = \frac{\Psi(t)}{\norm{\Psi(t)}}$ and is continuous as quotient of continuous non-vanishing functions around $t_0$. The functions $[\Psi]$ and $[\Theta]$ coincide in projective space since $\norm{\Psi(t)}$ is a non-zero scalar factor for all $t \neq t_0$ and so the equivalence classes coincide in a projective setting.

    So let $\Psi$ vanish at $t_0$. Since the function does not vanish on $\hal(t_0) \setminus \{t_0\}$, by assumption 1), we can apply \Cref{nsa_basics:const_fct} which says that there is a $\R \ni \epsilon > 0$ such that $\Psi$ does not vanish on $B_\epsilon \setminus \{t_0\}$. So the function $\Theta(t): B_\epsilon \setminus \{t_0\} \rightarrow \C, t \mapsto \frac{\Psi(t)}{\norm{\Psi(t)}}$ is defined and as fraction of continuous functions again continuous. 

The continuous function $\Theta(t): B_\epsilon(t_0) \rightarrow \C^{d+1} \Smz$ is defined, for an arbitrary $\Dt \in \I \Smz$, as follows:
        \[
         \Theta(t):=\begin{cases}
             \frac{\Psi(t)}{\norm{\Psi(t)}}, &\text{ if } t \neq t_0\\
             \sh\left(\frac{\Psi(t+\Dt)}{\norm{\Psi(t+\Dt)}} \right), &\text{ if } t = t_0
        \end{cases}
        \]

    Now we can use \Cref{nsa_basics:limits}, the hypercomplex notion of limits, that says
    \begin{align*}
        &\lim_{t \rightarrow t_0} \Theta(t) = L \in \C, \; t \in B_\epsilon(t_0) \setminus \{t_0\}
        \\ &\Leftrightarrow \Theta(t) \simeq L \; \forall t \in (B_\epsilon)^*: t \simeq t_0,\; t \neq t_0
    \end{align*}
    which is equivalent to prerequisite 2) since:
    \begin{align*}
        & \Theta(t) \simeq L \; \forall t \simeq t_0, t \neq t_0  \\ \Leftrightarrow \; &\Theta(t) \simeq L  \; \forall t \in \hal(t_0) \setminus \{t_0\} \\ \Leftrightarrow \;  &\Theta(t_0+\Dt) = \Theta(t_0+ \Dt')  \; \forall \Dt, \Dt' \in \I \Smz 
    \end{align*}
    this means we can continuously extend the function $\Theta$ to $t_0$ with $\Theta(t_0) = L$. 

    Putting everything together: 
by construction $\Theta$ is continuous, with continuous extension $\Theta(t_0) = L$. Since the projection $[ \cdot ]: \C^{d+1} \rightarrow \CP^d$ is continuous by definition, the function $[\Theta]: B_\epsilon(t_0) \rightarrow \CP^d$ is continuous as composition of continuous functions. Since rescaling $\Psi$ by its (non-zero) norm does not change the equivalence class in a projective setting it holds true:
    \[
        [\Psi(t)] = [\Theta(t)] \quad \forall t \in B_\epsilon \setminus \{t_0\}
    \]
So $[\Theta]$ is a $C^0$--continuation of $[\Psi]$ at $t_0$.
\end{proof}
\begin{remark}
    In the previous theorem, the domain for $t_0$ can easily be extended from $(0,1)$ to its closure $[0,1]$ with the same arguments. For the sake of readability we omitted the generalization.
\end{remark}
\begin{definition}[Normalized Function]
    Let $t_0 \in \R$ and $f: B_\epsilon(t_0) \rightarrow \C^n \Smz$ be continuous. We call the function 
    \[
        \normalize{f}(t) := \frac{f(t)}{\norm{f(t)}}
    \]
    the \textit{normalization of $\mathit{f}$}.
\end{definition}
       \begin{remark}
           We usually define normalization via the Euclidean norm $\norm{\cdot}_2$ but actually it is not important which norm we use. By the equivalence of the norms in $\C^d$ we always will obtain appreciable representatives with unit length one (according to the chosen norm). For illustrating examples it is very practical to use the maximum norm and normalize the vector by the absolute value of the maximal element. As it turns out, we actually can also divide by the absolute value of the maximum but by the value itself! The norm of the maximal value and the maximal value itself only differ by the complex phase, which is an appreciable number. Therefore, we simply can pick the $\argmax$ of the maximum norm and divide by the maximal component.
       \end{remark}
\begin{corollary}[Projective Shadow and Normalized Functions]
    \label{nsa_pg:psh_normalize}
    Let $t_0 \in \R$ and $f: B_\epsilon(t_0) \rightarrow \C^{d+1} \Smz$ be continuous. Then it holds true:
    \[
        [\normalize{f}(t)] = \psh([f(t+\Dt)]) \; \forall \Dt \in \I
    \]
\end{corollary}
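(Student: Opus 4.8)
The plan is to reduce everything to the definition of the projective shadow (\Cref{nsa_pg:psh}) together with the non-standard characterization of continuity (\Cref{nsa_basics:conti}, or its topological/metric form via \Cref{halo_and_matric}). Throughout, $t$ is a standard real in $B_\epsilon(t_0)$ and $\Dt \in \I$ is arbitrary; by the triangle inequality transferred to $\Rs$ the point $t+\Dt$ still lies in $(B_\epsilon(t_0))^*$, so $f^*(t+\Dt)$ is defined, and since $f$ takes values in $\C^{d+1}\Smz$ it is non-zero there by universal transfer. Hence $[f(t+\Dt)]$, read as $[f^*(t+\Dt)]$, is a well-defined point of $\CsP^d$.

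First I would observe that the normalization $\normalize{f} = f/\norm{f}$ is continuous on $B_\epsilon(t_0)$, being the quotient of the continuous function $f$ by the continuous, nowhere-vanishing scalar function $\norm{f}$. The identity $\normalize{f}(s) = f(s)/\norm{f(s)}$ holds for every standard $s \in B_\epsilon(t_0)$, so by universal transfer it holds for $s = t+\Dt$ as well. Consequently $\normalize{f}^*(t+\Dt) = f^*(t+\Dt)/\norm{f^*(t+\Dt)}$ represents $[f^*(t+\Dt)]$ and has norm exactly $1$, hence is an appreciable representative in the sense required by \Cref{nsa_pg:psh}. In particular each of its components has absolute value at most $1$ and is therefore limited, so its componentwise shadow exists by \Cref{nsa_basics:shadow}.

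Next, since $\normalize{f}$ is continuous at the standard point $t$, the non-standard continuity criterion gives $\normalize{f}^*(t+\Dt) \simeq \normalize{f}(t)$. As $\normalize{f}(t)$ is a standard vector and $\normalize{f}^*(t+\Dt)$ is limited, the uniqueness part of \Cref{nsa_basics:shadow} (applied componentwise) yields $\sh(\normalize{f}^*(t+\Dt)) = \normalize{f}(t)$. Feeding the appreciable representative $\normalize{f}^*(t+\Dt)$ into the definition of the projective shadow then gives
\[
    \psh([f(t+\Dt)]) = [\sh(\normalize{f}^*(t+\Dt))] = [\normalize{f}(t)],
\]
which is the assertion; the choice of $\Dt$ played no role, matching the universal quantifier in the statement.

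I do not expect a genuine obstacle here. The only points that require care are (i) verifying that $t+\Dt$ stays in the transferred domain and that $f^*(t+\Dt)\neq 0$ there, both handled by transfer, and (ii) making sure the projective shadow is computed through a representative that is genuinely appreciable — which is precisely why one normalizes to unit norm and works with $\normalize{f}^*(t+\Dt)$ rather than with $f^*(t+\Dt)$ itself, whose norm could a priori be infinitesimal and whose shadow would then be the zero vector, outside $\CP^d$.
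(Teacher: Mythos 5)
Your argument is correct and is essentially the paper's own reasoning: the paper dismisses this as ``obvious by the definition of $\psh$'' (\Cref{nsa_pg:psh}), and what you have written is exactly that definition unwound — take the unit-norm (hence appreciable) representative, use non-standard continuity of $\normalize{f}$ at the standard point $t$ plus uniqueness of the shadow, and project. The transfer/domain checks you add are sound but routine; no gap.
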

\begin{proof}
    Obvious by \Cref{nsa_pg:psh}.
\end{proof}
\begin{remark}
    This means that condition 2.\ of \Cref{nsa_prefac:shadow_cont_abs} 
    \[
             \exists L\in \C^{d+1}  : L \simeq \frac{\Psi(t_0 + \Dt)}{\norm{\Psi(t_0 + \Dt)}} \simeq \frac{\Psi(t_0 + \Dt')}{\norm{\Psi(t_0 + \Dt')}}\quad \forall \Dt, \Dt' \in \I \Smz
    \]
can be written as 
    \[
        \exists L \in \CP^{d} : [L] = \psh(\Psi(t_0 + \Dt)) = \psh(\Psi(t_0 + \Dt')) \quad \forall \Dt, \Dt' \in \I \Smz.
    \]
\end{remark}
Instead taking limits in the space $\C^{d+1}$ and then use a projection to $\CP^d$ one can also directly use the quotient space continuity.
\begin{remark}
    This is an important step towards a more algorithmic approach to resolve singularities. The theorem allows us to directly compute the singularity free representation of the continuous path. 
   If we know that a function is continuous up to a singular point we can use  \Cref{nsa_basics:sqeezing_limits} to check whether the right hand side and left hand side limits coincide with an arbitrary $\Delta t$. This gives a practical algorithm to test whether a singularity can be resolved.
       %
\end{remark}
       \begin{remark}
    Just to make the important point very clear: since our perturbations are infinitesimal, the results we obtain from the proposed method in \Cref{nsa_prefac:shadow_cont_abs} are \textbf{exact}. Our perturbation does \textbf{not} entail (additional) numeric error.
       \end{remark}
        
\begin{example}[Premisses Cannot be Neglected]
    Here is an example that should explain why we had to stipulate the second premise of \Cref{nsa_prefac:shadow_cont_abs}, the congruency of the projective shadows. Define $\Psi: [-1,1] \rightarrow \R^2$ as
        \[
                \Psi(t):= \begin{pmatrix} t \\ |t| \end{pmatrix}
\]
Although both components are continuous we cannot apply \Cref{nsa_prefac:shadow_cont_abs} and \Cref{nsa_pg:psh_normalize} because the projective shadows do note coincide. Pick an arbitrary positive infinitesimal hyperreal number $\Dt$. Then for $t_0 = 0$ it holds true:
\[
\psh (\Psi(t_0+\Dt))= \sh \left(\frac{1}{|\Dt|} \begin{pmatrix} \Dt \\ |\Dt|   \end{pmatrix} \right) =   \sh \left(\frac{1}{\Dt} \begin{pmatrix} \Dt \\\Dt   \end{pmatrix} \right) =\begin{pmatrix} 1 \\ 1   \end{pmatrix} 
\]
But for the negation of $\Dt$:\[
\psh (\Psi(t_0-\Dt))= \sh \left(\frac{1}{|-\Dt|} \begin{pmatrix} -\Dt \\ |-\Dt|   \end{pmatrix} \right) =   \sh \left(\frac{1}{\Dt} \begin{pmatrix} -\Dt \\ \Dt   \end{pmatrix} \right) =\begin{pmatrix} -1 \\ 1   \end{pmatrix} 
\]
This is not really astonishing since $t/|t|$ has a non-removable singularity at $t_0 = 0$.
\end{example}
\section{Stability of a Solution}
\label{sec:stability_of_sol}
The concept of quasi continuity was so far only developed along a predefined and only time dependent path. Unfortunately, this is not the whole picture. There are situations where one can resolve a singularity of a construction along a fixed path but as soon as the defining free or semi--free elements on which an object may depend are perturbed slightly the construction may behave unstable or even discontinuous. One particular situation is again the intersection of two circles: this time we do not investigate the tangential situations as in \Cref{sing:expl_sqrt} but rather an even more degenerate one: the unification of the two circles. 

\begin{example}[Unified Circles Intersection]
    \label{nsa_pg:expl:univ_circles}
Again, we will intersect two circles of the same radius and draw the connecting line of the two intersections. Consider two (non-degenerate) circles $C,D$ with radius $r > 0$ and disjoint midpoints $M_C$ and $M_D$. If one now moves $M_D$ along a straight segment (parameterized using the unit inverval)  passing though $M_C$ at some time $t_0$, the two circles unify and the intersections of $C$ and $D$ expand from 4 points (counting $\II$ and $\JJ$) to an infinite set. Along a linear path it is reasonable to resolve the singularity occurring at $t_0$. But if one chooses to turn at $t_0$ and move $M_D$ continuously to a different direction the intersection points will behave discontinuous. See the pictures in \Cref{nsa_pg:univ_stable} and \Cref{nsa_pg:univ_unstable} for an illustration of the situation.
\end{example}

As shown in \cite{kortenkamp2001grundlagen}, geometric constructions can be partitioned to (semi--)free and dependent elements. Then (semi--)free elements can be used to alter the configuration. One can encapsulate the movement of (semi--)free elements into a movement alone a straight line in a (possibly high--dimensional) space. We will denote the start point of the movement by $A$ and the end point by $B$. Diverging from \cite{kortenkamp2001grundlagen}, we will view $A$ and $B$ not as points in $\C^{k+1}$ but rather as projective objects in $\CP^k$. Dynamic movements can then be modeled using the function $P: [0,1] \rightarrow \CP^k$ with $v(t) = t\cdot A + (t-1) \cdot B$. For further details, we refer the reader to the article \cite{kortenkamp2001grundlagen}. We will now discuss the continuity of a construction on the neighborhood around the elements of the image of $P$.

\begin{definition}[Extended $C^0$-continuation]
    \label{nsa_pg:extd_c0_cont}
    Let $V \subset \CP^k$ be open and let $\Psi: W \subset V \rightarrow \CP^d$ continuous. If there is an open set $Y$ with $W \subsetneq Y \subset V$ and a continuous function $\hat \Psi: Y \rightarrow \CP^d$ with $\hat \Psi(w) = \Psi(w) \forall w \in W$, then we call $\Psi$ \textit{extended $\mathit{C^0}$--continuable on $Y$}
   and 
   $\hat \Phi$ the \textit{extended $\mathit{C^0}$--continuation of $\mathit{\Psi}$ on $Y$}.

   If even holds true that $Y = V$ then we call $\Psi$ \textit{extended quasi continuous on $\mathit{V}$}.
\end{definition}
An illustration of the situation can be found in \Cref{nsa_pg:ext_conti_fig}.
\begin{remark}
   It it obvious that if a function is extended $C^0$-continuable on a certain domain, then it is also $C^0$-continuable on a path inside the domain. 
\end{remark}
\begin{figure}[]
   \centering
   \includegraphics[width=.5\textwidth]{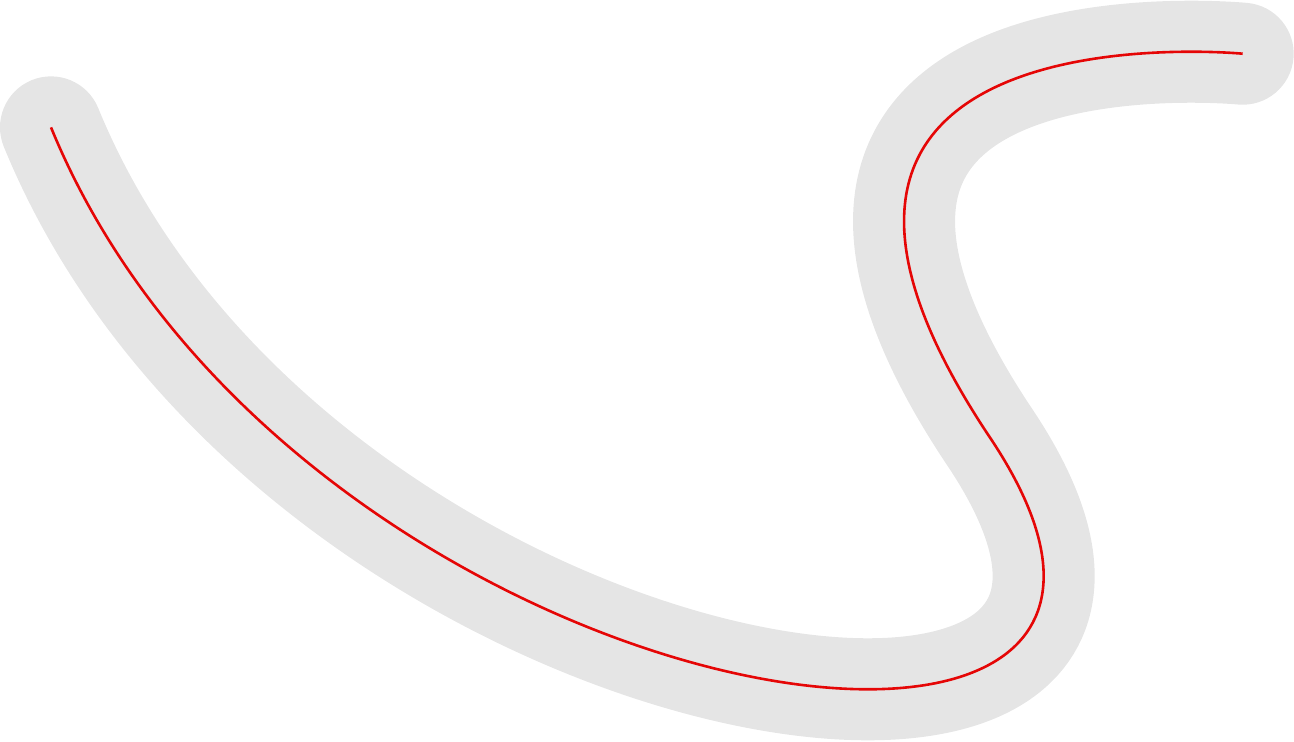}
   \caption{\textbf{The difference between a $C^0$--continuation and an extended $C^0$--continuation quasi continuity:} while the first defines continuity only along a curve (red in the picture), the extended $C^0$--continuation requires continuity in an open neighborhood (grey) around the curve. The neighborhood might be restricted by additional constraints for semi--free elements.}
   \label{nsa_pg:ext_conti_fig}
\end{figure}
\begin{remark}
    In \cite{kortenkamp2001grundlagen} it is shown that the movement of dependent elements induced by free elements along a fixed path in $\V$ can be modeled using only the time dimension. 
    
    Additionally \Cref{nsa_pg:extd_c0_cont} also ensures continuity of the movement of a dependent element if the (semi--)free elements move continuously not only depending on a time parameter, along a fixed path, but even more generally in in space. This covers also cases like in \Cref{nsa_pg:univ_stable} and \Cref{nsa_pg:univ_unstable}.
    
    Essentially $v$ models the movement of the (semi--)free elements and $\Psi$ the movement of a dependent element implied by the variation of the (semi--)free elements. The function composition $\phi = \Psi \circ v$ then models the complete movement only dependent on a time parameter.

\end{remark}
\begin{theorem}[Normalization and Extended Continous Paths]
        \label{nsa_prefac:shadow_cont_extended}
        Let $v: [0,1] \rightarrow \C^{k+1}$ be continuous, $Y$ be open with  $v([0,1]) \subset Y \subset \C^{k+1}$, $\Psi: Y \rightarrow \C^{d+1}$ a function that is continuous on $v([0,1])$, \ie $\Psi|_{v([0,1])}: v([0,1]) \rightarrow \C^{d+1}$ is a continuous function, and let $t_0 \in [0,1]$. 
        Let $\Psi$ fulfill the following premisses:
        \begin{enumerate}
            \item $\exists \epsilon \in \R, \epsilon > 0$: $\Psi(v(t))$ does not vanish on a dense subset $X$ of $B_\epsilon(v(t_0)) \cap Y$
            \item $\Psi$ can be extended on $(\overline X)^*$ such that $\exists L \in \C^{d+1} \Smz:$ 
                \begin{align*}
                    L \simeq \frac{\Psi(v(t_0) + \Dv)}{\norm{\Psi(v(t_0) + \Dv)}} \simeq \frac{\Psi(v(t_0) + \Dv')}{\norm{\Psi(v(t_0) + \Dv')}} \\ \forall \Dv, \Dv' \in \I^{k+1}: v(t_0) + \Dv, v(t_0) + \Dv' \in (\overline X)^*.
                \end{align*}
        \end{enumerate}
        Then there exists a continuous function $\Theta(t): B_\epsilon(v(t_0)) \cap Y \rightarrow \C^{d+1} \Smz$ such that
        \[
            [\Psi(v(t))] = [\Theta(v(t))]
        \]
        for all $t \in X$.       
        
This means that $[\Theta]$ is an extended $C^0$--continuation of $[\Psi]$ on the open set $\inter(\overline{B_\epsilon(v(t_0)) \cap Y})$.
\end{theorem}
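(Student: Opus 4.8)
The plan is to mirror the proof of \Cref{nsa_prefac:shadow_cont_abs}, with the single exceptional point $t_0$ replaced by a transverse neighbourhood of $v(t_0)$ and the hypothesis ``$\Psi$ non-vanishing on $\hal(t_0)\setminus\{t_0\}$'' replaced by ``$\Psi$ non-vanishing on a set $X$ dense in the ball''. I would first dispose of the case $\Psi(v(t_0))\neq 0$: there $\Psi/\norm{\Psi}$ is continuous on a small ball around $v(t_0)$ (its continuity off the curve being the content of the extendability in premise 2 together with premise 1) and already gives $[\Psi]=[\Theta]$, so the remark after \Cref{nsa_basics:const_fct} plus a shrink of $\epsilon$ finishes it. So assume $\Psi(v(t_0))=0$. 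Using premise 1 fix the real $\epsilon>0$ and the dense set $X\subset U:=B_\epsilon(v(t_0))\cap Y$ on which $\Psi$ does not vanish, and set
\[
  \Theta(w):=
  \begin{cases}
    \dfrac{\Psi(w)}{\norm{\Psi(w)}}, & w\in X,\\
    L, & w=v(t_0),
  \end{cases}
\]
where $L$ is the vector of premise 2; that premise is exactly the statement that $L=\sh\!\bigl(\Psi(v(t_0)+\Dv)/\norm{\Psi(v(t_0)+\Dv)}\bigr)$ exists, is non-zero, and is independent of the admissible $\Dv\in\I^{k+1}\Smz$, so $\Theta(v(t_0))$ is well defined; at the remaining points of $\inter(\overline U)$ one extends $\Theta$ by the analogous shadow of the continuous extension of $\Psi/\norm{\Psi}$ to $(\overline X)^*$ furnished by premise 2.

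The core step is to show $\Theta$ is continuous at $v(t_0)$; everywhere else continuity is inherited from $\Psi/\norm{\Psi}$ and its extension. By \Cref{nsa_basics:nsa_topo_conti} it suffices that $\Theta(w)\simeq L$ for every $w$ in the domain with $w\simeq v(t_0)$. Two ingredients combine. First, density transfers to enlargements: the standard sentence ``for every $\delta>0$ there is $x\in X$ with $\abs{x-w}<\delta$'' applied to an infinitesimal $\delta$ --- the same elementary transfer used in the proof of \Cref{nsa_basics:sqeezing_limits} --- produces $x\in X^*$ with $x\simeq w$, hence $x\in\hal(v(t_0))\cap(\overline X)^*$. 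Second, premise 2 says precisely that $\Psi^*(x)/\norm{\Psi^*(x)}\simeq L$ for any such $x$. If $w\in X^*$ then $\Theta(w)=\Psi^*(w)/\norm{\Psi^*(w)}$ and $w$ itself lies in $\hal(v(t_0))\cap(\overline X)^*$, so $\Theta(w)\simeq L$ directly; if $w\notin X^*$ then by construction $\Theta(w)$ is the shadow of $\Psi^*/\norm{\Psi^*}$ over $\hal(w)$, which meets $X^*$ by the density step, so again $\Theta(w)\simeq L$. Thus $\Theta(\hal(v(t_0)))\subset\hal(L)$, i.e.\ $\Theta$ is continuous at $v(t_0)$.

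Finally I would assemble the conclusion. Being continuous and non-vanishing on $\inter(\overline U)$, $\Theta$ composed with the continuous quotient map $\C^{d+1}\Smz\to\CP^d$ gives a continuous $[\Theta]$; and for $w\in X$ the vectors $\Psi(w)$ and $\Theta(w)$ differ by the non-zero scalar $\norm{\Psi(w)}$, so $[\Psi(w)]=[\Theta(w)]$, equivalently $[\Theta(w)]=\psh([\Psi(w+\Dv)])$ by \Cref{nsa_pg:psh_normalize}. By \Cref{nsa_pg:extd_c0_cont} this is exactly the assertion that $[\Theta]$ is an extended $C^0$--continuation of $[\Psi]$ on the open set $\inter(\overline{B_\epsilon(v(t_0))\cap Y})$.

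I expect the main obstacle to be the bookkeeping around $X$ and its enlargement: one has to be sure that $\Psi/\norm{\Psi}$ is genuinely continuous on the portion of $\overline X$ that the construction uses --- since $\Psi$ is a priori only continuous on the curve $v([0,1])$, this transverse continuity must be extracted from the extendability hypothesis in premise 2 --- and that $(\overline X)^*$ behaves enough like $\overline{X^*}$ for the density step to control the whole halo $\hal(v(t_0))$ and not merely its trace on $v([0,1])$. Modulo that, the argument is a routine rewrite of the one-dimensional case \Cref{nsa_prefac:shadow_cont_abs}.
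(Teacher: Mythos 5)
Your proposal is correct and follows essentially the same route as the paper: normalize $\Psi$ on the dense set $X$, extend to the closure (the paper does this via sequential limits, you via shadows of the non-standard extension, which amounts to the same thing), invoke premise 2 together with the non-standard characterization of continuity to get continuity at $v(t_0)$, and then pass to projective space to obtain the extended $C^0$-continuation. Your extra care with the density-transfer step and the separate treatment of the case $\Psi(v(t_0))\neq 0$ only makes explicit what the paper's terser proof leaves implicit.
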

\begin{proof}
    By assumption 1.\ the function 
    \[
        \Theta(v):= 
        \begin{cases}
            \dfrac{\Psi(v)}{\norm{\Psi(v)}}, &\text{if } v \in X\\
            \lim_{n\rightarrow \infty} \dfrac{\Psi(v_n)}{\norm{\Psi(v_n)}}, &\text{if } v \in \overline X  \setminus X, \text{ with } v_n \rightarrow v \; (n \rightarrow \infty)
        \end{cases}
    \]
    is well defined. By transfer we know that $\Psi$ does not vanish on $\hal(v(t_0)) \cap Y^*$ and so $\Theta$ does not vanish either. By \Cref{nsa_basics:conti}, assumption 2.\ assures the continuity of $\Theta$ at $v(t_0)$ and we can continuously extend $\Theta$ at $v(t_0)$ with $\sh(\Psi(v(t_0) + \Dv))$ with an arbitrary $\Dv$ that fulfills the requirements of assumption 2. Back in projective space, it holds true
    \[
        [\Psi(v)] = [\Theta(v)] \quad \forall v \in X.
    \]
    and therefore $[\Theta]$ is a $C^0$-continuation of $[\Psi]$ on the open set $\inter(\overline{B_\epsilon(v(t_0)) \cap Y})$.
\end{proof}
\begin{remark}
    Again we can rewrite condition 2.\ of \Cref{nsa_prefac:shadow_cont_extended} as follows:
   \[ 
       \exists L \in \CP^{d} : [L] = \psh(\Psi(v(t_0) + \Dv)) = \psh(\Psi(v(t_0) + \Dv')).
   \]
\end{remark}
\begin{figure}
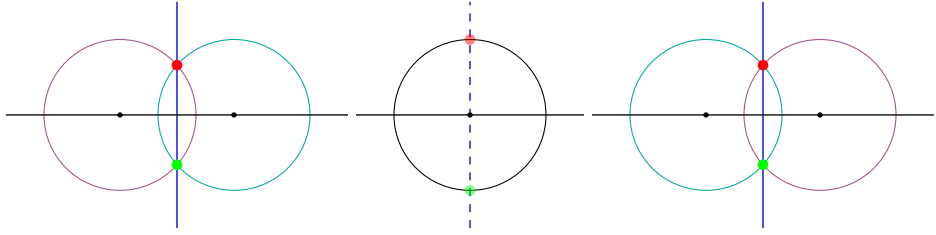

    \centering
    \stablecircles
\caption{\textbf{The Unification of Circles (semi-free)}: the connecting line of the two circle intersections is extended $C^0$-continuable if one assumes the circle centers bound to a common line.}
\label{nsa_pg:univ_stable}
\end{figure}
\begin{figure}
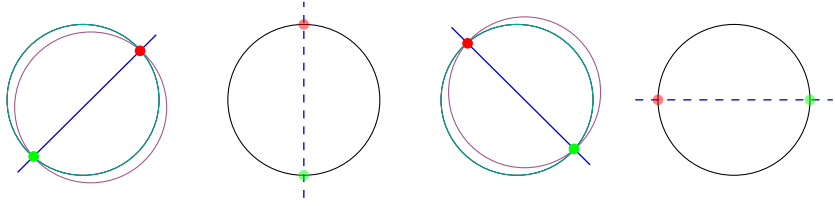

    \centering
    \unstablecircles
\caption{\textbf{The Unification of Circles (free)}: if the centers of the circles are not bound to a one dimensional subspace the connecting line of the intersection behaves discontinuously, thus the problem is not extended continuable.}
\label{nsa_pg:univ_unstable}
\end{figure}

\subsection{Application to Singularities in Geometric Constructions}
We will now describe two algorithms which will use \Cref{nsa_prefac:shadow_cont_abs} and \Cref{nsa_prefac:shadow_cont_extended} to automatically remove singularities in geometric constructions.

First we will analyze the less general $C^0$-continuation: \Cref{algo:c0-conti} takes as input a function $\Psi$ with $\Psi(t_0) = 0$ and returns a $C^0$-continuation at the given point $t_0$ if possible. The correctness of the algorithm is obvious by \Cref{nsa_prefac:shadow_cont_abs}, if the non-vanishing property of the theorem is fulfilled and as shown in ``\textsc{Grundlagen Dynamischer Geometrie}'' (\cite{kortenkamp2001grundlagen}), singularities of geometric constructions are isolated and therefore we only have to check whether the left and right side limit coincide (see \Cref{nsa_basics:sqeezing_limits}).
\begin{algorithm}[h]
    \DontPrintSemicolon
    \SetAlgoLined
    \KwIn{A continuous function $\Psi: (0,1) \rightarrow \C^{d+1}$ and $t_0 \in (0,1)$}
    \KwOut{$C^0$-Continuation of $[\Psi]$ at $t_0$}
    \Begin{
    Pick arbitrary $\Dt \in \I \Smz$\;
    $L \leftarrow \sh\left(\frac{\Psi(t_0 + \Dt)}{\norm{\Psi(t_0 + \Dt)}}\right)$\;
    $L^+ \leftarrow \frac{\Psi(t_0 + \Dt)}{\norm{\Psi(t_0 + \Dt)}}$\;
    $L^- \leftarrow \frac{\Psi(t_0 - \Dt)}{\norm{\Psi(t_0 - \Dt)}}$\;
    \If{$L =$ NaN}{
        \KwRet{ERROR: vanishing $\Psi$}
    }
    \ElseIf{$L \not \simeq L^+$ or $L \not \simeq L^-$}{
        \KwRet{ERROR: discontinuous}
    }
    \Else{
        \KwRet{$[\Psi(t_0)] = L$}
    }
    }
    \caption{$C^0$-Continuation Algorithm}
    \label{algo:c0-conti}
\end{algorithm}

\Cref{algo:c0-extended-conti} checks additionally whether the solution is stable under spatial perturbations and therefore a continuation is extended continuous. Since this is a randomized algorithm one might develop a probability estimation on how certain one can be that the solutions is actually continuous. Usually this is done using the Schwartz–Zippel lemma (see for example \cite{mitzenmacher2005probability}, or including radical expressions: \cite{tulone2000randomized}). We will defer the development of randomized continuity testing using infinitesimal deflection to future work and use \Cref{algo:c0-extended-conti} as heuristic algorithm. In practice it turns out that the algorithm quickly can decide if a given singularity $t_0$ is removable or not by only checking a few randomized inputs.
\begin{algorithm}[h]
    \DontPrintSemicolon
    \SetAlgoLined
    \KwIn{Continuous functions $v: [0,1] \rightarrow \C^{k+1}$, $\Psi: Y \rightarrow \C^{d+1}$ with open set $Y$, $v([0,1]) \subset Y \subset \C^{k+1}$ and $t_0 \in [0,1]$.}
    \KwOut{Extended $C^0$-Continuation of $[\Psi]$ at $t_0$}
    \Begin{
        Pick $n \in \N$ arbitrarily $\Dv_i \in \I^{k+1} \Smz$ ($i \in 1 \ldots n$), $\Dv_i \neq \Dv_j \, (i \neq j)$ such that $v(t_0) + \Dv_i$ is in the preimage of $\Psi$\;
        Evaluate $\Psi_i := \frac{\Psi(v(t_0) + \Dv_i)}{\norm{\Psi(v(t_0) + \Dv_i)}}$ for all $i \in \{1, \ldots, n\}$\;
        $L \leftarrow \sh(\Psi_i)$ for arbitrary $i \in \{1, \ldots, n \}$ \; 
    \If{any $\Psi_i = 0$ }{
        \KwRet{ERROR: vanishing $\Psi$}
    }
    \ElseIf{any $\Psi_i \not \simeq \Psi_j$}{
        \KwRet{ERROR: discontinuous}
    }
    \Else{
        \KwRet{$[\Psi(v(t_0))] = L$}
    }
    }
    \caption{Extended $C^0$-Continuation Algorithm}
    \label{algo:c0-extended-conti}
\end{algorithm}
\section{Numerics, Experimental Results \& Examples}
It is not easy to achieve an implementation of the hyperreal and hypercomplex numbers for real time application. One particular ansatz is to use ultrafilters to construct the field, which is highly non-constructive, since it depends on the axiom of choice (see \cite{goldblatt}). Although there are constructive methods (see \eg \cite{palmgren1995constructive,  palmgren2001unifying}), an implementation remains challenging.
One particular implementation was given by J.~Fleuriot (see for example \cite{fleuriot2012}) using higher order logic in Isabelle (\cite{nipkow2002isabelle}).

We decided to use a much smaller, but still non-Archimedean, subfield of the hyperreal and hypercomplex numbers: the so called \textit{Levi-Civita field}. The complex Levi-Civita field is also algebraically closed, which is essential for compass constructions. Especially it is the smallest non-Archimedean field which admits radical expression. For details we refer the reader to \cite{shamseddine2010analysis}.

Furthermore, we will discuss several examples like von-Staudt constructions and the disjoint circle intersection. We will give concrete numerical values to illustrate our method. And finally we will discuss the avoidance of singularities a priori. 
\subsection{The Levi-Civita Field}
The Levi-Civita field is build up by a single positive infinitesimal number, the so called number $d$.
\begin{definition}[The Number $d$]
  Pick an arbitrary but fixed positive real infinitesimal number and call it $\mathit{d}$, \ie $d \in \I_\R, d > 0$.
\end{definition}
\begin{remark}
 Note that $d^q$ is infinitesimal if and only if $q > 0$ and unlimited if and only if $q <  0$. 
\end{remark}
\begin{definition}[Left-Finite Set, \cite{shamseddine2010analysis}]
  A subset $M \subset \mathbb{Q}$ is called left-finite, if and only if for every number $r \in \mathbb{Q}$ there are only finitely many elements of M that are smaller than $r$.
\end{definition}
\begin{definition}[The Levi-Civita Field, \cite{shamseddine2010analysis}]
 \[
\RLC = \sum_{q\in M \subset \mathbb{Q}} a_q{d}^q \quad (a_q\; \in \R),\qquad
\CLC = \sum_{q\in M \subset \mathbb{Q}} a_q{d}^q \quad (a_q\; \in \C)
 \]
 where $M$ is left-finite. One can show that $(\RLC, +, \cdot)$, with the canonical polynomial operators, is a field and $\CLC$ is algebraically closed. 
\end{definition}
We will not discuss how to implement the Levi-Civita field, but the implementation is straight forward and very fast. For further details see \cite{shamseddine2010analysis}. For our sample implementation we used \textit{CindyJS} (see \cite{vonGagern2016}) and its scripting language \textit{CindyScript}, which was originally developed for \textit{Cinderella} (see \cite{richter2000user}).
\subsection{Von-Staudt Construction}
Our first example will be a von-Staudt construction. These constructions can be used to encode summation and multiplication employing geometry, and use auxiliary points on which restrictions are imposed (which are essentially removable singularities, as we will find out later). We will use the summation example to show how our developed methods can be used to resolve the singularities such that these restrictions can be omitted.
\begin{example}[Classical von-Staudt Contruction]
\label{implementation:von-staudt-orig}
We will follow \cite{richter2011perspectives} \page 89 ff.: for given points $\mathbf{x} = (x,0,1)^T$ and $\mathbf{y} = (y,0,1)^T$ we will construct the sum $\mathbf{x+y}$ according to the projective scale $\mathbf{0} = (0,0,1)^T$ and $\mathbf{\infty} = (1,0,0)^T$. The choice of the projective scale is arbitrary, but for the sake for simplicity, we use a classical setting in which $\mathbf{\infty}$ is a far point. Hence, the homogeneous coordinates of $x+y$ are equal to $\mathbf{x+y}  = (x+y,0,1)^T$, up to scalar multiplication.

   The constructions reads a follows: denote the connecting line of $\mathbf{0}$ and $\mathbf{\infty}$ by $l$. Then, choose an auxiliary point $E$, which is not incident to $l$. Choose a further auxiliary point $F$ on $\join(\mathbf{\infty}, E)$, which is unequal to $E$. Then we further proceed with the following operations:
   \begin{align*}
       G & = \meet(\join(\mathbf{0}, E), \join(\mathbf{y}, F))\\ 
       H & = \meet(\join(\mathbf{\infty}, G), \join(\mathbf{x}, E))\\ 
       m & = \join(F,H)\\
       \mathbf{x+y}  & = \meet(l, m)
   \end{align*}
   See \Cref{impl:vstaudt-nondegen} for a picture of the construction.
\begin{figure}[]
       \centering
   \begin{tikzpicture}
\draw (-0.3,0) -- (12,0);
\node at (12,0) [right= 1mm, below = 0.5mm] {${l}$};

\coordinate (z) at (0,0);
\fill (z) circle [radius=2pt];
\node at (z) [below = 2mm] {$\mathbf{0}$};

\coordinate (inf) at (11,0);
\fill (inf) circle [radius=2pt];
\node at (inf) [below = 2mm] {$\infty$};

\coordinate (x) at (2,0);
\fill (x) circle [radius=2pt];
\node at (x) [below = 3mm] {$\mathbf{x}$};

\coordinate (y) at (4,0);
\fill (y) circle [radius=2pt];
\node at (y) [below = 3mm] {$\mathbf{y}$};

\coordinate (xy) at (6,0);
\fill (xy) circle [radius=2pt];
\node at (xy) [below = 2.5mm] {$\mathbf{x+y}$};

\coordinate (E) at (2,2);
\fill (E) circle [radius=2pt];
\node at (E) [left = 3mm, above = 0.5mm] {${E}$};
\coordinate (F) at (4,2);
\fill (F) circle [radius=2pt];
\node at (F) [right = 3mm, above =1mm] {${F}$};

\coordinate (H) at (2,4);
\fill (H) circle [radius=2pt];
\node at (H) [above = 3.0mm] {${H}$};
\coordinate (G) at (4,4);
\fill (G) circle [radius=2pt];
\node at (G) [above = 3.0mm] {${G}$};

\draw [shorten <=-0.3cm, shorten >=-0.3cm] (z) -- (G);
\draw [shorten <=-0.3cm, shorten >=-0.3cm] (H) -- (xy);
\draw [shorten <=-0.3cm, shorten >=-0.3cm] (x) -- (H);
\draw [shorten <=-0.3cm, shorten >=-0.3cm] (G) -- (y);

\draw [shorten <=-0.3cm, shorten >=-3cm] (E) -- (F);
\draw (7,2) to[out=0,in=135] (inf);
\draw [start angle=0,shorten <=-0.3cm, shorten >=-3cm] (H) -- (G);
\draw (7,4) to[out=0,in=90] (inf);
\end{tikzpicture}
    \caption{non--degenerate von-Staudt}
    \label{impl:vstaudt-nondegen}
\end{figure}
\end{example}
As we can see the von-Staudt construction has two requirements:
\begin{enumerate}
    \item $E$ and $F$ have to be disjoint,
    \item the point $E$ must not be incident to $l$ (hence $F$ is also not incident to $l$)
\end{enumerate}
If one violates these constraints, the construction breaks down in standard geometry, because essential construction steps are undefined.

We will now violate both prerequisites of the construction in standard geometry. Then, we will use our method for resolving singularities by applying non-standard projective geometry and show how our method behaves numerically. We will always use the following coordinates for our construction: $\mathbf{0} = (0,0,1)^T, \mathbf{x} = (2,0,1)^T, \mathbf{y} = (4,0,1)^T, \mathbf{\infty} = (1,0,0)^T$. This should result in $\mathbf{x+y} = (6,0,1)^T \sim (1,0, \frac{1}{6})^T$.
\begin{example}[Von-Staudt Construction - non-disjoint auxiliary Points]
    Let break the first requirement: the auxiliary points $E$ and $F$ must not coincide. It is easy to see that if $E=F$, then also $H=F$ and $G=F$, just by definition. In particular, the line $m = \join(H,F)$ which defines $x+y = \meet(l,m)$ is not defined (and thus equal to $(0,0,0)^T \not \in \RP^2$). This results in $x+y$ not being defined (\ie being equal to $(0,0,0)^T \not \in \RP^2$).

    The operators $\join$ and $\meet$ are defined using the cross-product in $\RP^2$, which is a continuous function $\times: \R^3 \times \R^3 \rightarrow \R^3$. The whole construction is build up using cross product, so the construction steps are compositions of continuous functions and therefore continuous. So we can apply \Cref{nsa_prefac:shadow_cont_extended}: we infinitesimally perturb the free point $E$ and replacing the point by $E':= E + (\epsilon_x,\epsilon_y,0)^T$ and infinitesimally perturb $F$ to $F'$ with $F':= F + (\delta_x, \delta_y,0)$ such that $F' \neq E'$, but $F' \simeq E'$.
    
    The actual perturbation is arbitrary as long as it fulfills the requirements of the theorem. According to \Cref{algo:c0-extended-conti} we should actually test several perturbations to get an indicator if the function is actually continuous. Furthermore, note that we will use the appreciable cross-product of \Cref{nsa_pg:cross-prod} for all non--standard projective joins and meets.  A picture of the construction can be found in \Cref{impl:vS-merging-pts} and the computational values in \Cref{impl:vSt-merge-table}. Using the perturbation one finds 
    \[
    m' = {\color{red}d^1} \begin{pmatrix} -0.125\\ -0.125\\ 0.75 \end{pmatrix}
    \]
    especially $m'$ is infinitesimal (since $d^1$ is infinitesimal). If one would apply the shadow function it would yield $\sh (m') = (0,0,0)^T$, which resembles the degeneracy of the construction in standard geometry, but applying the projective shadow the result is
    \[
        \psh(m') = \begin{pmatrix} -0.1667\\ -0.1667\\ 1 \end{pmatrix}
    \]
        which is the correct line, hence results in the desired point: 
        \[
        \meet(l, \psh m') =  \begin{pmatrix} 1\\ 0\\ \frac{1}{6} \end{pmatrix} \sim \begin{pmatrix} 6\\ 0\\ 1 \end{pmatrix} = \mathbf{x+y}
        \]
    \begin{figure}[]
       \centering
\begin{tikzpicture}
\draw (-0.3,0) -- (12,0);
\node at (12,0) [right= 1mm, below = 0.5mm] {${l}$};

\coordinate (z) at (0,0);
\fill (z) circle [radius=2pt];
\node at (z) [below = 2mm] {$\mathbf{0}$};

\coordinate (inf) at (11,0);
\fill (inf) circle [radius=2pt];
\node at (inf) [below = 2mm] {$\infty$};

\coordinate (x) at (2,0);
\fill (x) circle [radius=2pt];
\node at (x) [below = 3mm] {$\mathbf{x}$};

\coordinate (y) at (4,0);
\fill (y) circle [radius=2pt];
\node at (y) [below = 3mm] {$\mathbf{y}$};

\coordinate (xy) at (6,0);
\fill [opacity=0.4] (xy) circle [radius=2pt];
\node at (xy) [opacity=0.4,below = 2.5mm] {$\mathbf{x+y}$};

\coordinate (F) at (4,2);
\fill (F) circle [radius=2pt];
\node at (F) [above =2mm] {${F(\simeq E \simeq H \simeq G)}$};

\draw [shorten <=-0.3cm, shorten >=-0.3cm] (z) -- (F);
\draw [dashed,shorten <=-0.3cm, shorten >=-0.3cm] (F) -- (xy);
\draw [shorten <=-0.3cm, shorten >=-0.3cm] (x) -- (F);
\draw [shorten <=-0.3cm, shorten >=-0.3cm] (F) -- (y);

\draw (F) -- (7,2);
\draw (7,2) to[out=0,in=135] (inf);
\end{tikzpicture}
    \caption{Degenerate von-Staudt: Merging Auxiliary Points}
    \label{impl:vS-merging-pts}
\end{figure}
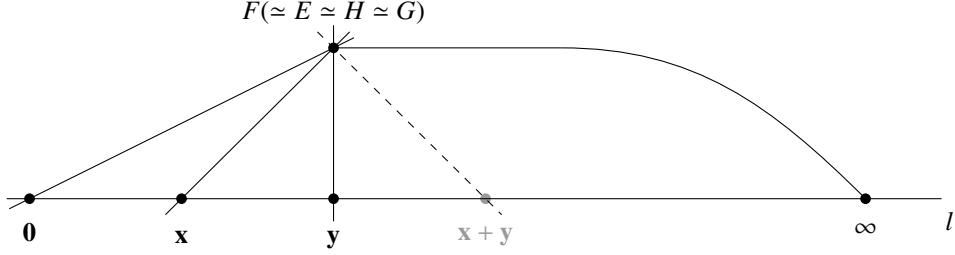
\end{example}

\begin{table}[]
\centering
\footnotesize
\begin{tabular}{cccccc}
    &$E/E'$       &$F/F'$       &$H/H'$       &$m/m'$       &$x+y, (x+y)'$  \\
CindyJS & $\begin{pmatrix}1\\ 0.5\\ 0.25 \end{pmatrix}$         & $\begin{pmatrix}1\\ 0.5\\ 0.25 \end{pmatrix}$      & $\begin{pmatrix}1\\ 0.5\\ 0.25 \end{pmatrix}$      & $\begin{pmatrix}0\\ 0\\ 0 \end{pmatrix}$       & $\begin{pmatrix}0\\ 0\\ 0 \end{pmatrix}$  \\
NSA     &  $\begin{pmatrix}1\cdot d^{0} + \epsilon_x\\ 0.5\cdot d^{0}+ \epsilon_y\\ 0.25\cdot d^{0}\end{pmatrix}$     &  $\begin{pmatrix}1\cdot d^{0} + \delta_x\\ 0.5\cdot d^{0}+ \delta_y\\ 0.25\cdot d^{0}\end{pmatrix}$     & $\begin{pmatrix}1\cdot d^{0} + \gamma_x\\ 0.5\cdot d^{0}+ \gamma_y\\ 0.25\cdot d^{0}\end{pmatrix}$      & $\begin{pmatrix}-0.125\cdot{\color{red} d^1}\\ -0.125\cdot {\color{red}d^{1}}\\ 0.75\cdot {\color{red}d^{1}} \end{pmatrix}$       &$\begin{pmatrix}-1\cdot d^{0}\\ 0\cdot d^{0}\\ -0.1667\cdot d^{0} \end{pmatrix}$  \\
$\psh$     & $\begin{pmatrix}1\\ 0.5\\ 0.25 \end{pmatrix}$      & $\begin{pmatrix}1\\ 0.5\\ 0.25 \end{pmatrix}$ & $\begin{pmatrix}1\\ 0.5\\ 0.25\end{pmatrix}$        &$\begin{pmatrix}-0.1667\\ -0.1667\\ 1 \end{pmatrix}$ & $\begin{pmatrix}1\\ 0\\ 0.1667 \end{pmatrix}$
\end{tabular}
\caption{\textbf{Merging auxiliary points}. Perturbation: $\epsilon_x = \epsilon_y = (1+d)$, $\delta_{x,y}$ and $\gamma_{x,y}$ calculated accordingly. Therefore $F' \neq H'$ but $F' \simeq H'$ (tails omitted for readability), $m' = \join(F',H')$ is properly defined and infinitesimal.}
\label{impl:vSt-merge-table}
\end{table}
\begin{remark}
    In the non-degenerate non-standard construction it was actually not important for $F$ to be incident (and not only almost incident) to $\join(\infty, E)$. By continuity of all operations and the non-degeneracy, we could actually perturb all points and generate the same result via projective shadows.

    In the degenerate case the situation is different: if $F'$ and $H'$ are only chosen to be in the projective halo of $E'$, we can generate arbitrary lines $m$ (which are almost incident to $E'$) but with arbitrarily chosen direction! 

We will shed some light on this using \Cref{nsa_prefac:shadow_cont_extended}. The perturbations are chosen to be element of $(\I^{k+1} \Smz )\cap V^*$, where $V$ denotes the subspace of a (semi-)free element, which models the restrictions a semi-free element is subjected. This means the perturbations must be chosen such that these restrictions of the dependent elements are not violated.
\end{remark}
Now we will degenerate the construction even more: we move the point $E$ onto the line $l = \join(\mathbf{0}, \mathbf{\infty})$.
\begin{example}[Von-Staudt construction -- $E$ incident to $l$]
    We break the second requirement: $E$ must not be incident to $l$. If we move $E$ to $\mathbf{x}$ then practically the whole construction breaks down: $G$, since $x = E$ and their join is undefined, $H$ because $G$ is undefined, $m$ since $H$ is undefined and of course so is $\mathbf{x+y}$. A picture of the situation can be found in \Cref{impl:vS-on-line}.

    Again we infinitesimally perturb the free point $E$ replacing the point by $E':= E + (\epsilon_x,\epsilon_y,\epsilon_z)^T$ and the dependent elements $F', G', H', m'$ and $(x+y)'$ accordingly. The values of the resulting vectors can be read up in \Cref{impl:vSt-on-line-point-table}.
\begin{table}[]
\centering
\footnotesize
\begin{tabular}{ccccccc}
    &$E/E'$       &$F/F'$ & $G/G'$       &$H/H'$       &       &$x+y, (x+y)'$  \\
CindyJS & $\begin{pmatrix}1\\ 0\\ 0.5 \end{pmatrix}$ & $\begin{pmatrix}1\\ 0\\ 0.25 \end{pmatrix}$ & $\begin{pmatrix}0\\ 0\\ 0 \end{pmatrix}$ & $\begin{pmatrix}0\\ 0\\ 0 \end{pmatrix}$ & &  $\begin{pmatrix}0\\ 0\\ 0 \end{pmatrix}$ \\
NSA & $\begin{pmatrix}1\cdot d^{0}\\ 1\cdot {\color{red} d^{1}}\\ 0.5\cdot d^{0} \end{pmatrix}$ & $\begin{pmatrix}1\cdot d^{0}\\ 0.5\cdot {\color{red} d^{1}}\\ 0.25\cdot d^{0} \end{pmatrix}$ & $\begin{pmatrix}1\cdot d^{0}\\ 1\cdot {\color{red} d^{1}}\\ 0.25\cdot d^{0} \end{pmatrix}$ & $\begin{pmatrix}-1\cdot d^{0}\\ -2\cdot {\color{red} d^{1}}\\ -0.5\cdot d^{0} \end{pmatrix}$ &  & $\begin{pmatrix}6\cdot {\color{red} d^{1}}\\ {\color{red} 0 \cdot d^{0}}\\ 1\cdot {\color{red} d^{1}} \end{pmatrix}$\\
$\psh$ & $\begin{pmatrix}1\\ 0\\ 0.5 \end{pmatrix}$ & $\begin{pmatrix}1\\ 0\\ 0.25 \end{pmatrix}$ & $\begin{pmatrix}1\\ 0\\ 0.25 \end{pmatrix}$ & $\begin{pmatrix}1\\ 0\\ 0.5 \end{pmatrix}$ &  & $\begin{pmatrix}1\\ 0\\ 0.1667 \end{pmatrix}$ \\
       &&&&&&\\
       & $\join(\infty, E/E')$ & $\join(\mathbf{0}, E/E')$ & $\join(y, F/F')$ & $\join(\infty, G/G')$ & $\join(\mathbf{x}, E/E')$ & $m/m' = \join(H/H', F/F')$ \\
CindyJS &  $\begin{pmatrix}0\\ 1\\ 0 \end{pmatrix}$ & $\begin{pmatrix}0\\ 1\\ 0 \end{pmatrix}$ & $\begin{pmatrix}0\\ 0\\ 0 \end{pmatrix}$ & $\begin{pmatrix}0\\ 0\\ 0 \end{pmatrix}$ & $\begin{pmatrix}0\\ 0\\ 0 \end{pmatrix}$ & $\begin{pmatrix}0\\ 0\\ 0 \end{pmatrix}$ \\
NSA & $\begin{pmatrix}{\color{red} 0 \cdot d^{0}}\\ -0.5\cdot d^{0}\\ 1\cdot {\color{red} d^{1}} \end{pmatrix}$ & $\begin{pmatrix}-1\cdot {\color{red} d^{1}}\\ 1\cdot d^{0}\\ {\color{red} 0 \cdot d^{0}} \end{pmatrix}$ & $\begin{pmatrix}-0.125\cdot {\color{red} d^{1}}\\ {\color{red} 0 \cdot d^{0}}\\ 0.5\cdot {\color{red} d^{1}} \end{pmatrix}$ & $\begin{pmatrix}{\color{red} 0 \cdot d^{0}}\\ -0.25\cdot d^{0}\\ 1\cdot {\color{red} d^{1}} \end{pmatrix}$ & $\begin{pmatrix}-0.5\cdot {\color{red} d^{1}}\\ 0.5\cdot {\color{red} d^{1}}\\ 1\cdot {\color{red} d^{1}} \end{pmatrix}$ & $\begin{pmatrix}0.25\cdot {\color{red} d^{1}}\\ 0.25\cdot d^{0}\\ -1.5\cdot {\color{red} d^{1}} \end{pmatrix}$ \\
$\psh$    & $\begin{pmatrix}0\\ 1\\ 0 \end{pmatrix}$ & $\begin{pmatrix}0\\ 1\\ 0 \end{pmatrix}$ & $\begin{pmatrix}-0.25\\ 0\\ 1 \end{pmatrix}$ & $\begin{pmatrix}0\\ 1\\ 0 \end{pmatrix}$ & $\begin{pmatrix}-0.5\\ 0.5\\ 1 \end{pmatrix}$ & $\begin{pmatrix}0\\ 1\\ 0 \end{pmatrix}$
\end{tabular}
\caption{\textbf{Auxiliary points on $\join(\mathbf{\mathbf{0}}, \mathbf{\infty})$}. Infinitesimal entries marked in {\color{red} red}. 
    Remember the construction: $G  = \meet(\join(\mathbf{0}, E), \join(\mathbf{y}, F)), \;
H  = \meet(\join(\mathbf{\infty}, G), \join(\mathbf{x}, E)), \;
m  = \join(F,H), \; \mathbf{x+y}   = \meet(l, m)$ with $l = (0,1,0)^T$. The $\epsilon$-tails are omitted for reasons of readability. 
}
\label{impl:vSt-on-line-point-table}
\end{table}
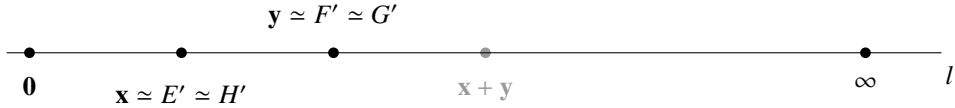
\begin{figure}[]
   \begin{tikzpicture}
\draw (-0.3,0) -- (12,0);
\node at (12,0) [right= 1mm, below = 0.5mm] {${l}$};

\coordinate (z) at (0,0);
\fill (z) circle [radius=2pt];
\node at (z) [below = 2mm] {$\mathbf{0}$};

\coordinate (inf) at (11,0);
\fill (inf) circle [radius=2pt];
\node at (inf) [below = 2mm] {$\infty$};

\coordinate (x) at (2,0);
\fill (x) circle [radius=2pt];
\node at (x) [below = 3mm] {$\mathbf{x}\simeq E' \simeq H'$};

\coordinate (y) at (4,0);
\fill (y) circle [radius=2pt];
\node at (y) [above = 2mm] {$\mathbf{y} \simeq F' \simeq G'$};

\coordinate (xy) at (6,0);
\fill [opacity=0.4] (xy) circle [radius=2pt];
\node at (xy) [opacity=0.4,below = 2.5mm] {$\mathbf{x+y}$};

\end{tikzpicture}     
\caption{Degenerate von-Staudt: auxiliary points on $l = \join(\mathbf{0}, \mathbf{\infty})$. Here $x = E = H$ and $y = F = G$.
}
    \label{impl:vS-on-line}
\end{figure}
Again, we can see that classical projective geometry over the real numbers yields a lot of undefined objects (as expected). Using non--standard analysis in projective geometry in the form of a Levi-Civita field can still provide us with the correct results. 

Also, \Cref{impl:vSt-on-line-point-table} shows some interesting effects: firstly, the application of the projective shadow when a vector is appreciable. For example the point $E' = (1 \cdot d^0, 1 \cdot d^1,0.5 \cdot d^0)^T$ (where we neglected the $\epsilon$--tail). Then the $y$--component of $(E')_y=d^1$ is discarded by the $\psh$ function, since it is one magnitude smaller than the $x$ and $z$ component. Therefore, the projective shadow and the unperturbed point $E$ coincide as desired.

But even more interesting is the line $\join(\mathbf{x}, E')= (-0.5 \cdot d^1,  0.5 \cdot d^1,  1 \cdot d^1)^T$. The vector is infinitesimal, since all entries are infinitesimal. As expected, standard projective geometry, as used in CindyJS, will yield the zero vector. If we divide the line representative by $d^1$, which is in the magnitude of the whole vector, one can find the correct result with removed singularity: $\psh(\join(\mathbf{x}, E'))= (-0.5,  0.5,  1)^T$.

The bottom line of the example is that we can resolve singularities in geometric constructions, even if they are highly degenerate, using methods of non--standard analysis. Again, we emphasize that our method does \textbf{not} introduce numerical error due to its symbolic character, while retaining a real time execution speed.
\end{example}
\subsection{Disjoint Circle Intersection}
\begin{example}[Disjoint Circle Intersection]
The ``disjoint circle intersection'' problem was introduced in \Cref{sing:expl_sqrt}. Essentially we are considering the $\join$ of two points of intersection of two disjoint circles in a tangential situation, which turns out to be removable singularity. We already analyzed the problem using non--standard arithmetic in \Cref{nsa_prefac:circ_inter_revis} from a theoretical point of view. Now we will also give computational values as well. As a quick reminder consider \Cref{impl:circ_intersect_fig} (middle).

    If one examines the connecting line of those two merging points the operation will be inherently undefined. But if one perturbs the construction infinitesimally the operation is well defined again. 

    Using non--standard analysis we can perturb the construction at any reasonable point. We chose to do so in the algorithm which intersects a line and a conic. The line $l = (\lambda, \tau, \mu)^T$ is defined as described in \cite{richter2011perspectives} \page 195 ff..
  \Cref{algo:c0-extended-conti} actually tells us to perturb the (semi-) free elements of a construction but for the sake of simplicity we assume $l$ to be free although it is dependent. This is without loss of generality since we could always choose the circle centers to generate a line with arbitrary coordinates.
    We perturb $l$ to $l'$ with $l' = (\lambda -d , \tau -d , \mu -d )^T$, where $d$ is the infinitesimal number of the Levi-Civita field. 

    The leading coefficients of $p_1$ and $p_2$ read as follows:
    \[
    p_1 = \begin{pmatrix} 1\cdot d^0 \\ 2.4495\cdot d^\frac{1}{2} \\ 1\cdot d^0 \end{pmatrix}, \quad \quad \quad p_2 = \begin{pmatrix} 1\cdot d^0 \\ - 2.4495\cdot d^\frac{1}{2}  \\ 1\cdot d^0 \end{pmatrix}
    \]
    As one can see the $y$--components of $p_1$ and $p_2$ are infinitesimal: they are the result of the square root operation in the Levi-Civita field and so they are in the magnitude of $\sqrt d$. If one takes the shadow of every component (or here equivalently the projective shadow since the vector is appreciable) one finds the double point of intersection $\psh (p_1) = \psh (p_2) = (1,0,1)^T$. Then of course the connecting line $\join (\psh (p_1), \psh (p_2))$ is the zero vector. This is essentially what standard projective geometry software would do.

    Now consider the non-standard connecting line of $p_1$ and $p_2$:
    \[
    \join(p_1, p_2) = \begin{pmatrix} 4.899\cdot d^{\frac{1}{2}} \\ 0\cdot d^{0}\\ -4.899\cdot d^{\frac{1}{2}} \end{pmatrix}
    \]
    As one can see the vector is infinitesimal, since all entries are infinitesimal. Taking the shadow of all components would therefore lead to the same result as before: $\sh(\join(p_1, p_2)) = (0,0,0)^T$. Taking the projective shadow, and normalize the vector to an appreciable length in advance one finds the desired result:
    \[
    \psh(\join(p_1, p_2)) = \sh \left( \frac{1}{ 4.899 \cdot d^\frac{1}{2}} \begin{pmatrix} 4.899\cdot d^{\frac{1}{2}} \\ 0\cdot d^{0}\\ -4.899\cdot d^{\frac{1}{2}} \end{pmatrix} \right) = 
 \begin{pmatrix} 1 \\ 0 \\ -1 \end{pmatrix} 
    \]
    The untruncated values can be found in \Cref{impl:circ_intersect_fig_table}.

    Just another word on performance: the operations over the Levi-Civita field can be performed in real time, which is essential for a dynamic geometry system. 
    \begin{table}[]
        \tiny
        \begin{tabular}{cccc}
            & $p_1$ & $p_2$ & $\join(p_1,p_2)$\\ 
        CindyJS & $\begin{pmatrix} 1\\ 0\\ 1 \end{pmatrix}$ & $\begin{pmatrix} 1\\ 0\\ 1 \end{pmatrix}$ & $\begin{pmatrix} 0\\ 0\\ 0 \end{pmatrix}$\\
        NSA & $\begin{pmatrix} 1\cdot d^{0} \cdot (1\cdot d^{0}-0.1875\cdot d^{1}+4.7076\cdot d^{1.5}+2.8125\cdot d^{2}) \\ 2.4495\cdot {\color{red} d^{0.5}} \cdot (1\cdot d^{0}+0.6124\cdot d^{0.5}-3\cdot d^{1}-1.8371\cdot d^{1.5}) \\ 1\cdot d^{0} \cdot (1\cdot d^{0}-3.1875\cdot d^{1}+1.0334\cdot d^{1.5}+1.6875\cdot d^{2}) \end{pmatrix}$ & $\begin{pmatrix} 1\cdot d^{0} \cdot (1\cdot d^{0}-0.1875\cdot d^{1}-4.7076\cdot d^{1.5}+2.8125\cdot d^{2}) \\  -2.4495\cdot {\color{red} d^{0.5}} \cdot (1\cdot d^{0}-0.6124\cdot d^{0.5}-3\cdot d^{1}+1.8371\cdot d^{1.5}) \\ 1\cdot d^{0} \cdot (1\cdot d^{0}-3.1875\cdot d^{1}-1.0334\cdot d^{1.5}+1.6875\cdot d^{2}) \end{pmatrix}$ & $\begin{pmatrix} 4.899 {\color{red} \cdot d^{0.5}} \cdot (1\cdot d^{0}) \\ {\color{red} 0\cdot d^{0}} \cdot (1\cdot d^{0}) \\ -4.899 {\color{red} \cdot d^{0.5}} \cdot (1\cdot d^{0}) \end{pmatrix}$ \\
            $\psh$ & $\begin{pmatrix} 1\\ 0\\ 1 \end{pmatrix}$ & $\begin{pmatrix} 1\\ 0\\ 1 \end{pmatrix}$ & $\begin{pmatrix}1 \\ 0 \\ -1 \end{pmatrix}$
       \end{tabular} 
       \caption{Computational values for tangential circles: points of intersection and their connecting line. For two circles $C$ and $D$, both with radius $1$, centered at the origin and at $(2,0,1)^T$ respectively. A picture of the situation can be found in \Cref{impl:circ_intersect_fig} (middle). Their points of intersection $p_1$ and $p_2$ coincide. Without methods of non--standard analysis the connecting line is undefined. The perturbed situation on the other hand yields an infinitesimal vector whose projective shadow $\psh$ is a well defined line. We marked infinitesimal entries in the leading coefficients in {\color{red} red}.}
       \label{impl:circ_intersect_fig_table}
    \end{table}
\begin{figure}
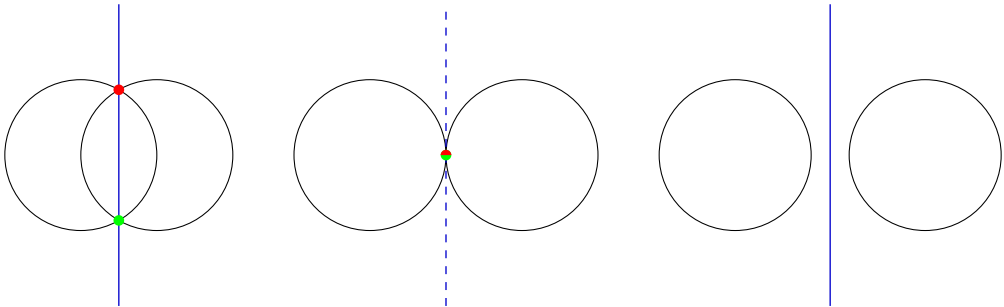

    \centering
    \drawMcirc
    \caption{$\join$ of two intersection points of circles. Left: the two real intersection points of two circles in red and green and their connecting line. Right: the so called ``powerline'': a real line connecting the (now complex) intersection points red and green. Middle: the connecting line of red and green is undefined (since they are equal) and an example of a removable singularity. Numerical values can be found in \Cref{impl:circ_intersect_fig_table}.}
    \label{impl:circ_intersect_fig}
\end{figure} 
\end{example}
\begin{example}[The Unification of Circles]
    We already encountered the intersection of identical circles in \Cref{nsa_pg:expl:univ_circles} on \page \pageref{nsa_pg:expl:univ_circles}. We will again consider the (Euclidean) points of intersection of two circles that have the same radius and (almost) the same center and consider their connecting line. When the centers coincide then the situation is singular.
    
    We employ \Cref{algo:c0-extended-conti} to check if a $C^0$-continuation is extended continuous or not. If for example the centers of the circles are bound to a common line, then the solution will be stable under infinitesimal perturbation. The unique $C^0$-continuation will yield a line that is orthogonal to the line that binds the centers and passes through the center.

    On the other hand, if the centers of the circles are free, then random infinitesimal perturbations will lead to random intersections of the circles. The connecting lines of these points are thus random lines which even could not intersect the circle at all (if we consider complex solutions). \Cref{algo:c0-extended-conti} verifies whether the results of the perturbations are infinitely close, for example by comparing the angles between the resulting vectors. The probability of a false positive result is negligible. 
    
    We will not plot any numerical values since they are essentially random noise, nevertheless see \Cref{impl:circles_unstable} for an illustration.
\begin{figure}
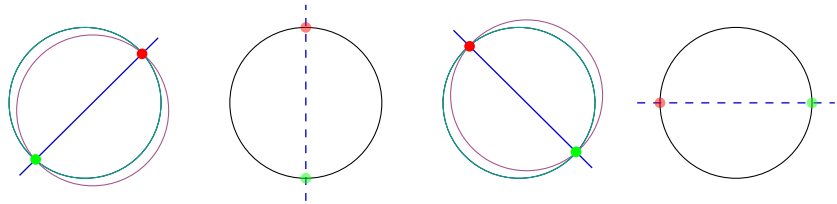

    \centering
    \unstablecircles
    \caption{\textbf{The Unification of Circles (free)}: the intersection of two identical circles yields unstable results under infinitesimal perturbation. \Cref{algo:c0-extended-conti} quickly detects that the singularity is not removable.}
\label{impl:circles_unstable}
\end{figure}
\end{example}

\section{A priori Avoidance of Singularities}
In the previous sections we discussed how to algorithmically remove singularities in geometric constructions. Although the presented methods are real time capable they still involved quite some machinery and advanced techniques and so it would be beneficial, if we did not have to employ the proposed resolutions in the first place. In the following chapter we will give a blueprint to analyze geometric algorithms and optimize their algebraic properties to remove avoidable prefactors. These prefactors are superfluous in a projective setting since they generate just another representative in the same equivalence class of an projective object. 
\label{sec:avoid_sing}
\subsection{Modeling the Avoidance of Singularities}
In the spirit of \Cref{sing_basics:ex_path}, where we split a function $\Psi: [0,1] \rightarrow C^d$ into a vanishing and a non-vanishing part in order to resolve a singularity, we will generalize this ansatz to a more abstract level. 
\begin{definition}[Removable Prefactor]
    Let $\V \subset \CP^{d_1} \times \ldots \times \CP^{d_n}$ model $n$ homogenious input elements. And let $f: \V \rightarrow  \C^{d+1}, f \not \equiv 0$ be a continuous function, which models an output element. If we can write $f$ such that there is a non-vanishing continuous function $\tilde f: \V \rightarrow \C^{d+1} \Smz$ and a function $C: \V \rightarrow \C$ with
    \[
        f(I) = C(I) \cdot \tilde f(I) \quad \forall I \in \V
    \]
    then we call $C$ a \textit{removable prefactor}.
\end{definition}
Analogously to the property of being extended $C^0$-continuous of \Cref{nsa_pg:extd_c0_cont}, we define the property of being universal $C^0$-continuous. The major difference between these concepts is that the first is only defined for one input element while the second is defined for multiple input elements.
\begin{definition}[Universal $C^0$-Continous]
    Let $\V \subset \CP^{d_1} \times \ldots \times \CP^{d_n}$ and let $f: \V \rightarrow  \CP^{d}$ be a continuous function. If there is $\hat \V \supsetneq \V$ and a continuous $\hat f: \hat \V \rightarrow \CP^d$ such that $f(I) = \hat f(I) \forall I \in \V$ then we call $f$ \textit{universal $\mathit{C^0}$-continuable} and $\hat f$ the \textit{universal $\mathit{C^0}$-continuation} on $\hat \V$.
\end{definition}
The next theorem will provide a way to find universal $C^0$-continuations if we can split a function into a non-vanishing part and a prefactor.
\begin{theorem}[Avoiding Singularities]
    Let $\V \subset \CP^{d_1} \times \ldots \times \CP^{d_n}$ and let $f: \V \rightarrow \C^{d+1}$ have a removable prefactor $C: \V \rightarrow \C$. Call $\V':=\{ v \in \V \bbar f(v) \neq 0\}$. Then $[f]: \V' \rightarrow \CP^d$ has an universal $C^0$-continuation $[\hat f]$ on $\V' \cup C^{-1}(0)$.
\end{theorem}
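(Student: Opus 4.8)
The plan is to reuse the decomposition provided by the removable prefactor and then push the whole picture down to projective space, in the same spirit as the proof of \Cref{sing_basics:frac_conti_lemma}. By hypothesis there are a continuous non-vanishing function $\tilde f: \V \rightarrow \C^{d+1}\Smz$ and a function $C: \V \rightarrow \C$ with $f(v) = C(v) \cdot \tilde f(v)$ for all $v \in \V$. I would define the candidate continuation $[\hat f]$ to be the restriction of $[\tilde f] := \pi \circ \tilde f$ to the set $\V' \cup C^{-1}(0)$, where $\pi : \C^{d+1}\Smz \rightarrow \CP^d$ is the canonical quotient map.

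First I would establish continuity. Since $\tilde f$ is continuous as a map into $\C^{d+1}\Smz$, \Cref{sing_basics:conti_inout}(2) (applied with $\Phi = \tilde f$) shows that $[\tilde f] = \pi \circ \tilde f : \V \rightarrow \CP^d$ is continuous; restricting this map to the subspace $\V' \cup C^{-1}(0) \subset \V$ keeps it continuous.

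Second, I would check that $[\hat f]$ agrees with $[f]$ on $\V'$. For $v \in \V'$ we have $f(v) \neq 0$, and since $\tilde f(v) \neq 0$ for every $v$, the identity $f(v) = C(v)\tilde f(v)$ forces $C(v) \neq 0$. Thus $C(v)$ is a non-zero scalar, so $[f(v)] = [C(v)\tilde f(v)] = [\tilde f(v)] = [\hat f(v)]$, i.e. $[\hat f]\big|_{\V'} = [f]$.

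Third, I would confirm that the domain has genuinely grown, which is the only point requiring a little care. For $v \in C^{-1}(0)$ the identity gives $f(v) = 0 \cdot \tilde f(v) = 0$, so $v \notin \V'$; hence $C^{-1}(0) \cap \V' = \emptyset$ and $\V' \cup C^{-1}(0) \supsetneq \V'$ as soon as $C^{-1}(0) \neq \emptyset$ (if $C$ never vanishes then $f$ never vanishes, $\V' = \V$, and the statement is vacuous with $[\hat f] = [f]$). Combining the three observations, $[\hat f]$ is a continuous extension of $[f]$ from $\V'$ to $\V' \cup C^{-1}(0)$, which is exactly a universal $C^0$-continuation. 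I do not expect any deep obstacle here: continuity is immediate from \Cref{sing_basics:conti_inout}, and the real work is bookkeeping --- verifying that $\tilde f$ remains defined and non-vanishing on the enlarged domain and that each clause of the definition of universal $C^0$-continuation is met.
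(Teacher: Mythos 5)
Your proposal is correct and follows essentially the same route as the paper: decompose $f = C \cdot \tilde f$, take $\hat f = \tilde f$ restricted to $\V' \cup C^{-1}(0)$, observe that $C(v) \neq 0$ on $\V'$ so the equivalence classes agree there, and conclude continuity of $[\tilde f]$ from the continuity and non-vanishing of $\tilde f$. Your extra bookkeeping (checking $C^{-1}(0) \cap \V' = \emptyset$ so the domain genuinely grows) is a small refinement the paper leaves implicit, not a different argument.
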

\begin{proof}
Since $f$ has a removable prefactor we can write 
    \[
        f(I) = C(I) \cdot \tilde f(I) \quad \forall I \in \V
    \]
    Now define: $\hat f: \V' \cup C^{-1}(0) \rightarrow \C^{d+1}, I \mapsto \tilde f(I)$
    then it holds true that 
    \[
        [f(I)] = [\hat f(I)] \forall I \in \V'.
    \]
    Furthermore, since $\tilde f$ is continuous and non-vanishing we have a universal continuation of $[f]$ on $\V' \cup C^{-1}(0)$ given by $[\tilde f]$. 
\end{proof}
\subsection{From Plücker's $\mathbf{\mu}$ to Efficient Implementation}
Plücker's $\mathbf{\mu}$ is very nice trick to construct geometric objects that fulfill multiple constraints. An example would be a line $l$ that passes through the intersection of two other lines $l_1$ and $l_2$ and additionally through a third point $q$. For further reading we refer the reader to \cite{richter2011perspectives} \page 96ff.
\begin{theorem}[Midpoint via Plücker's $\mu$]
    \label{avoid:thm:midpoint_pluecker}
    The Euclidean midpoint of two point $X$ and $Y \in \RP^2$ is given by:
   \[
        M = [Y,P_\infty]_L X + [X, P_\infty]_L Y
   \]
   where $P_\infty = \meet(\join(X , Y), l_\infty)$ and $L$ is chosen such that it is linearly independent of $X,P_\infty$ and $Y,P_\infty$.
\end{theorem}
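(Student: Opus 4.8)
The plan is to reduce the statement to a one-line computation with the $3\times 3$ bracket (scalar triple product) $[A,B]_L := \det(A,B,L)$. First I would check that the projective point $M = [Y,P_\infty]_L\, X + [X,P_\infty]_L\, Y$ is independent of the homogeneous representatives chosen for $X$, $Y$, $P_\infty$ and $L$: rescaling any one of these four vectors by a nonzero scalar rescales every term of the sum by the same scalar, hence rescales $M$, leaving its class in $\RP^2$ untouched. Consequently I may assume $X=(x_1,x_2,1)^T$ and $Y=(y_1,y_2,1)^T$ are normalized. This uses implicitly that $X$ and $Y$ are genuine Euclidean points and that $X\neq Y$; both are forced by the hypotheses, since if $X$ (say) lay on $l_\infty$ it would coincide with $P_\infty$ and no admissible $L$ would exist, and if $X=Y$ then $\join(X,Y)$ — hence $P_\infty$ and $M$ — would be undefined.

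Next I would compute $P_\infty$ explicitly, using that $\join$ and $\meet$ are the usual cross products in $\RP^2$ and that $l_\infty=(0,0,1)^T$. From $\join(X,Y)=X\times Y=(x_2-y_2,\,y_1-x_1,\,x_1y_2-x_2y_1)^T$ one gets
\[
    P_\infty \;=\; (X\times Y)\times l_\infty \;=\; (y_1-x_1,\; y_2-x_2,\; 0)^T \;=\; Y-X ,
\]
the last equality being valid for the normalized representatives fixed above.

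The heart of the proof is then immediate. By multilinearity and antisymmetry of the determinant, and since $X-Y=-P_\infty$ with these representatives,
\[
    [X,P_\infty]_L - [Y,P_\infty]_L \;=\; [X-Y,\,P_\infty]_L \;=\; -[P_\infty,\,P_\infty]_L \;=\; 0 ,
\]
so the two coefficients agree; write $\mu := [X,P_\infty]_L = [Y,P_\infty]_L$. Because $X$ (a finite point) is not proportional to $P_\infty$ (a point at infinity), $X$ and $P_\infty$ are linearly independent, and by assumption $L$ is independent of them; hence $\mu=\det(X,P_\infty,L)\neq 0$. Substituting,
\[
    M \;=\; \mu X + \mu Y \;=\; \mu\,(x_1+y_1,\; x_2+y_2,\; 2)^T ,
\]
whose projective class is that of $\big(\tfrac{x_1+y_1}{2},\,\tfrac{x_2+y_2}{2},\,1\big)^T$, i.e. exactly the Euclidean midpoint of $X$ and $Y$. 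A by-product is that $M$ does not depend on the admissible choice of $L$: changing $L$ only alters the nonzero scalar $\mu$.

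I do not anticipate a real obstacle. The only points needing care are the bookkeeping that legitimizes the normalization, verifying $\mu\neq 0$, and the sign in $M=[Y,P_\infty]_L X + [X,P_\infty]_L Y$: with the opposite sign the same calculation yields $\mu(X-Y)\sim P_\infty$, the point at infinity of the line $XY$ rather than its midpoint, so the $+$ sign is essential.
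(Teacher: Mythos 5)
Your proof is correct, but it takes a genuinely different route from the paper. The paper never normalizes coordinates: it invokes the projective characterization of the midpoint as the harmonic conjugate of $P_\infty$ with respect to $X,Y$, i.e.\ $(X,Y;M,P_\infty)_L=-1$ (citing Richter-Gebert), writes $M=\lambda X+\mu Y$ in the spirit of Plücker's $\mu$, expands the cross-ratio into brackets, and solves $[X,Y]_L\bigl(\mu[Y,P_\infty]_L-\lambda[X,P_\infty]_L\bigr)=0$ for $\lambda=[Y,P_\infty]_L$, $\mu=[X,P_\infty]_L$. You instead verify the formula directly: after checking representative-independence you normalize $X,Y$ to last coordinate $1$, compute $P_\infty=(X\times Y)\times l_\infty=Y-X$, use multilinearity to show the two bracket coefficients coincide and are nonzero (here the hypothesis on $L$ enters), and conclude $M\sim X+Y$, the Euclidean midpoint. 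What the paper's argument buys is coordinate-freeness and an explicit link to the harmonic-position/cross-ratio machinery that motivates Plücker's $\mu$ in the first place; what your argument buys is self-containedness (no appeal to the harmonic characterization of the midpoint), an explicit verification that the coefficient $[X,P_\infty]_L=[Y,P_\infty]_L$ is nonzero so that $M$ is a legitimate point of $\RP^2$, and the observation that admissible changes of $L$ only rescale $M$ — points the paper leaves implicit. Your remark on the necessity of the $+$ sign (the $-$ sign would return $P_\infty$ itself) is a correct and useful sanity check not present in the paper.
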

\begin{remark}
    For objects $X,Y,Z$ in $\CP^2$ we write $[X,Y]_Z = [X,Y,Z] = \det(X,Y,Z)$.
\end{remark}
 \begin{proof}
     According to \cite{richter2011perspectives} \page~81 $X$, $Y$, the midpoint $M$ and $P_\infty$ are in harmonic position (seen from an appropriate point $L$):
     \[
         (X,Y;M, P_\infty)_L = -1
     \]
     Furthermore we know that we can construct the midpoint via Plücker's $\mu$ as a linear combination of $X$ and $Y$. This means there are $\lambda \in \C$ and $\mu \in \C$ such that $M = \lambda X + \mu Y$. In the following, we will derive those parameters using the cross-ratio property:
     \begin{align*}
         (X,Y;M, P_\infty)_L &= -1 \\
         \Leftrightarrow \frac{[X,M]_L [ Y, P_\infty]_L }{[X,P_\infty]_L [Y,M]_L} &= -1\\
         \Leftrightarrow {[X,M]_L [ Y, P_\infty]_L } &= - {[X,P_\infty]_L [Y,M]_L}\\
         \Leftrightarrow \mu [X,Y]_L [Y,P_\infty]_L &= -\lambda [X, P_\infty]_L [Y,X]_L \\
         \Leftrightarrow [X,Y]_L ( \mu [Y,P_\infty]_L &- \lambda [X, P_\infty]_L ) = 0
     \end{align*}
     An appropriate choice for $ \mu [Y,P_\infty]_L - \lambda [X, P_\infty]_L $ to be zero is $\lambda =  [Y, P_\infty]_L$ and $\mu = [X, P_\infty]_L$. And therefore we can write $M$ as  \[
         M = [Y, P_\infty]_L X + [X, P_\infty]_L Y
     \]
      which is the claim.
 \end{proof}
\begin{remark}
    An appropriate choice for $L$ in the previous theorem is $L = X \times Y$. This is fairly easy to see as we are looking for a point $L$ which is not incident to the line $l= \join(X,Y)$ since $X,Y$ and $P_\infty$ are all incident to $l$. Hence $L$, which is the line $l$ interpreted as point, is an appropriate choice since $\langle L, l \rangle = \langle L, L \rangle = \norm{L}^2 \neq 0$: therefore $L$ (interpreted as point) is not incident to $l$.
\end{remark}
\begin{lemma}[Algebra of Midpoint via Plücker's $\mu$]
    \label{avoid:lem:algebra_mid_mu}
    The algebraic representation of the midpoint $M = [Y, P_\infty, L] X + [X, P_\infty, L] Y$ of \Cref{avoid:thm:midpoint_pluecker} of $X = (x_1, x_2, x_3)$ and $Y = (y_1, y_2,y_3)$ with $L = X \times Y$ a removable prefactor  
    \[
       C := x_{2}^{2} y_{1}^{2} + x_{3}^{2} y_{1}^{2} - 2 x_{1} x_{2} y_{1} y_{2} +
x_{1}^{2} y_{2}^{2} + x_{3}^{2} y_{2}^{2} - 2 x_{1} x_{3} y_{1} y_{3} -
2 x_{2} x_{3} y_{2} y_{3} + x_{1}^{2} y_{3}^{2} + x_{2}^{2} y_{3}^{2}
    \]
\end{lemma}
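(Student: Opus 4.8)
The plan is to compute $M$ in closed form and read off the factorization $M = C\cdot\tilde f$ directly, where $\tilde f$ is manifestly non-vanishing away from the diagonal $X\sim Y$. The point is that both Plücker coefficients $[Y,P_\infty,L]$ and $[X,P_\infty,L]$ collapse to a single monomial times $\norm{X\times Y}^2$, because $L = X\times Y$ is orthogonal to both $X$ and $Y$, and this common factor is exactly the claimed $C$.

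First I would unwind the definitions. Writing $l_\infty = (0,0,1)^T =: e_3$, the line $\join(X,Y)$ is represented by $L = X\times Y$, hence $P_\infty = \meet(\join(X,Y), l_\infty) = L\times e_3 = (L_2,\, -L_1,\, 0)^T$. Then I would rewrite the two scalar coefficients using $\det(A,B,C) = \langle A,\, B\times C\rangle$, so that $[Y,P_\infty,L] = \langle Y,\, P_\infty\times L\rangle$ and $[X,P_\infty,L] = \langle X,\, P_\infty\times L\rangle$, and compute the common vector $P_\infty\times L = (L\times e_3)\times L$ via the triple-product identity $(a\times b)\times c = b\langle a,c\rangle - a\langle b,c\rangle$, giving $P_\infty\times L = \norm{L}^2 e_3 - L_3\, L$.

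Next I would substitute and use $\langle X,\, L\rangle = \langle X,\, X\times Y\rangle = \det(X,X,Y) = 0$ and likewise $\langle Y,\, L\rangle = 0$, which kills the $L_3 L$ term in both pairings and leaves $[Y,P_\infty,L] = \norm{L}^2\, y_3$ and $[X,P_\infty,L] = \norm{L}^2\, x_3$. Plugging into the Plücker formula of \Cref{avoid:thm:midpoint_pluecker} gives $M = \norm{L}^2\, y_3\, X + \norm{L}^2\, x_3\, Y = \norm{X\times Y}^2\,(y_3 X + x_3 Y)$, so one takes $C := \norm{X\times Y}^2$ and $\tilde f := y_3 X + x_3 Y$ (which, divided by $2x_3 y_3$, is indeed the Euclidean midpoint, a useful sanity check). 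It then remains to expand $\norm{X\times Y}^2 = (x_2 y_3 - x_3 y_2)^2 + (x_3 y_1 - x_1 y_3)^2 + (x_1 y_2 - x_2 y_1)^2$ and collect terms, which reproduces the nine-term polynomial $C$ in the statement verbatim, and to observe that $\tilde f = y_3 X + x_3 Y$ is continuous and vanishes only when $X \sim Y$, so on the natural domain $\V$ of distinct input points it is the required nowhere-zero factor and $C$ is a removable prefactor in the sense of the preceding definition.

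I do not expect a real obstacle here: the only delicate part is bookkeeping — keeping the sign of $P_\infty = L\times e_3$ and the orientation convention in $\det(A,B,C) = \langle A,\, B\times C\rangle$ consistent with the ordering used in $[Y,P_\infty,L]$ versus $[X,P_\infty,L]$, and then checking the term-by-term agreement between the expanded $\norm{X\times Y}^2$ and the polynomial displayed in the lemma. Expanding the two $3\times 3$ determinants by brute force would also work, but it is error-prone; the vector-identity route above is what makes the cancellation of the $\langle X,L\rangle$ and $\langle Y,L\rangle$ terms transparent and the factorization obvious.
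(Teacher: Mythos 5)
Your core computation is correct, and it takes a genuinely different route from the paper: the paper's proof is literally ``simple calculation using a computer algebra system'', i.e.\ a brute-force expansion, whereas you derive the factorization structurally. Your chain
\[
[Y,P_\infty,L] = \langle Y,\,P_\infty\times L\rangle,\qquad
P_\infty\times L=(L\times e_3)\times L=\norm{L}^2 e_3-L_3L,\qquad
\langle X,L\rangle=\langle Y,L\rangle=0
\]
immediately gives $[Y,P_\infty,L]=\norm{L}^2y_3$, $[X,P_\infty,L]=\norm{L}^2x_3$, hence $M=\norm{X\times Y}^2\,(y_3X+x_3Y)$, and expanding $\norm{X\times Y}^2$ reproduces the nine-term polynomial $C$ exactly. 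This is more informative than the paper's proof: it explains \emph{why} the prefactor is $\norm{X\times Y}^2$ and simultaneously establishes the identity $M_\mu = C\cdot(y_3X+x_3Y)$ that the paper only asserts later in the proof of the efficient-midpoint theorem. The sign bookkeeping you worry about is harmless: with the convention $\meet(l,m)=l\times m$ (so $P_\infty=L\times l_\infty$) you get $+C$, matching the paper; the opposite convention only flips the sign of both coefficients, which can be absorbed into $\tilde f$ and does not affect the prefactor claim.

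One side remark in your last step is wrong, though it does not affect the factorization itself: $\tilde f=y_3X+x_3Y$ does \emph{not} ``vanish only when $X\sim Y$''. Looking at the third coordinate, $y_3X+x_3Y=0$ forces $2x_3y_3=0$, and then nonvanishing of the representatives forces $x_3=y_3=0$; conversely $x_3=y_3=0$ makes $\tilde f$ vanish even for distinct far points. So $\tilde f$ vanishes exactly when both $X$ and $Y$ lie on $l_\infty$, and in particular it does \emph{not} vanish for coincident finite points: for $Y=\lambda X$ with $x_3\neq 0$ one gets $\tilde f=2\lambda x_3X\neq 0$, which is precisely why $y_3X+x_3Y$ serves as the desingularized midpoint at $X=Y$ in the subsequent lemma. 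Accordingly, the domain on which $\tilde f$ is the required nowhere-zero factor is not ``pairs of distinct points'' but pairs of points not both at infinity (e.g.\ pairs of Euclidean points), which is the natural $\V$ for a Euclidean midpoint anyway.
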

\begin{proof}
    Simple calculation using a computer algebra system.
\end{proof}
\begin{theorem}[Efficient Midpoint, \cite{richter2011perspectives} \page~433]
    \label{avoid:thm:eff_mid}
    For two points $X = (x_1, x_2, x_3)^T,Y=(y_1, y_2, y_3)^T \in \RP^2$ their Euclidean midpoint $M$ is given by:
    \[
        M = y_3 X + x_3 Y.
    \]
\end{theorem}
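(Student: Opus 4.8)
The plan is to unwind the Plücker‑$\mu$ formula of \Cref{avoid:thm:midpoint_pluecker} with the canonical choices $l_\infty = (0,0,1)^T$ and $L = X \times Y$, and to simplify the two bracket coefficients by brute force. First I would abbreviate $X \times Y = (a,b,c)^T$ with $a = x_2 y_3 - x_3 y_2$, $b = x_3 y_1 - x_1 y_3$, $c = x_1 y_2 - x_2 y_1$. Then $P_\infty = \meet(\join(X,Y), l_\infty) = (X\times Y)\times l_\infty = (b,-a,0)^T$ and $L = (a,b,c)^T$, so the two coefficients in $M = [Y,P_\infty,L]X + [X,P_\infty,L]Y$ become concrete $3\times 3$ determinants $\det(X,P_\infty,L)$ and $\det(Y,P_\infty,L)$, which I would expand along their last rows.

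The key identities to isolate are $a x_1 + b x_2 = -x_3\, c$ and $a y_1 + b y_2 = -y_3\, c$; each follows by substituting the definitions of $a,b,c$ and cancelling the $x_1 x_2 y_3$‑type terms. Feeding these back into the cofactor expansions collapses the determinants to
\[
[X, P_\infty, L] = x_3\,(a^2 + b^2 + c^2), \qquad [Y, P_\infty, L] = y_3\,(a^2 + b^2 + c^2).
\]
Expanding $a^2 + b^2 + c^2 = \norm{X \times Y}^2$ term by term, one recognizes precisely the polynomial $C$ exhibited as the removable prefactor in \Cref{avoid:lem:algebra_mid_mu}. Hence $M = [Y,P_\infty,L]X + [X,P_\infty,L]Y = C\cdot(y_3 X + x_3 Y)$.

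To finish I would invoke the theorem on avoiding singularities above: since $C = \norm{X\times Y}^2$ is a continuous removable prefactor (vanishing only where $X$ and $Y$ coincide projectively), it may be cancelled in $\RP^2$, so $[M] = [\,y_3 X + x_3 Y\,]$ and $y_3 X + x_3 Y$ is the universal $C^0$‑continuation of the midpoint map. The only real obstacle is careful sign bookkeeping in the two determinant expansions and checking that the common factor that emerges is literally the $C$ of \Cref{avoid:lem:algebra_mid_mu} rather than a scalar multiple of it; this is a routine but slightly error‑prone computation, most cleanly done by recording the auxiliary identities $ax_1+bx_2=-x_3c$ and $ay_1+by_2=-y_3c$ before substituting.
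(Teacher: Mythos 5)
Your proposal is correct and takes essentially the same route as the paper: both reduce to the Plücker-$\mu$ formula of \Cref{avoid:thm:midpoint_pluecker} and verify the factorization $M_\mu = C \cdot (y_3 X + x_3 Y)$ with the prefactor $C$ of \Cref{avoid:lem:algebra_mid_mu}, which is then cancelled projectively. The only difference is that you carry out by hand (using $\langle X, X\times Y\rangle = \langle Y, X\times Y\rangle = 0$ to collapse the determinants and identify $C = \norm{X\times Y}^2$) the computation the paper delegates to a computer algebra system; your bracket evaluations $[X,P_\infty,L]=x_3\,\norm{X\times Y}^2$ and $[Y,P_\infty,L]=y_3\,\norm{X\times Y}^2$ check out.
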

\begin{proof}
    Using a computer algebra system we can show that the midpoint $M_\mu$ of \Cref{avoid:thm:midpoint_pluecker} can be written with $C$ of \Cref{avoid:lem:algebra_mid_mu}
   \[
       M_\mu = C \cdot (  y_3 X + x_3 Y ).
   \]
   Therefore the solutions are equivalent and the efficient formula has fewer singularities.
\end{proof}
\begin{lemma}[Removable Singulariy of \Cref{avoid:thm:midpoint_pluecker}]
    \Cref{avoid:thm:midpoint_pluecker} has a removable singularity with continuation $M = X = Y$ if $X = Y$, while \Cref{avoid:thm:eff_mid} is well defined.
\end{lemma}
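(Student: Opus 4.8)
The plan is to deduce the statement directly from the factorization established in the proof of \Cref{avoid:thm:eff_mid}. Write $M_\mu$ for the midpoint produced by \Cref{avoid:thm:midpoint_pluecker} and $\tilde M := y_3 X + x_3 Y$ for the efficient formula of \Cref{avoid:thm:eff_mid}. There we showed $M_\mu = C \cdot \tilde M$, where $C$ is the polynomial of \Cref{avoid:lem:algebra_mid_mu}; a short computation (Lagrange's identity) identifies $C$ with the Gram determinant $\norm{X}^2\norm{Y}^2 - \langle X, Y\rangle^2 = \norm{X \times Y}^2 = \norm{L}^2$, which for $X, Y \in \R^3$ vanishes exactly when $X$ and $Y$ are linearly dependent, in particular when $X = Y$. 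Hence at the configuration $X = Y$ the prefactor $C$ is $0$ and $M_\mu$ collapses to the zero vector, so $[M_\mu]$ is genuinely singular there: this is the singularity to be removed.

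Next I would check that the cofactor $\tilde M$ does not vanish near that configuration. Fix a finite point $Z = (z_1, z_2, z_3)$ with $z_3 \neq 0$ and set $X = Y = Z$; then $\tilde M = z_3 Z + z_3 Z = 2 z_3 Z \neq 0$. By continuity of $(X,Y) \mapsto y_3 X + x_3 Y$ there is an open neighbourhood $\V \subset \RP^2 \times \RP^2$ of $([Z],[Z])$ on which $\tilde M$ never vanishes. On $\V$ the identity $M_\mu = C \cdot \tilde M$ displays $C$ as a \emph{removable prefactor} of $M_\mu$ in the sense of the corresponding definition ($\tilde M$ continuous and non-vanishing, $C$ continuous), and $C$ vanishes at $([Z],[Z]) \in C^{-1}(0)\cap\V$.

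Now the theorem \emph{Avoiding Singularities} applies: $[M_\mu]$, defined on $\V' := \{v \in \V : M_\mu(v) \neq 0\}$, has the universal $C^0$-continuation $[\tilde M]$ on $\V' \cup (C^{-1}(0)\cap\V)$, and this set contains the diagonal configuration $X = Y$. Its value there is $[\tilde M(Z,Z)] = [2 z_3 Z] = [Z]$, i.e. $M = X = Y$; restricting the universal continuation to any continuous path through that configuration yields a $C^0$-continuation in the sense of \Cref{sing_basics:topo_c0_conti}, so the singularity of \Cref{avoid:thm:midpoint_pluecker} at $X = Y$ is removable with exactly the claimed value. For the second half of the statement, note that \Cref{avoid:thm:eff_mid} \emph{is} this cofactor, $M = y_3 X + x_3 Y$, a polynomial map in the homogeneous coordinates that on $\V$ is continuous and non-vanishing; at $X = Y = Z$ it equals $2 z_3 Z$, a valid representative of $[Z]$, so it is well defined precisely where $M_\mu$ degenerates.

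The part requiring the most care is the \emph{localization}: $\tilde M$ is not globally non-vanishing on $\RP^2 \times \RP^2$ (it fails, for instance, when $y_3 X + x_3 Y = 0$, which occurs for suitable pairs of points at infinity), so one cannot invoke \emph{Avoiding Singularities} on the whole configuration space but must first pass to a neighbourhood of the diagonal made up of finite points, where $\tilde M(Z,Z) = 2 z_3 Z \neq 0$; one should also keep track of the hypothesis $z_3 \neq 0$, since the ``Euclidean midpoint'' — and the non-vanishing of $\tilde M$ at $X = Y$ — genuinely uses it. Beyond that, the proof is the verification $C = \norm{X \times Y}^2$ together with the single substitution $X = Y = Z$.
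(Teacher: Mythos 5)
Your proposal is correct, and its skeleton is the one the paper uses: exploit the factorization $M_\mu = C\cdot(y_3X + x_3Y)$ from the proof of \Cref{avoid:thm:eff_mid}, note that the prefactor $C$ vanishes at $X=Y$ while the cofactor does not, and read off the continuation. The differences are in how the two halves are justified. The paper argues $C=0$ geometrically --- $\join(X,Y)=0$ forces $P_\infty=0$, hence $M_\mu=0$ --- and dismisses the well-definedness of \Cref{avoid:thm:eff_mid} with ``easy to see'', leaving the continuation value $M=X=Y$ implicit. You instead identify $C=\norm{X\times Y}^2=\norm{L}^2$ via Lagrange's identity (which checks out against \Cref{avoid:lem:algebra_mid_mu} and shows $C$ vanishes exactly on the diagonal over $\R$), localize to a neighbourhood of a finite diagonal point where $y_3X+x_3Y\neq 0$, and then invoke the Avoiding Singularities theorem to produce the universal $C^0$-continuation $[y_3X+x_3Y]$ with value $[2z_3Z]=[Z]$, i.e.\ $M=X=Y$. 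This buys two things the paper's two-line proof glosses over: an explicit derivation of the claimed continuation value rather than an appeal to plausibility, and the observation that the argument genuinely needs $z_3\neq 0$ (finite points), since the cofactor can vanish for points at infinity and the Euclidean midpoint is only meaningful for finite $X,Y$. Both routes are valid; yours is the more complete one.
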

\begin{proof}
    Since $X = Y$ the operation $\join(X,Y)$ will yield the zero vector. Because the definition of $M$ contains $P_\infty = \meet(\join(X , Y), l_\infty)$ it is also the zero vector. Thus, the $C$ of \Cref{avoid:lem:algebra_mid_mu} is attaining $0$ for $X=Y$.

    It is easy to see that \Cref{avoid:thm:eff_mid} is well defined for $X=Y$.
\end{proof}
\subsection{Limits of a Priori Resolution}
While it is wise to scrutinize the formulas of geometric objects for superfluous pre factors this is unfortunately not always possible. We will give a construction that has a singularity which cannot be resolved a priori. 
\begin{theorem}[No a Priori Resolution]
    There is a geometric construction whose singularities cannot be resolved a priori.
\end{theorem}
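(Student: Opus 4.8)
The plan is to use the ``unification of circles'' of \Cref{nsa_pg:expl:univ_circles} as a witness and to show that, although its singularity can be removed along every fixed path, it admits no removable prefactor. Fix a nondegenerate circle $C$ and let $\V$ be an open neighbourhood of the point $[C]$ inside the $\CP^{3}$ of circles. Let $f\colon\V\to\C^{3}$ be the radical-axis map: writing circles as $x^{2}+y^{2}+Dx+Ey+F=0$ and $x^{2}+y^{2}+D'x+E'y+F'=0$, put $f=(D-D',\,E-E',\,F-F')^{T}$, which on $\V\setminus\{[C]\}$ is a homogeneous coordinate vector of the line through the two intersection points of $C$ and $D$ other than $\II$ and $\JJ$. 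The square-root-based intersection algorithm of \cite{richter2011perspectives} produces the same projective line times a scalar unit, so $f$ already represents the outcome of the best possible a priori simplification of the construction (the favourable counterpart being the passage from \Cref{avoid:thm:midpoint_pluecker} to \Cref{avoid:thm:eff_mid}). This $f$ is continuous on $\V$, and $f^{-1}(0)=\{[C]\}$, so $[C]$ is a singularity. First I would recall from \Cref{nsa_pg:expl:univ_circles} that restricting $f$ to any fixed straight family of circle centres through the centre of $C$ yields a singularity that \Cref{algo:c0-conti} resolves (justified by \Cref{nsa_prefac:shadow_cont_abs}), with continuation the line through that centre orthogonal to the chosen family; so the construction genuinely has a resolvable singularity, which is what makes the example nontrivial rather than hopeless.

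Second, I would derive a contradiction from the existence of a removable prefactor. Suppose $f=\gamma\cdot\tilde f$ with $\tilde f\colon\V\to\C^{3}\Smz$ continuous and $\gamma\colon\V\to\C$. Since $\tilde f$ is nowhere zero, $f^{-1}(0)=\gamma^{-1}(0)=\{[C]\}$, hence $\V'=\{f\neq0\}$ satisfies $\V'\cup\gamma^{-1}(0)=\V$, and the ``Avoiding Singularities'' theorem yields a continuous $[\hat f]\colon\V\to\CP^{2}$ with $[\hat f]=[f]$ on $\V'$. In particular $[\hat f]$ is continuous at $[C]$, so $\lim_{D\to C}[f(D)]$ exists and does not depend on how $D\to C$. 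But if $D_{\varepsilon}$ is the circle whose coefficients are those of $C$ shifted by $\varepsilon\,(a,b,c)$, then $f(D_{\varepsilon})=-\varepsilon\,(a,b,c)^{T}$, so $[f(D_{\varepsilon})]=[(a,b,c)^{T}]$ for every $\varepsilon\neq0$; two non-proportional choices of $(a,b,c)$ thus give families approaching $[C]$ along which $[f]$ is two different constants, so the limit of $[f]$ at $[C]$ does not exist. This is exactly the instability pictured in \Cref{nsa_pg:univ_unstable}. Hence $[f]$ has no continuous extension to $[C]$, contradicting $[\hat f]$; so $f$ has no removable prefactor, and the singularity of this construction cannot be resolved a priori.

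The step I expect to be the main obstacle is not the instability computation, which is immediate once coordinates are chosen, but the clean identification of the user-level construction with the polynomial radical-axis map $f$: one has to check that every algebraically reasonable reformulation of ``join the two non-$\II,\JJ$ intersection points of two circles'' agrees with $f$ projectively on $\V\setminus\{[C]\}$, so that no a priori trick can do better than $f$ and the diagonal singularity $[C]$ is an irreducible remnant. Granting that, the rest is formal: it is the contrapositive of the ``Avoiding Singularities'' theorem applied to the non-existence of a universal $C^{0}$-continuation, combined with the path-wise resolvability already recorded in \Cref{nsa_pg:expl:univ_circles}. It is also worth noting, for contrast, that the tangential disjoint-circle singularity of \Cref{sing:expl_sqrt} is \emph{not} a valid witness here: there the extraneous $\sqrt{1-t^{2}}$ factor is precisely a removable prefactor, and the radical-axis form is singularity-free at that configuration, so choosing the right construction is part of the proof.
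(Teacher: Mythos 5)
Your proposal is correct in substance, but it proves the theorem by a genuinely different route than the paper. The paper's witness is the line through $P=\meet(l,g)$ perpendicular to $l$: it writes out the output vector $l'$ explicitly and certifies with a computer algebra system that the gcd of its components is $1$, so no common polynomial factor can be cancelled, while the singularity at $l=g$ persists (and, as the figure caption notes, the limit there depends on how $l\to g$). You instead take the circle-unification example of \Cref{nsa_pg:expl:univ_circles}, model it by the radical-axis map $f=(D-D',E-E',F-F')^T$, and exclude \emph{any} removable prefactor by a topological obstruction: a factorization $f=\gamma\cdot\tilde f$ with $\tilde f$ continuous and non-vanishing would force $[f]$ to extend continuously to the coincident-circle point, yet your directional families $D_\varepsilon$ show the projective limit there does not exist. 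This buys something the paper's written argument does not: it rules out arbitrary continuous prefactors in the sense of the paper's definition, not merely polynomial common factors, and it needs no CAS computation. The price is the identification step you flag as the main obstacle, but it can be closed more cheaply than by comparing all "reasonable reformulations": whatever continuous $g$ models the user-level construction, the join of two \emph{distinct} finite intersection points of two distinct circles is the radical axis, so $[g]=[f]$ on the open set of nearby transversally intersecting configurations; choosing your families $D_\varepsilon$ inside that set (e.g.\ translating the centre of the second circle in two different directions) makes the two constant limits contradict the continuity of $[\tilde g]$ directly, with no appeal to $f$ being the "best possible a priori simplification". (Your phrase that the square-root-based algorithm differs from $f$ by a "scalar unit" is slightly loose — that factor vanishes at tangency — but it is non-zero on the transversal locus, which is all the argument uses.) Your closing contrast with the tangential case of \Cref{sing:expl_sqrt}, where the radical factor \emph{is} a removable prefactor, is accurate and mirrors the paper's own distinction between a priori removable and genuinely direction-dependent singularities.
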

\begin{proof}
    By example: take two finite lines $l = (l_1, l_2, l_3)^T$ and $g = (g_1, g_2, g_3)^T$. Denote their point of intersection by $P = \meet(l,g)$, this operation is always well defined if $l \not = g$. A simple calculation shows that the far point $F$ of all lines perpendicular to $l$ is given by $F = (l_1, l_2, 0)^T$, which is always well defined since we assumed $l \neq l_\infty$. Then the connecting line $l'$ of $F$ and $P$ is algebraically given by
    \[
l' =        \left(- l_{2}^{2} g_{1} + l_{1} l_{2} g_{2},\,l_{1} l_{2} g_{1} - 
l_{1}^{2} g_{2},\,l_{1} l_{3} g_{1} + l_{2} l_{3} g_{2} -  l_{1}^{2}
g_{3} -  l_{2}^{2} g_{3}\right).
    \]
    Using a computer algebra system one can verify that the greatest common divisor of the components of $l'$ is $1$ and therefore there is no removable prefactor which could be canceled out in a priori.

    Nevertheless the operation $\meet(l,g)$ is undefined if $l=g$, then $l' = \join(F,P)$ is also undefined and so the operation has a singularity for $l=g$ that cannot be removed a priori. 

\end{proof}
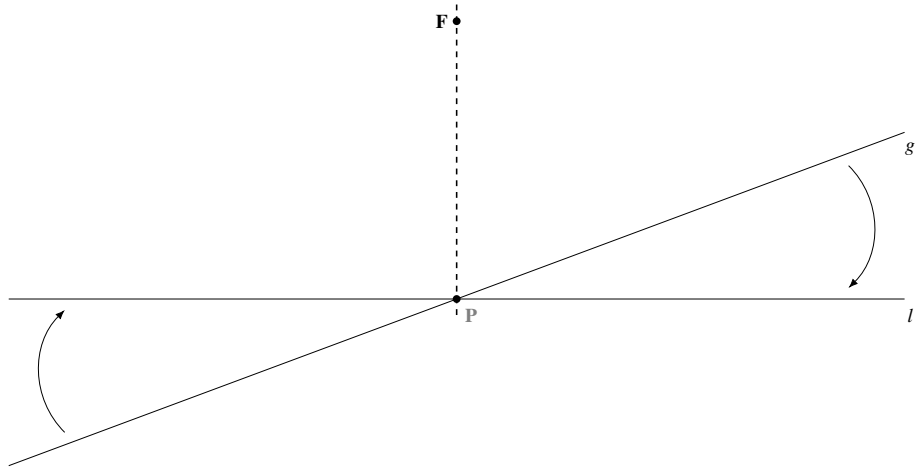
\begin{figure}
    \centering
      \usetikzlibrary{arrows.meta}
 \resizebox{0.9\linewidth}{!}{
\begin{tikzpicture}
 \draw (-8,0) -- (8,0);

 \node at (8,0) [right= 1mm, below = 0.5mm] {${l}$};

 \draw (-8,-3) -- (8,3);
 \node at (8,3) [right= 1mm, below = 0.5mm] {${g}$};

 \coordinate (z) at (0,0);
 \fill (z) circle [radius=2pt,  opacity=0.1];
 \node at (z) [below = 3mm, right = 0.3mm,  opacity=0.5] {$\mathbf{P}$};

 \coordinate (F) at (0,5);
 \fill (F) circle [radius=2pt];
 \node at (F) [above = 0mm, left = 0.3mm] {$\mathbf{F}$};

\draw [thick, dashed, shorten <=-0.3cm, shorten >=-0.3cm ] (F) -- (z);

\draw[-Latex] (7,2.4) to[out=-45,in=45] (7,0.2);
\draw[-Latex] (-7,-2.4) to[out=135,in=-135] (-7,-0.2);

\end{tikzpicture} 
}
\caption{\textbf{No a priori resolution:} the singularity in the construction if $l = g$ is not a priori removable. It depends on the position of $P$ which is determined by the position of $l$ and $g$. $P$ is not fixed, it may vary heavily on the movement of $l$ and $g$, if $P$ would be fixed, then an a priori resolution would be possible.}
\end{figure}

\bibliographystyle{elsarticle-harv}
\bibliography{main}{}
\end{document}